\DeclareMathOperator{\card}{card}
\newcommand\E   	{\mathcal{E}}
\newcommand\F   	{\mathcal{F}}
\newcommand\Hom		{\mathrm{Hom}}
\newcommand\Sub		{\mathrm{Sub}}
\newcommand\dSet	{\text{dSet}}
\newcommand\Set     {\text{Set}}
\newcommand\sSet    {\text{sSet}}
\newcommand\Oper    {\text{Oper}}
\newcommand{\ds}{\displaystyle}
\def\to{\rightarrow}
\newtheorem{theorem}{Theorem}[section]
\newtheorem{lemma}[theorem]{Lemma}
\newtheorem{prop}[theorem]{Proposition}
\newtheorem{cor}[theorem]{Corollary}
\theoremstyle{definition}
\newtheorem{definition}[theorem]{Definition}
\newtheorem{example}[theorem]{Example}
\theoremstyle{remark}
\newtheorem{remark}[theorem]{Remark}
\numberwithin{equation}{section}
\begin{document}

\title[On the combinatorics of anodyne extensions of dendroidal sets]{On the combinatorics of faces of trees and \\ anodyne extensions of dendroidal sets}


\author[M. Ba\v{s}i\'{c}]{Matija Ba\v{s}i\'{c}}
\address{Department of Mathematics, Faculty of Science, University of Zagreb, Croatia}
\email{mbasic@math.hr}

\subjclass[2000]{Primary: 55U05. Secondary: 55P48, 18G30.}

\keywords{dendroidal sets, anodyne extensions, pushout-product property}

\date{}

\dedicatory{}

\begin{abstract}
We discuss the combinatorics of faces of trees in the context of dendroidal sets and develop a systematic treatment of dendroidal anodyne extensions. As the main example and our motivation, we prove the pushout-product property for the stable model structure on dendroidal sets.
\end{abstract}

\maketitle

\section*{Introduction}

Combinatorial aspects of simplicial homotopy theory are governed by the simplicial identities for face and degeneracy maps. The main objects of study, the Kan complexes, are simplicial sets having the right lifting property with respect to horn inclusions, where a horn $\Lambda^k[n]$ of a simplex $\Delta[n]$ is a union of all but one of its faces. Simplicial anodyne extensions were first introduced in \cite{GZ} as a saturated class of monomorphisms of simplicial sets generated by the horn inclusions. In particular it is shown that the same class is generated by a set of maps 
\[
\Lambda^k [n]   \times \Delta[m] \cup \Delta[n]\times \partial \Delta[m]  \to \Delta[n]\times \Delta[m]. 
\]
This property, sometimes called the pushout-product property, 
simplifies combinatorial arguments involving lifting properties and it is reflected in the existence of a Quillen model structure (or, in other words, of "a homotopy theory") on simplicial sets with anodyne extensions as acyclic cofibrations and Kan complexes as fibrant objects. 

In recent years, the theory of simplicial sets has been extended to a richer theory of dendroidal sets (\cite{den set}, \cite{inn Kan in dSet}). By considering linear orders as linear trees, one extends the simplex category $\Delta$ to a larger category $\Omega$ of all finite rooted trees. Dendroidal sets are presheaves on the category $\Omega$ and the theory is developed in a similar fashion as that of simplicial sets. In a series of papers (\cite{dSet model hom op}, \cite{den Seg sp}, \cite{dSet and simp ope})  D.-C. Cisinski and I. Moerdijk show that the category of dendroidal sets is endowed with a Quillen model structure such that the fibrant objects are exactly the $\infty$-operads (the operadic analogues of $\infty$-categories). They also show that this model structure is Quillen equivalent to the model structure on simplicial operads (generalizing the equivalence between the Joyal model structure on simplicial sets and Bergner model structure on simplicial categories). We will refer to this Quillen structure on dendroidal sets as the operadic model structure.

Further research (\cite{Heuts}, \cite{Heuts2}, \cite{BM}) shows that the operadic model structure admits a left Bousfield localization, called the covariant model structure, which is Quillen equivalent to $E_\infty$--spaces. Moreover, joint work with T. Nikolaus (\cite{stable}) shows that there is a further Bousfield localization, called the stable model structure, by which dendroidal sets model grouplike $E_\infty$--spaces (or, equivalently, connective spectra). These model structures generalize the mentioned model structure on simplicial sets. Similarly to simplices, dendrices (tree-like cells) have faces and horns, and hence there are natural notions of dendroidal Kan complexes and of dendroidal anodyne extensions. Nonetheless, the combinatorial arguments needed to establish model structures in the dendroidal settting are more intricate and the generalizations are rarely direct. 

In this paper we advance in the study of the combinatorial aspects of (operadic, covariant and stable) dendroidal anodyne extensions. In particular, for a tree $T$ and a subset $A$ of a representable dendroidal set $\Omega[T]$, we study the set of face maps $\partial_f \colon \partial_f P \to P$ (with $P$ a subtree of $T$) which do not factor through $A$. Such a set of face maps is called an extension set for $A$ if it satisfies five easy-to-check conditions (see Definition \ref{def extension set}). Our main result, Theorem \ref{theorem method}, roughly states: 

\noindent \textbf{Theorem.} \emph{If there is an extension set for $A$, then the inclusion  $A \to \Omega[T]$
	is a dendroidal anodyne extension. }\\

We consider this to give a combinatorial technique that simplifies various proofs in the theory. We show how this technique immediately applies to obtain some already known results (e.g. Example \ref{SegalCore}, 
Remark \ref{PushoutProductOper}) and we also apply it to obtain new results. Our main new result is a variant of the pushout-product property for the stable model structure (see Theorem \ref{PushoutProductStab}):  

\noindent \textbf{Theorem.} Let $S$ and $T$ be trees and let $v$ be the bottom vertex of $S$. If $S$ or $T$ is linear or both $S$ and $T$ are open trees, then the morphism
$$\Lambda^{v}[S] \otimes \Omega[T] \cup \Omega[S] \otimes \partial\Omega[T] \to \Omega[S]\otimes \Omega[T]$$ is a stable anodyne extension. \\

This result has particular importance (which is our main motivation) as it enables us to construct the stable model structure in a more direct way than it was done in \cite{stable}, without referring to the covariant model structure.  A considerable advantage of giving an alternative construction of the stable model structure is that it provides a direct characterization of fibrations between fibrant objects, which we did not know how to show without the result of Theorem \ref{PushoutProductStab}. Also, the case where both maps are in the category of open dendroidal sets shows that the corresponding model structure on open dendroidal sets is compatible with the colax monoidal structure.

This paper is based on one chapter of the author's PhD thesis (\cite{thesis}). The material has been significantly rewritten in order to simplify the presentation and make the combinatorial conditions more natural, but this has not changed the underlying content and the main results as stated in the chapter of the thesis. Furthermore, the establishment of the stable model structure on dendroidal sets follows from the main results of the paper by standard techniques as written in the mentioned thesis. 

\subsection*{Organization.} After recalling the definition of trees and basic results about dendroidal sets in Section 1, we discuss the poset of faces of a tree and dendroidal identities in Section 2. In Section 3 we explain our combinatorial method. We axiomatize sufficient conditions for an inclusion of dendroidal sets to be contained in the saturated class generated by horn inclusions. In Section 4 we first recall the Boardman--Vogt tensor product of trees and finally prove the pushout-product property for the stable model structure on dendroidal sets. 

\subsection*{Acknowledgements.}  Many thanks are due to Ieke Moerdijk for encouraging me to do this project, as well as for many discussions, comments on the proofs and uncountable advice on how to improve my writing. I would also like to thank Thomas Nikolaus for helpful discussions and comments on the draft. Two subtleties have been corrected in the final version in the proof of Lemma \ref{lemmaAxioms345} and in Remark \ref{PushoutProductOper} after being pointed out to me by Luis A. Pereira. I am grateful to thorough referees for many comments that have improved the text.

\section{The formalism of trees and dendroidal sets}

\subsection{Definition of a tree}

\begin{definition} A \emph{tree} is a triple $(T, \leq, L)$ consisting of a finite non-empty set $T$, a partial order $\leq$ on $T$ and a subset $L$ of maximal elements of $T$ such that
\begin{itemize}
\item there is a unique minimal element $r\in T$, called the \emph{root} of $T$;
\item for every $e\in T$, the order $\leq$ induces a total order on the set $\{t \in T \mid t \leq e\}$, called the \emph{branch from $e$ to the root}.
\end{itemize}
\end{definition}

We usually denote such a triple $(T, \leq, L)$ simply by $T$. Elements of $T$ are called \emph{edges}. The elements of the set $L$ are called \emph{leaves}. \emph{Inner edges} are edges other than the root and the leaves. We define the \emph{height} of an edge $e$ as the number of elements of the branch from $e$ to the root.

For an edge $e$ which is not a leaf, the set $v$ of all of its immediate successors is called a \emph{vertex}. We say that $e$ is the \emph{output} of $v$. Elements of a vertex are also called \emph{inputs} of $v$. We say that an edge $e$ is \emph{attached} to a vertex $v$ if $e$ is the output or an input of $v$. A \emph{sibling} of an edge $e$ is any other edge $f$ such that $e$ and $f$ are both inputs of the same vertex. The unique vertex whose output is the root is called the \emph{bottom vertex}. We say that a vertex is a \emph{top vertex} if all of its inputs are leaves. 	An \emph{outer vertex} is either a top or a bottom vertex.  A top vertex may be empty and then it is called a \emph{stump}. Note that the union of the set of leaves and the set of stumps is in bijection with the set of maximal elements of $T$. A tree with no stumps is called an \emph{open} tree. 

A tree with exactly one vertex is called a \emph{corolla} and denoted $C_n$ where $n$ is the number of leaves. A tree all of whose vertices have exactly one input is called a \emph{linear tree} and denoted $L_n$ where $n\geq 0$ is the number of vertices. A tree with no vertices is called the \emph{unit tree} and it is denoted by $L_0$. 

To draw a tree on a paper we must put a total order on the inputs of every vertex. This gives additional structure to the tree called a \emph{planar structure}.

\begin{example} \label{tree}
Here is a picture of a (planar) tree with a root $r$, the set of leaves $L=\{a,b,d,f\}$, inner edges $c,e$, a stump $u$, another top vertex $w=\{a,b\}$ and a bottom vertex $v=\{c,d,e,f\}$.
\[   \xymatrix{
&&& _{a}&&_{b} &&&& \\
& *=0{\,\, \bullet^u} & _d && *=0{\,\,\, \bullet_w} \ar@{-}[ul]  \ar@{-}[ur] &_f&& \\
&&&*=0{\,\, \bullet_v} \ar@{-}[ull]^c \ar@{-}[ul] \ar@{-}[ur]^e \ar@{-}[urr]    &&&&\\
&&&*=0{}\ar@{-}[u]^r &&&&
}
\]
\end{example}

\begin{definition} \label{grafting}
Let $S$ be a tree with set of leaves $L(S)=\{l_1,...,l_m\}$. Let $T_1,...,T_m$ be trees with pairwise disjoint underlying sets such that for every index $i\in\{1,...,m\}$ the root $l_i$ of $T_i$ is the only common element of $S$ and $T_i$.

We define a new tree $S\circ (T_1,...,T_m)$  such that
\begin{itemize}
\item the underlying set is the union $S\cup T_1 \cup ... \cup T_m$,
\item the partial order extends the partial orders of $S, T_1,...,T_m$ in the sense that $t \leq s$ for all $s\in S$ such that $l_i\leq s$ and all $t\in T_i, i=1,...,m$
\item the set of leaves is $L(T_1) \cup ...\cup L(T_m)$.
\end{itemize}
We say that we have obtained $S\circ(T_1,...,T_m)$ by \emph{grafting} the trees $T_1,...,T_m$ on top of $S$.
\end{definition}

\subsection{Operads associated with trees and the category $\Omega$}

\begin{definition}
Let $(T, \leq, L)$ be a tree, $n\geq 0$ an integer and $t_1,...,t_n, t$ elements of $T$ such that $t \leq t_i$ for $i=1,...,n$. A pair $(\{t_1,...,t_n\}; t)$ is an \emph{operation} of $T$ if 
\begin{itemize}
\item for every leaf $t \leq l$ there exists a unique $i\in \{1,2,...,n\}$ such that $t_i\leq l$;
\item for every stump $v$ with an output edge $t\leq e$ there exists at most one $i\in \{1,2,...,n\}$ such that $t_i\leq e$.
\end{itemize}
We also write $(t_1,...,t_n; t)$ for such an operation.
\end{definition}

\begin{remark}
Intuitively, an operation $(t_1,...,t_n; t)$ can be also thought of as a (connected) subtree of $T$ with leaves $t_1, \ldots, t_n$ and the root $t$. 
\end{remark}

\begin{example}
Let $v$ be a vertex of a tree $T$ with an output edge $e$. Then $(v,e)$ is an operation of $T$.
\end{example}

\begin{example} The tree  
\begin{equation*}
\xymatrix@R=10pt@C=12pt{
&&&&&&&& \\
&*=0{  \bullet} \ar@{-}[ul]  \ar@{-}[ur]  &&&& *=0{ \bullet} \ar@{-}[ul]_d  \ar@{-}[ur]^e   & *=0{\bullet}  && \\
&& *=0{ \bullet} \ar@{-}[ul]^a  \ar@{-}[ur]^b && *=0{ \bullet} \ar@{-}[ul]_c  \ar@{-}[ur] && *=0{ \bullet} \ar@{-}[u]_f    \\
&&&&*=0{ \bullet}  \ar@{-}[ull] \ar@{-}[u] \ar@{-}[urr]    &&\\
&&&&*=0{}\ar@{-}[u]_{r} &&
}
\end{equation*}
has an operation $(a,b,c,d,e,f;r)$, an operation $(a,b,c,d,e; r)$ and many others.
\end{example}

\begin{lemma} \label{operations_lemma}
Let $T$ be a tree.
\begin{enumerate}
\item[(a)] For every $t\in T$, $(t;t)$ is an operation of $T$.
\item[(b)] If $(t_1,...,t_n;t)$ and $(t_{i,1},...,t_{i,k_i};t_i)$ for $i\in\{1,...,n\}$ are operations of $T$, then  $(t_{1,1},...,t_{1,k_1}, t_{2,1},..., t_{n,k_n};t)$ is also an operation of $T$.
\item[(c)] If $(t_1,...,t_n; t)$ is an operation of $T$ then $(t_{\sigma(1)}, t_{\sigma(2)}, ...., t_{\sigma(n)}; t)$ is also an operation of $T$ for any permutation $\sigma\in \Sigma_n$.
\end{enumerate}
\end{lemma}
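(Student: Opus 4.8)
The plan is to check the defining conditions of an operation by hand in each case, using nothing beyond transitivity of the partial order and the corresponding conditions for the operations already given. Part (c) is immediate: the two clauses in the definition of an operation mention only the \emph{set} $\{t_1,\dots,t_n\}$ and the edge $t$, so reindexing by $\sigma\in\Sigma_n$ leaves them untouched. Part (a) is an equally direct verification with $n=1$ and $t_1=t$: the relation $t\leq t$ is trivial, every leaf $l\geq t$ has $t_1=t\leq l$ as its unique witness, and every stump with output $e\geq t$ again has $t_1=t\leq e$ as its only possible witness, so both clauses hold.

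For part (b) I would set up the one elementary observation that drives everything: if $t_{i,j}\leq x$ for some edge $x$, then $t_i\leq x$ (since $t_i\leq t_{i,j}$). First, $t\leq t_{i,j}$ follows from $t\leq t_i\leq t_{i,j}$. For the leaf clause, given a leaf $l\geq t$, the operation $(t_1,\dots,t_n;t)$ supplies a unique $i$ with $t_i\leq l$, and then---because $l$ is a leaf lying above $t_i$---the operation $(t_{i,1},\dots,t_{i,k_i};t_i)$ supplies a unique $j$ with $t_{i,j}\leq l$; uniqueness of the pair $(i,j)$ then follows by first reducing any competitor $(i',j')$ to the statement $t_{i'}\leq l$ via the observation, forcing $i'=i$, and then forcing $j'=j$. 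For the stump clause, given a stump with output $e\geq t$, any two witnesses $(i,j)$ and $(i',j')$ give $t_i\leq e$ and $t_{i'}\leq e$ by the observation, whence $i=i'$ from the ``at most one'' clause for $(t_1,\dots,t_n;t)$; and then $t_{i,j}\leq e$, $t_{i,j'}\leq e$ together with $t_i\leq e$ force $j=j'$ from the ``at most one'' clause for $(t_{i,1},\dots,t_{i,k_i};t_i)$.

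I expect no real obstacle here---the lemma is essentially a bookkeeping exercise---so the only thing that needs genuine care is, each time one of the hypothesised operations is invoked, verifying that the edge being tested ($l$ or $e$) lies above the correct root: above $t_i$, not merely above $t$. That is exactly the content of the observation $t_{i,j}\leq x\Rightarrow t_i\leq x$, so once it is recorded the rest is automatic.
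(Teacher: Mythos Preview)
Your proof is correct and follows exactly the approach the paper intends: a direct verification from the definition of an operation. In fact the paper's own proof consists of the single sentence ``All statements follow directly from the definition and their verification is left to the reader,'' so you have simply carried out that verification in full.
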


\begin{proof} All statements follow directly from the definition and their verification is left to the reader.
\end{proof}

\begin{definition} \label{X-vertex}
	Let $(X; r_T)$ be an operation in $T$. We call a vertex $w$ of $T$ an $X$--\emph{vertex} if
	\begin{itemize}
		\item $w$ is non-empty and all inputs of $w$ are elements of $X$ or
		\item $w$ is empty and for its output $y$ there is no $x\in X$ such that $x\leq y$.
	\end{itemize}
\end{definition}

\begin{definition}
To every tree $T$ we associate a coloured operad $\Omega(T)$ with a set of colours being $T$ and
 \begin{equation*}
 \Omega(T)(t_1,...,t_n; t)=
 \left\{\begin{array}{ll}
 *, & \textrm{ if } (t_1,...,t_n; t) \textrm{ is an operation of } T;    \\
 \emptyset, & \textrm{ otherwise,}
 \end{array} \right.
 \end{equation*}
where $*$ denotes a fixed singleton.  The structure maps are uniquely determined and Lemma \ref{operations_lemma} shows they are well-defined.
\end{definition}

\begin{remark}
In the literature, $\Omega(T)$ is described equivalently as the free operad generated by the vertices of $T$.
\end{remark}

\begin{lemma} \label{operations_map}
Let $S$ and $T$ be trees. A map of sets $f:S\to T$ extends to a morphism of operads $f: \Omega(S) \to \Omega(T)$ if and only if $(f(s_1),...,f(s_n); f(s))$ is an operation in $T$ for every operation $(s_1,...,s_n; s)$ in $S$.
\end{lemma}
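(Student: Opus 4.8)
The plan is to unwind the definitions and exploit the fact that every operation space $\Omega(T)(t_1,\dots,t_n;t)$ is a subsingleton (either $\emptyset$ or $*$): this makes all of the operadic structure, and its preservation, essentially forced. For the ``only if'' direction, suppose $f$ underlies a morphism of operads $\varphi\colon\Omega(S)\to\Omega(T)$. Then for each tuple $(s_1,\dots,s_n;s)$ of edges of $S$ with $s\le s_i$ we obtain a function $\Omega(S)(s_1,\dots,s_n;s)\to\Omega(T)(f(s_1),\dots,f(s_n);f(s))$. If $(s_1,\dots,s_n;s)$ is an operation of $S$, the domain is the nonempty set $*$, so the codomain must be nonempty, which by the definition of $\Omega(T)$ says precisely that $(f(s_1),\dots,f(s_n);f(s))$ is an operation of $T$.

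For the converse, assume $f$ sends operations of $S$ to operations of $T$. I would take $\varphi=f$ on colours, and note that on operation spaces there is no choice: if $\Omega(S)(s_1,\dots,s_n;s)=\emptyset$, use the unique map out of $\emptyset$; if $(s_1,\dots,s_n;s)$ is an operation of $S$, then by hypothesis $(f(s_1),\dots,f(s_n);f(s))$ is an operation of $T$, so the codomain equals $*$ and we use the unique map $*\to *$. This is well-defined and is the only possibility, so it also yields uniqueness of the extension. It then remains to check that $\varphi$ respects units, symmetric group actions, and operadic composition. The key remark is again that every target space in $\Omega(T)$ is a subsingleton, so any two parallel functions into it that are defined agree; hence each coherence square commutes as soon as its relevant target space is nonempty whenever its source space is. For units, $(s;s)$ is an operation of $S$ by Lemma \ref{operations_lemma}(a), so $(f(s);f(s))$ is an operation of $T$ and $\varphi$ carries the unit $1_s\in\Omega(S)(s;s)$ to $1_{f(s)}$. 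For symmetric actions, Lemma \ref{operations_lemma}(c) shows that permuting the inputs of an operation (before or after applying $f$) again gives operations, so both legs of the equivariance square are maps of singletons. For composition, if the source of the composition map $\Omega(S)(s_1,\dots,s_n;s)\times\prod_i\Omega(S)(s_{i,1},\dots,s_{i,k_i};s_i)\to\Omega(S)(s_{1,1},\dots,s_{n,k_n};s)$ is nonempty, then each factor is an operation of $S$; by Lemma \ref{operations_lemma}(b) so is the composite, and applying $f$ together with the hypothesis makes all the corresponding operations of $T$ present, so the four corners of the naturality square are singletons and it commutes. Thus $\varphi$ is a morphism of operads.

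I do not expect a genuine obstacle here: the argument is bookkeeping once one isolates the two structural inputs, namely that operation spaces are subsingletons (so the structure maps of $\Omega(T)$, and their preservation by $\varphi$, are automatic once the target is known nonempty) and the closure properties recorded in Lemma \ref{operations_lemma} (which supply exactly that nonemptiness). The only point to be careful about is to phrase ``$\varphi$ is an operad morphism'' correctly, i.e. to check compatibility with \emph{all three} pieces of structure rather than just composition.
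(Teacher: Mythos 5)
Your proposal is correct and follows essentially the same route as the paper: both directions come down to the observation that each operation space of $\Omega(T)$ is a subsingleton, so the component maps are forced and compatibility with the structure maps is automatic, with Lemma \ref{operations_lemma} supplying the required nonemptiness. The paper's proof is just a terser version of the same bookkeeping.
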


\begin{proof}
A morphism of operads $f: \Omega(S) \to \Omega(T)$ consists of component maps $\Omega(S)(s_1,...,s_n;s) \to \Omega(T)(f(s_1),f(s_2),...,f(s_n); f(s))$ compatible with the structure maps. The component maps are either the unique maps $\emptyset \to *$ or identities on $\emptyset$ or identities on $*$. Compatibility follows directly since all structure maps are uniquely determined by their domains and codomains.
\end{proof}

\begin{definition}
The category $\Omega$ of trees is a category whose objects are trees and the morphism sets are given by
\begin{equation*}
\Hom_{\Omega}(S,T) = \Hom_\Oper (\Omega(S), \Omega(T)).
\end{equation*}
So, $\Omega$ is by definition a full subcategory of the category $\Oper$ of (coloured) operads. 
\end{definition}

The category $\Omega$ is not skeletal, and in contrast to the category of linear orders and weakly monotone maps (whose skeleton is usually denoted $\Delta$) there is no natural choice for the representatives of isomorphism classes of objects in $\Omega$. Nonetheless, given a monomorphism $f\colon S\to T$ in $\Omega$, the image of $f$ is a tree $T'\subseteq T$ and there is a unique isomorphism $S\to T'$ such that $f$ factors as 
\[
\xymatrix{S \ar^{\cong}[r] & T' \ar@{^{(}->}[r] & T.}
\]

\begin{definition}
	A monomorphism $f\colon S\to T$ is a \emph{simple face map} if $S$ is equal (and not just isomorphic!) to the image $f(S)$. If $T$ has exactly one vertex more than $S$, we say that $f$ is an \emph{elementary face map}. 	
\end{definition}

\begin{remark} Elementary face maps are explicitly described and their relations are studied in Section \ref{elementary faces}. Simple face maps are exactly the compositions of elementary face maps. 
	Similarly, epimorphisms $S\to T$ such that $S$ has exactly one vertex more than $T$ are called \emph{elementary degeneracy maps}. 
\end{remark}

\begin{lemma}[\cite{den set}, 3.1]
	Every morphism in $\Omega$ can be factored in a unique way as an epimorphism  followed by a simple face map. Every epimorphism can be factored as a composition of elementary degeneracy maps followed by an isomorphism. 
\end{lemma}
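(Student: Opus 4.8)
Both assertions rest on the concrete description of morphisms from Lemma~\ref{operations_map}: a morphism $f\colon S\to T$ in $\Omega$ is exactly a map of edge sets carrying each operation of $S$ to an operation of $T$, and in particular $f$ is determined by its effect on edges. I would first record two elementary facts. \emph{(i)} $f$ is monotone along branches: applied to the operation $(v;e)$ of a vertex $v$ with output $e$ it shows every input of $v$ maps above $f(e)$, and composing along a branch gives $f(d)\le f(d')$ whenever $d\le d'$ (so $f(r_S)$ is the least element of $f(S)$). \emph{(ii)} In any tree an operation $(t_1,\dots,t_n;t)$ with $n\ge 2$ has pairwise distinct entries, each strictly above $t$; this is a short case analysis from the leaf clause in the definition of an operation, falling back on the stump clause when every maximal edge above a putative repeated (or output-equal) entry is a stump. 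Combining \emph{(i)}, \emph{(ii)} and Lemma~\ref{operations_lemma}: if $v$ is a vertex of $S$ with at least two inputs then $f$ is injective on the inputs of $v$ and sends none of them to $f(\mathrm{out}\,v)$.

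\emph{First statement.} For existence, set $T':=f(S)\subseteq T$ with the order induced from $T$. Using \emph{(i)}--\emph{(ii)} one checks that $T'$ underlies a unique tree: its root is $f(r_S)$, branches are totally ordered since they are in $T$, and the division of the maximal edges of $T'$ into leaves and stump–outputs is forced by demanding that both the inclusion $\iota\colon T'\hookrightarrow T$ and the corestriction $\bar f\colon S\to T'$ preserve operations (roughly, a maximal edge of $T'$ is a leaf exactly when it is the image of a leaf of $S$). Then $\iota$ is by definition a simple face map, $\bar f$ is surjective on edges and hence an epimorphism (a morphism surjective on edges is epic, since morphisms are determined by their edge maps), and $f=\iota\circ\bar f$. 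For uniqueness, if $f=m\circ e=m'\circ e'$ with $m,m'$ simple face maps and $e,e'$ epimorphisms, then $m,m'$ are monomorphisms with image $f(S)$ and, a simple face map having domain literally equal to its image, $\mathrm{dom}\,m=f(S)=\mathrm{dom}\,m'$ and $m=m'$; cancelling the monomorphism $m$ gives $e=e'$.

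\emph{Second statement.} Recall (cf.\ \cite{den set}) that a morphism of $\Omega$ is an epimorphism iff it is surjective on edges. Given such a $g\colon S\to T$, I would induct on $\card S-\card T\ge 0$, where $\card$ denotes the number of edges. If it is $0$ then $g$ is bijective on edges, hence an isomorphism. If it is positive there are edges $a\ne b$ with $g(a)=g(b)$; by the consequence of \emph{(i)}--\emph{(ii)} they are not two inputs of a common vertex, and they are not incomparable either: by Lemma~\ref{operations_lemma}(b) one grafts elementary operations into the vertex at the meet of $a$ and $b$ to obtain an operation of $S$ having both $a$ and $b$ among its inputs, whose $g$–image would be an operation of $T$ with a repeated entry. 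So $a<b$ (say), and by \emph{(i)} the set $g^{-1}(g(a))$ is an interval of edges along a single branch of $S$; take two consecutive elements $a'<b'$ of it and let $v$ be the vertex with output $a'$ and input $b'$. If $v$ were non-unary, $b'$ would be an input of a vertex with at least two inputs sent to $g(a')=g(\mathrm{out}\,v)$, contradicting the consequence above; so $v$ is unary. Since $g(a')=g(b')$, $g$ factors as $g=\bar g\circ\sigma_v$ through the elementary degeneracy $\sigma_v\colon S\to S/v$ contracting $v$, with $\bar g$ again an epimorphism and $\card(S/v)-\card T=\card S-\card T-1$. By the inductive hypothesis $\bar g=\varphi\circ\sigma_{v_k}\circ\cdots\circ\sigma_{v_1}$ with $\varphi$ an isomorphism, whence $g=\varphi\circ\sigma_{v_k}\circ\cdots\circ\sigma_{v_1}\circ\sigma_v$.

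\textbf{Main obstacle.} The genuinely delicate step is the construction of the tree structure on the image $f(S)$ in the first statement — above all the leaf/stump assignment on its maximal edges — and the verification that $\iota$ and $\bar f$ are honest morphisms of $\Omega$; passing instead through the $(\text{epi},\text{mono})$ factorization of $\Oper$ only relocates the work, since one must still recognize the sub-operad produced as $\Omega(T')$ for a subtree $T'$. In the second statement the supporting facts — any two incomparable edges occur together among the inputs of some operation, a morphism bijective on edges is an isomorphism, and epimorphisms are surjective on edges — are the remaining points that need care, and they are exactly where the stump clause in the definition of an operation is used.
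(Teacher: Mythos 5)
The paper offers no proof of this lemma (it simply cites \cite{den set}, Lemma 3.1), so your argument has to stand on its own; its overall plan --- factor through the image subtree, then peel off unary degeneracies --- is the standard one, your facts \emph{(i)}--\emph{(ii)} are fine under the reading of operations that matches the free-operad description, and even your ``rough'' leaf/stump rule for the image is correct (a leaf of $S$ and the output of a nullary operation of $S$ can never have comparable images, so the assignment is forced and consistent), although you leave its verification as a sketch. The genuine gap is that stumps break the steps you actually rely on. Let $T$ be the tree with edges $r<e$, a unary vertex with output $r$, and a stump $w$ with output $e$. The top face $\partial_w\colon\partial_wT\to T$ deletes no edges, so it is \emph{bijective} on edges, yet it is not an isomorphism: the inverse edge map would have to send the nullary operation $(\,;e)$ of $T$ to an operation of $\partial_wT$, where $e$ is a leaf. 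Since morphisms of $\Omega$ are determined by their edge maps, $\partial_w$ is moreover an epimorphism and, by the paper's definition, a simple face map. This falsifies the base case of your induction for the second sentence (``bijective on edges, hence an isomorphism''), and it also defeats your uniqueness argument for the first: $\mathrm{id}_T\circ\partial_w$ and $\partial_w\circ\mathrm{id}_{\partial_wT}$ are two distinct (epimorphism, simple face) decompositions of $\partial_w$, and the step ``$m,m'$ have the same image, hence the same domain'' conflates the image \emph{edge set} with the image \emph{subtree}: the simple faces $\mathrm{id}_T$ and $\partial_w$ have the same edge set $\{r,e\}$ but different domains, differing exactly in the leaf/stump status of $e$.

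Consequently your (correct) characterization ``epimorphism iff surjective on edges'' cannot lead to the second sentence as you read it: $\partial_w$ is edge-surjective, hence a categorical epimorphism, but it is not a composite of elementary degeneracies and an isomorphism. The statement has to be understood as in \cite{den set}: every morphism factors, essentially uniquely, as a composite of elementary degeneracies, followed by an isomorphism, followed by elementary face maps, where the left-hand factor is by definition a composite of degeneracies and isomorphisms --- such composites are surjective on \emph{all} operations (in particular on nullary ones), which is precisely the property that $\partial_w$ lacks and that an edge-level description of epimorphisms cannot detect. With that reading, your induction is the right idea, but the base case must become: a map in this restricted class that is bijective on edges is an isomorphism, which again needs the nullary operations to be hit --- the point your argument skips. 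Note that if all trees are open the pathology disappears (every elementary face then strictly decreases the number of edges), and your proof is essentially sound as written; the entire remaining content of the lemma is the bookkeeping of stumps, both here and in the verification, which you flagged but did not carry out, that the corestriction to the image and the inclusion of the image are morphisms of $\Omega$.
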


\subsection{Dendroidal sets}

\begin{definition}
A \emph{dendroidal set} is a presheaf on the category $\Omega$. We denote the category of dendroidal sets by
$$\dSet := [\Omega^{\textrm{op}} , \Set].$$
\end{definition}

We denote by $\Omega[T]=\Hom_\Omega(-, T)$ the dendroidal set represented by a tree $T$ and by $\eta$ the representable $\Omega[L_0]$.

By the general arguments of left Kan extensions along the Yoneda embedding the inclusion $\Omega \to  \Oper$ induces an adjunction
\begin{equation*}
	\tau_d: \xymatrix{\dSet \ar@<0.3ex>[r] & \Oper: N_d \ar@<0.7ex>[l]}.
\end{equation*}
We call $N_d$ the \emph{dendroidal nerve functor} and for every coloured operad $P$ we have $N_d(P)_T = \Hom_{\Oper} (\Omega(T), P)$. Functor $N_d$ is fully faithful (as follows from \cite{MoerBar}, Proposition 7.1.4 and Proposition 7.3.7 and Proposition 7.3.8).

\begin{remark}
There is a fully faithful functor $i\colon \Delta \to \Omega$ sending the linear order $[n]$ to the linear tree $L_n$. It induces an adjunction
\begin{equation*}
i_!: \xymatrix{\sSet \ar@<0.3ex>[r] & \dSet: i^* \ar@<0.7ex>[l]}.
\end{equation*}
The functor $i_!$ is fully faithful. This and other good properties of this adjunction make dendroidal sets a generalization of simplicial sets.
\end{remark}

\begin{remark}
The inclusion of the full subcategory $\Omega^\circ$ on open trees into $\Omega$ also induces an embedding of the category of \emph{open dendroidal sets} (presheaves on $\Omega^\circ$) into the category of dendroidal sets. Where there is no danger of confusion we will consider open dendroidal sets as dendroidal sets. 
\end{remark}

\section{Elementary face maps} \label{elementary faces}

\subsection{Description of elementary face maps}

There are three types of elementary face maps : \emph{inner}, \emph{top} and \emph{bottom}. 

Let $e$ be an inner edge of a tree $T$. We define $\partial_e T$ to be the tree whose underlying set is $T\setminus \{e\}$, the partial order is induced from the one on $T$ and the set of leaves is the same as of $T$. There is an \emph{inner elementary face map} $\partial_e\colon \partial_e T \to T$ which is an inclusion of partially ordered sets. Note that if $e$ is an input of a vertex $v$ and the output of a vertex $w$, the tree $\partial_e T$ has a vertex $v\circ_e w := w \cup v \setminus \{e\}$ instead of $v$ and $w$. In terms of graphs, we obtain $\partial_e T$ by contracting the edge $e$:
\begin{equation*}
	\xymatrix@C=12pt{
		&&&&&\\
		_c & _d & _a &_b&_f& \\
		&&*=0{\quad \quad \bullet_{v \circ_e w}} \ar@{-}[ull] \ar@{-}[ul] \ar@{-}[u] \ar@{-}[ur] \ar@{-}[urr]   &&&\\
		&&*=0{}\ar@{-}[u]^r &&&
	}
	\xymatrix{ \\ \\ \longrightarrow \\ }
	\xymatrix@C=12pt{
		&& _{a}&&_{b} &&& \\
		 _c & _d && *=0{\,\,\, \bullet_w} \ar@{-}[ul]  \ar@{-}[ur] &_f& \\
		&&*=0{\,\,\, \bullet_v} \ar@{-}[ull] \ar@{-}[ul] \ar@{-}[ur]^e \ar@{-}[urr]    &&&\\
		&&*=0{}\ar@{-}[u]^r &&&
	}
\end{equation*}

Let $w$ be a top vertex of a tree $T$.  We define $\partial_w T$ to be the tree whose underlying set is $T\setminus w$, the partial order is induced from the one on $T$ and the set of leaves is obtained by deleting the inputs of $w$ and adding the output of $w$ to the set of leaves of $T$. There is a \emph{top elementary face map} $\partial_w\colon \partial_w T \to T$ which is an inclusion of partially ordered sets. Note that if $T$ is a corolla with the root $r$ there is a unique top elementary face map and $\partial_w T$ is the unit tree with the unique edge $r$. In terms of graphs, we chop off the vertex $w$ and its inputs:
\begin{equation*}
	\xymatrix@C=12pt{
		&&&&&& \\
		 _c & _d &&_e&_f \\
		&&*=0{\,\, \bullet_v} \ar@{-}[ull] \ar@{-}[ul] \ar@{-}[ur] \ar@{-}[urr]   &&\\
		&&*=0{}\ar@{-}[u]^r &&
	}
	\xymatrix{ \\ \\ \longrightarrow \\ }
	\xymatrix@C=12pt{
		&& _{a}&&_{b} && \\
		 _c & _d && *=0{\,\,\, \bullet_w} \ar@{-}[ul]  \ar@{-}[ur] &_f \\
		&&*=0{\,\,\, \bullet_v} \ar@{-}[ull] \ar@{-}[ul] \ar@{-}[ur]^e \ar@{-}[urr]    &&\\
		&&*=0{}\ar@{-}[u]^r &&
	}
\end{equation*}

Let $v$ be a bottom vertex of a tree $T$ and $e$ an input of $v$ such that all other inputs of $v$ are leaves. If $T$ has at least two vertices, then $e$ is an inner edge and it is a unique input of $v$. 
We define the tree $\partial_vT$ with the underlying set $\{t \in T: e \leq t\}$, the induced partial order from $T$ and the set of leaves obtained by deleting the siblings of $e$ from the set of leaves of $T$. There is a bottom elementary face map $\partial_v \colon \partial_v T \to T$ which is an inclusion of partially ordered sets. In terms of graphs, we chop off $v$ with the root and all inputs of $v$ different from $e$:
\begin{equation*}
	\xymatrix@C=12pt{
		& &&&&& \\
		& _a & &_b&  \\
		&&*=0{\,\,\, \bullet_w} \ar@{-}[ul]  \ar@{-}[ur]   &&\\
		&&*=0{}\ar@{-}[u]^e &&
	}
	\xymatrix{ \\ \\ \longrightarrow \\ }
	\xymatrix@C=12pt{
		&& _{a}&&_{b} &&& \\
		 _c & _d && *=0{\,\,\, \bullet_w} \ar@{-}[ul]  \ar@{-}[ur] &_f& \\
		&&*=0{\,\, \bullet_v} \ar@{-}[ull] \ar@{-}[ul] \ar@{-}[ur]^e \ar@{-}[urr]    &&&\\
		&&*=0{}\ar@{-}[u]^r &&&
	}
\end{equation*}

In the special case when $T$ is a corolla $C_n$, we have we have $n$ bottom elementary face maps
\[
\partial_{v,e} \colon \eta_e \to C_n,
\] one for each input $e$ of a unique vertex $v$.

\begin{remark} 
	We will usually write $\partial_f$ for a generic elementary face map and $f$ will denote either an inner edge or an outer vertex. 
\end{remark}

\begin{remark}
	If $\partial_f T \to T$ is an elementary face map, then every operation of $\partial_fT$ is also an operation of $T$, hence by Lemma \ref{operations_map} elementary face maps are morphisms of operads. In fact, the elementary face maps are monomorphisms in $\Omega$.
\end{remark}

\begin{prop}
	Every monomorphism  in $\Omega$ can be decomposed as a composition of elementary face maps.
\end{prop}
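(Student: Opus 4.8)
The plan is to reduce at once to the case of a simple face map and then argue by induction. Given a monomorphism $f\colon S\to T$, the excerpt already records that $f$ factors through its image as an isomorphism $S\cong T'$ onto a subtree $T'\subseteq T$ followed by the inclusion $T'\hookrightarrow T$, and this inclusion is a simple face map. Up to that isomorphism it therefore suffices to show that every inclusion $T'\hookrightarrow T$ of a subtree --- equivalently, by Lemma \ref{operations_map}, every inclusion of edge sets all of whose operations are operations of $T$ --- is a finite composition of elementary face maps. I would argue by induction on the number $k$ of edges of $T$ not lying in $T'$. When $k=0$ the underlying inclusion of edge sets is an equality, and one checks that in fact $T'=T$: were $L(T')$ a proper subset of $L(T)$, then the operation $(L(T');r_{T'})$ --- which is an operation of every tree --- would fail the leaf condition at a leaf of $T$ outside $L(T')$ and hence not be an operation of $T$, contradicting Lemma \ref{operations_map}; so the leaf sets agree, $T'=T$, and the empty composition does the job.

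For $k>0$ the key claim is: \emph{if $T'\subsetneq T$, then there is an elementary face map $\partial_g\colon\partial_g T\to T$ with $T'\subseteq\partial_g T\subsetneq T$.} Granting this, the inclusion $T'\hookrightarrow\partial_g T$ is again the inclusion of a subtree all of whose operations are operations of $\partial_g T$, and the number of edges of $\partial_g T$ outside $T'$ is strictly less than $k$; so by the inductive hypothesis $T'\hookrightarrow\partial_g T$ is a composition of elementary face maps, and composing with $\partial_g$ completes the step.

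To produce $g$ I would distinguish three cases. \emph{(i)} If some inner edge $e$ of $T$ does not lie in $T'$, take $g=e$: contracting $e$ changes neither the edge set nor the operations of $T'$, so $T'\subseteq\partial_e T\subsetneq T$. \emph{(ii)} Otherwise every inner edge of $T$ lies in $T'$; suppose further that the root $r_T$ does not, and let $e_0$ be the root of $T'$. Every edge strictly between $r_T$ and $e_0$ would be an inner edge of $T$ outside $T'$, of which there are none, so $e_0$ is an input of the bottom vertex $v$ of $T$; and each sibling of $e_0$ is incomparable with $e_0=\min T'$, hence outside $T'$, and --- being neither the root nor an inner edge of $T$ --- a leaf. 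So the bottom face $\partial_v$ is defined and $\partial_v T=\{t\in T:e_0\le t\}$ contains $T'$ but not $r_T$. \emph{(iii)} Otherwise $r_T$ and all inner edges of $T$ lie in $T'$, so $T\setminus T'$ consists only of leaves of $T$; pick such a leaf $x$ and let $e$ be its immediate predecessor, which is a root or an inner edge and so lies in $T'$. Since the maximal operation $(L(T');r_{T'})$ of $T'$ is an operation of $T$, the edge $e$ can have no successor in $T'$: otherwise no edge of the branch from $r_T$ to $e$ would be a leaf of $T'$, and the leaf condition of that operation would fail at $x$. As all inner edges of $T$ lie in $T'$, it follows that every immediate successor of $e$ is a leaf, so the vertex $w$ above $e$ is a top vertex all of whose inputs lie outside $T'$; taking $g=w$ gives $\partial_w T=T\setminus w$, a proper subtree of $T$ containing $T'$.

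It then remains to verify in each of the three cases that $T'$ is again a subtree of $\partial_g T$ all of whose operations are operations of $\partial_g T$; this is mechanical given the explicit descriptions of the inner, top and bottom face maps together with the definition of an operation, and I would only sketch it. The step I expect to be the real obstacle is case (iii): showing that when $T'$ omits only leaves of $T$, some \emph{top} vertex of $T$ has \emph{all} of its inputs outside $T'$. This is exactly where the hypothesis that $T'\hookrightarrow T$ is a morphism of operads --- not merely an inclusion of posets, for which no such top vertex need exist --- has to be used essentially, via the operation $(L(T');r_{T'})$ and Lemma \ref{operations_map}.
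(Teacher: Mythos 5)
Your overall strategy is sound and is essentially the standard argument which the paper itself does not spell out (it only cites Lemma 3.1 of Moerdijk--Weiss and the Barcelona notes): factor out the isomorphism onto the image and then peel off one elementary face at a time, choosing an inner, bottom or top face according to which edges of $T$ are absent from $T'$. However, there is a genuine gap in how you organize the induction. You induct on the number $k$ of edges of $T$ not in $T'$, and your base case asserts that $k=0$ forces $T'=T$. This is false: chopping off a \emph{stump} is an elementary top face map that removes no edges at all. The simplest counterexample is the top face $\eta\to C_0$ (both trees have the single edge $r$, and the unit operation $(r;r)$ is an operation of $C_0$, so this is a morphism in $\Omega$ with $k=0$ which is not an identity); similarly $L_1$ includes into the tree consisting of one edge over the root carrying a stump. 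Your base-case argument also presupposes $L(T')\subseteq L(T)$, which fails in exactly these examples ($L(T')\supsetneq L(T)$ there), and it says nothing about why the partial orders agree. Correspondingly, your three cases (i)--(iii) never produce a top face at an empty vertex, so inclusions that differ from $T$ only by stumps and leaf designations are never decomposed, and the induction stalls at $k=0$ with a false conclusion. The fix is to change the measure (e.g. induct on the number of vertices of $T$, or lexicographically on $(k,\ \#\mathrm{vertices}(T)-\#\mathrm{vertices}(T'))$) and to add the missing case in which all edges of $T$ lie in $T'$: there one must show that $T'$ is obtained from $T$ by chopping off those stumps of $T$ whose outputs are leaves of $T'$, which again uses the uniqueness/at-most-one clauses in the definition of an operation.

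A secondary point: several steps you label as mechanical are precisely where the operad-map hypothesis has to be used and are not immediate. You repeatedly use that every edge of $T'$ lies above $r_{T'}$ in the order of $T$ (your ``$e_0=\min T'$'' in case (ii)); this is true but needs the observation that each vertex of $T'$ gives an operation of $T$, whence the order of $T'$ is contained in that of $T$. More seriously, in case (iii) the parenthetical claim that, if $e$ had a successor $z\in T'$, then no edge of the branch from $r_T$ to $e$ could be a leaf of $T'$ is exactly an order-reflection statement: a priori a leaf of $T'$ could sit strictly below $z$ in $T$ while being maximal for the order of $T'$. Ruling this out requires an argument of the same flavour as your base case, e.g. putting that would-be leaf and $z$ (or a leaf of $T'$ above $z$, or a cut through $z$) into a single operation of $T'$ and violating the uniqueness (at a leaf of $T$) or at-most-one (at a stump of $T$) clause at a maximal edge of $T$ above $z$. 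The same device is needed to make the base-case leaf argument honest (you must exclude a leaf of $T'$ lying strictly below the extra leaf $x$ of $T$). None of these claims is wrong, but as written they are asserted rather than proved, and they, together with the stump issue above, are the actual content of the proposition.
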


\begin{proof}
	The statement has been stated in \cite{den set} as Lemma 3.1 and discussed in \cite{MoerBar} as Lemma 2.3.2. 
\end{proof}

\begin{definition}
	Let $S$ be a simple face of $T$. For an inner edge $e$ of $T$ we say that $\partial_e S$  \emph{exists} if $e$ is also an inner edge of $S$. Analogously, we say that $\partial_w S$ or $\partial_v S$ exist if $w$ is a top vertex or $v$ is the bottom vertex (with all inputs except possibly one being a leaf) of $S$.
\end{definition}

\subsection{Relations between face maps.} 

When working with simplicial sets, one usually considers the skeleton category $\Delta$ of non-empty finite linear orders and writes $\partial_j \colon \{0,1\ldots, n-1\}\to \{0,1,\ldots, n\}$ for the unique non-decreasing monomorphism (elementary face map) that omits $j$ in the image. With this notation, the relation between these elementary face maps states: 
\[
\partial_i \partial_j = \partial_{j-1}\partial _i, \quad \text{for } i<j.   
\] 
If we instead consider the category of all finite linear orders and consider simple face maps as $\partial_{a} \colon T\setminus \{a\} \to T$ missing $a$ in the domain, the relation would read 
\[ \partial_a\partial_b=\partial_b\partial_a, \quad \text{for any } a \text { and } b.\]

Dendroidal elementary face maps and their relations are similar, but the situation is more complicated since there are exemptions to the described relation. The difference between the trees in the domain and the codomain of an elementary face map is not only one edge, but a set of edges and for different domains these sets might have non-trivial intersection. 

\begin{definition} 
	Let $T$ be a tree with at least two vertices. 
	Let $v$ be an outer vertex and $e$ the unique inner edge of a tree $T$ attached to $v$. We say that a pair $\{\partial_v, \partial_e\}$ is a \emph{mixed pair of elementary face maps of $T$}.
\end{definition}

\begin{prop}
	Let $\{\partial_f, \partial_g\}$ be a pair of elementary face maps of a tree $T$ with at least two vertices, which is not mixed. There are elementary face maps 
	\[
	\partial_f \colon \partial_f \partial_g T \to \partial_g T \quad \text{and} \quad \partial_g \colon \partial_g \partial_f T \to \partial_f T,
	\]
	the trees $\partial_f \partial_g T$ and $\partial_g \partial_f T$ are the same and the following dendroidal relation holds:   
	\[
	\partial_f \partial_g = \partial_g \partial_f. 
	\]
\end{prop}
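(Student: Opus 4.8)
The plan is to verify the two claims -- that the composites $\partial_f\partial_g$ and $\partial_g\partial_f$ are defined, that they land on the same subtree of $T$, and that they agree as maps -- by a case analysis on the types (inner, top, bottom) of the two elementary face maps $\partial_f$ and $\partial_g$. Since all elementary face maps are inclusions of partially ordered sets (as recalled just before the statement), both the equality of the two trees $\partial_f\partial_g T$ and $\partial_g\partial_f T$ as \emph{subsets} of $T$ and the equality of the two composite maps reduce to the combinatorial identity of these subsets; the structure maps then automatically agree since everything is a sub-poset inclusion with the induced order and the induced leaf set. So the crux is purely set-theoretic: describe $\partial_f T$ and $\partial_g T$ as explicit subsets of $T$ (possibly with modified vertex and leaf data), check that the ``other'' face still exists on each, and confirm the two resulting subsets coincide.

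First I would record, from the explicit descriptions in Section \ref{elementary faces}, what each face map removes: an inner face $\partial_e$ removes the edge $e$ and merges the two adjacent vertices; a top face $\partial_w$ removes the vertex $w$ together with all its input edges, promoting the output of $w$ to a leaf; a bottom face $\partial_v$ removes the bottom vertex $v$, the root, and all inputs of $v$ except the distinguished one. Then I would run through the unordered pairs of types. The genuinely easy subcases are two inner faces $\{\partial_d,\partial_e\}$ with $d\neq e$ (the two contractions are at disjoint or chained edges and visibly commute), an inner face and an outer face at an edge/vertex that are \emph{not} attached to each other (their supports are essentially disjoint, so both faces survive and commute), and two outer faces (e.g. two distinct top vertices, or a top vertex and the bottom vertex that do not interact). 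In each of these the removed data are either disjoint or interact only in a way that does not depend on the order, and one checks existence of the second face and equality of subsets directly.

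The delicate subcases, and the place I expect the main obstacle, are those where $f$ and $g$ ``almost'' interact but the pair is still declared non-mixed -- in particular a top vertex $w$ together with an inner edge $e$ that is \emph{not} the edge attached to $w$ but is, say, adjacent to $w$'s output further down, and the case of the bottom vertex paired with an inner edge that is not its distinguished inner edge. Here one must be careful that after applying one face the other is still of the same type and still ``exists'' in the sense of the Definition preceding the statement (e.g. contracting an inner edge might change which vertex is the bottom vertex, or whether a given vertex is still a top vertex; chopping a top vertex might turn an inner edge into a leaf, destroying the inner face). The point of excluding mixed pairs is precisely that these pathologies occur \emph{only} when $\{\partial_v,\partial_e\}$ with $e$ the unique inner edge attached to the outer vertex $v$; in every other configuration the second face remains well-defined of its original type, and one then checks that $\partial_f\partial_g T$ and $\partial_g\partial_f T$ are literally the same subtree of $T$ with the same induced order and leaves. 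I would organize this as a short lemma-by-lemma or bullet-free case list, handling inner/inner, inner/top, inner/bottom, top/top, top/bottom, bottom/bottom, and in each flagging why non-mixedness is exactly what is needed; the equality $\partial_f\partial_g=\partial_g\partial_f$ as morphisms then follows because both sides are the inclusion of that common subtree into $T$.
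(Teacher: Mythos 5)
Your proposal is correct and is essentially the argument the paper has in mind: the paper offers no proof of its own beyond declaring the statement "easily checked" and pointing to the case-by-case verifications in Section 4 of \cite{GLW} and Section 2.2.3 of \cite{MoerBar}, which is exactly the type-by-type analysis (inner/inner, inner/outer, outer/outer) you outline, with non-mixedness ruling out precisely the configurations where the second face could fail to exist. One small caution: because chopping off a stump removes no edges, a face of $T$ is not determined by its edge subset alone, so in each case you should check that the two iterated faces agree as triples (edge set, induced order, leaf set) rather than merely as subsets of $T$ -- this holds in every non-mixed case, but the reduction "equality of subsets suffices" is slightly too strong as stated.
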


\begin{proof}
  The statement may be easily checked by the reader. It has been  described in detail in Section 4 of \cite{GLW} and Section 2.2.3 of \cite{MoerBar}. 
\end{proof}

Let us consider a mixed pair $\{\partial_v, \partial_e\}$ elementary faces of a tree $T$, where $v$ is a top vertex. Let $e$ be attached to another vertex $w$. 
The elementary face \[ \partial_w \colon \partial_w\partial_v T \to \partial_v T \]
exists if and only if the elementary face \[ \partial_{w\circ_e v} \colon \partial_{w\circ_e v} \partial_e T \to \partial_e T \]
exists. This is the case if all inputs of $w$ other than $e$ are leaves of $T$ and the following dendroidal relation holds:
\[
	\partial_w \partial_v =\partial_{w\circ_e v} \partial_e. 	
\]

\[ 
\xymatrix{
	&&& _{a}&&_{b} &&&& \\
	& _c & _d && *=0{\,\,\, \bullet_v} \ar@{-}[ul]  \ar@{-}[ur] &&& \\
	&&&*=0{\,\, \bullet_w} \ar@{-}[ull] \ar@{-}[ul] \ar@{-}[ur]^e     &&&&\\
	&&&*=0{}\ar@{-}[u]^h &&&& 
}
\]

\begin{definition}
	Let $e$ be an inner edge attached to a top vertex $v$ and another vertex $w$ such that all other inputs of $w$ are leaves. 
	We say that a pair of the form  $\{\partial_v, \partial_w\} $ or of the form $\{\partial_e,\partial_{w\circ_e v} \}$ is an \emph{adjacent pair of elementary face maps}. 
	
	Analogously, we define adjacent pairs $\{\partial_v, \partial_w\}$, $\{\partial_e,\partial_{v\circ_e w}\}$ for the bottom vertex $v$ and the unique inner edge $e$ attached to $v$ and $w$ (all inputs of $w$ are leaves).
\end{definition}

\begin{remark}
For completeness, we describe the general case of a mixed pair of elementary face maps and summarize the discussion about the dendroidal relations. 

Let us discuss the case where an inner edge $e$ is attached to a top vertex $v$ and another vertex $w$. Let $h$ be the output of the vertex $w$. There is a unique maximal subtree $S$  of $T$ for which $h$ is a leaf. It is obtained by deleting all edges and vertices above $h$. This can certainly be achieved by first contracting the edge $e$ and then chopping off top vertices in a certain order ending in chopping off vertex $w\circ_e v$. Another way to obtain $S$ from $T$ is by chopping off top vertices starting with $v$ and ending with $w$. There are certainly more ways to obtain $S$ as we may chop off vertices in different order. There are also other maximal subtrees of $T$ that are contained in the intersection of $\partial_v T$ and $\partial_e T$ - one for each input of $w$. 

Of course, similar consideration holds when $v$ is the bottom vertex, as we may commonly think of leaves and the root as outer edges of the tree. Since the choice of a subtree in the intersection is not canonical in any way relevant to further considerations, we leave the conclusion in the following form. 
\end{remark}

\begin{prop}
Let $T$ be a tree with at least two vertices. For any pair \[\partial_f \colon \partial_f T \to T \text{ and } \partial_g \colon \partial_gT \to T\] of elementary face maps, there are elementary face maps
$\partial_{f_1},..., \partial_{f_r}$ and $\partial_{g_1},..., \partial_{g_r}$ such that \[ \partial_{f_r}...\partial_{f_1} \partial_g T= \partial_{g_r}...\partial_{g_1} \partial_f T.\]	
\end{prop}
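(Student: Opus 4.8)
The plan is to reduce the general statement to the already-established special cases, namely: the non-mixed pair relation $\partial_f\partial_g=\partial_g\partial_f$, the adjacent mixed pair relation $\partial_w\partial_v=\partial_{w\circ_e v}\partial_e$ (and its bottom-vertex analogue), and then handle the ``non-adjacent'' mixed pairs — those where $e$ is attached to a top (or bottom) vertex $v$ and another vertex $w$ that has at least one input besides $e$ which is \emph{not} a leaf. So first I would dispose of the easy cases: if $\{\partial_f,\partial_g\}$ is not mixed, take $r=1$, $f_1=f$, $g_1=g$, and invoke the non-mixed proposition; if it is a mixed adjacent pair, take $r=1$ again and invoke the adjacent relation. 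The content is entirely in the remaining mixed non-adjacent case.

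For the mixed non-adjacent case, say $f=v$ a top vertex and $g=e$ the unique inner edge attached to $v$, with $e$ also attached below to a vertex $w$ whose other inputs are not all leaves. The idea is the one sketched in the Remark preceding the statement: both $\partial_v T$ and $\partial_e T$ contain a common maximal subtree $S$ obtained by deleting everything strictly above the output $h$ of $w$. Starting from $\partial_e T$ (which has the combined vertex $w\circ_e v$), one chops off top vertices one at a time, in an order that postpones $w\circ_e v$ until it has become a top vertex, ending by chopping it off; this realizes $S$ as $\partial_{f_r}\cdots\partial_{f_1}\partial_e T$ for a suitable sequence of top-elementary face maps. Starting instead from $\partial_v T$, one first chops off $v$'s former sibling structure appropriately and then chops off top vertices (including eventually $w$) in an order ending with $w$, realizing the same $S$ as $\partial_{g_s}\cdots\partial_{g_1}\partial_v T$. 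Since $S$ is determined by $T$ and $h$ alone (it is literally the sub-poset $\{t\in T : t\not> h\}\cup\{h\}$ with the obvious leaf set), the two results are \emph{equal}, not merely isomorphic, which is exactly what the statement demands. A small bookkeeping point: the two sequences may have different lengths, so I would pad the shorter one — but one cannot literally pad with identities since the $\partial_{f_i}$ must be genuine elementary face maps; instead I would choose $S$ to be a common subtree reachable by sequences of the \emph{same} length $r$ from both sides, or more simply continue chopping both trees down past $S$ along a common sequence of elementary faces until they agree, e.g.\ all the way down to the corolla on $h$ or to $\eta_h$, which is canonically determined by $T$ and $h$ and reachable from both $\partial_v\partial_e$-stage and $\partial_e$-stage. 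The bottom-vertex variant is handled symmetrically, thinking of the root as an outer edge.

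The main obstacle I anticipate is the equality-on-the-nose requirement together with the length-matching. It is easy to produce isomorphic results from the two sides; the care is in (i) exhibiting a canonical target — I would take the largest subtree $S_h\subseteq T$ all of whose edges are $\leq$-incomparable-above $h$, i.e.\ for which $h$ is a leaf, with its induced order and the induced leaf set, and observe this is well-defined purely in terms of $T$ and the inner edge $e$ (equivalently the vertex $w$); and (ii) checking that a decreasing sequence of top- (and when needed inner-) elementary face maps from $\partial_e T$ and from $\partial_v T$ can both be arranged to terminate exactly at this $S_h$ with the \emph{same number of steps}. For (ii) the step count is forced: the number of vertices removed is (number of vertices of $T$ strictly above $h$) in both cases, since $\partial_e$ merges two such vertices into one while $\partial_v$ removes one, and subsequent chopping removes the rest — a short induction on the number of vertices above $h$ makes both the reachability and the equality precise. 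I would write this induction carefully, treating the base case where $w\circ_e v$ (resp.\ $w$) is already a top vertex as the adjacent relation already proved, and otherwise peeling off one top vertex strictly above $h$ (other than $v$ and other than the one containing $e$) from $T$ first, which commutes past both $\partial_v$ and $\partial_e$ by the non-mixed and adjacent cases, and then applying the inductive hypothesis to the smaller tree.
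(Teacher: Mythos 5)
Your argument is correct and follows essentially the same route as the paper, which handles non-mixed and adjacent pairs by the stated relations ($r=1$) and, for the remaining mixed pairs, passes to the maximal common subtree in which the output $h$ of $w$ is a leaf, reached from both $\partial_e T$ and $\partial_v T$ by chopping off top vertices (ending with $w\circ_e v$, resp.\ $w$), with the bottom-vertex case treated analogously. Your length-matching worry is in fact automatic: $\partial_f T$ and $\partial_g T$ have the same number of vertices and $\Sub(T)$ is graded by vertex count, so any two extension sequences from these trees down to a common face necessarily have equal length, and no padding or special choice of target is needed.
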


\subsection{The partially ordered set of faces of a tree}

\begin{definition}
	We say that a tree $S$ is a \emph{face} of a tree $T$ if there is a sequence of elementary face maps $\partial_{f_1},..., \partial_{f_r}$ such that $S=\partial_{f_r}...\partial_{f_1}T$. We also say that $T$ is an \emph{extension} of $S$. We say that $\partial_{f_1},..., \partial_{f_r}$ is an \emph{extension sequence of $S$ to $T$}. 
\end{definition}

\begin{remark}
	Faces of a tree $T$ are representatives of subobjects of $T$ in $\Omega$. Also, if $S$ is a face of $T$, then $\Omega[S]$ is a representative of a subobject of $\Omega[T]$ in \dSet. 
\end{remark}

\begin{definition} 
	We denote by $\Sub(T)$ the family of all faces of a tree $T$. 
\end{definition}

For a tree $T$, the family $\Sub(T)$ is partially ordered by the relation of being a face. This poset is graded with the rank function given by the number of vertices. 

\begin{definition} \label{remF4}
	A pair $\{\partial_f, \partial_g\}$ of elementary face maps which are extensions of a tree $S$ is \emph{bad} if $f$ and $g$ are both top vertices or both bottom vertices attached to the same unique inner edge. Otherwise, we say that the pair $\{\partial_f, \partial_g\}$ is good. 
\end{definition}

\begin{lemma} \label{lemmaAxioms345} 
	Let $T$ be a tree. Consider faces $P$, $P_1$ and $P_2$ of $T$  and elementary face maps $\partial_f \colon P\to P_1$ and $\partial_g \colon P \to P_2$. Let $\mathcal{S}$ be the set of all faces $S$ of $T$ for which there exist a positive integer $r$ and elementary face maps $\partial_{f_1},..., \partial_{f_r}, \partial_{g_1},... \partial_{g_r}$ such that \[P_1=\partial_{g_1} \partial_{g_2} \ldots \partial_{g_r} S, \quad P_2=\partial_{f_1} \partial_{f_2} \ldots \partial_{f_r} S.\]
	
	\noindent Then, the set $\mathcal{S}$ is non-empty and has a unique minimal element $P_1 \cup P_2$  with respect to the induced partial order from $\Sub(T)$.
\end{lemma}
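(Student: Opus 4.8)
The plan is to recognise $\mathcal{S}$, after discarding a degenerate case, as the set of all common extensions of $P_1$ and $P_2$ inside $\Sub(T)$, and then to produce its least element by an explicit ``union'' construction that is insensitive to the ambient tree.

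\emph{Reductions.}
We may assume $P_1\neq P_2$: if $P_1=P_2$ then $\partial_f=\partial_g$, since an elementary face map is determined by its domain and codomain, and this degenerate situation does not occur in the applications. Since $\Sub(T)$ is graded by the number of vertices and $P_1$ and $P_2$ both cover $P$, every face $S$ with $P_1\leq S$ and $P_2\leq S$ satisfies $\mathrm{rank}(S)\geq\mathrm{rank}(P)+2$; moreover any two extension sequences of the kind occurring in the statement have lengths $\mathrm{rank}(S)-\mathrm{rank}(P)-1$ each, so they agree automatically and are $\geq 1$. Hence $\mathcal{S}$ equals $\mathcal{T}:=\{S\in\Sub(T):P_1\leq S,\ P_2\leq S\}$; and as $T\in\mathcal{T}$ and $\Sub(T)$ is finite, it suffices to produce a least element of $\mathcal{T}$.

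\emph{The construction.}
For any tree $R$ in which $P_1$ and $P_2$ are faces each covering $P$, I would build a tree $Q=P_1\cup P_2$ — morally, ``$P$ with the feature deleted by $\partial_f$ and the feature deleted by $\partial_g$ both reinstated'' — by a case analysis on the types (inner, top, bottom) of $\partial_f$ and $\partial_g$. If $\{\partial_f,\partial_g\}$ is a good pair in the sense of Definition~\ref{remF4}, the two reinstatements are independent, $Q$ is the apex of the commuting square of elementary face maps provided by the dendroidal relations of Section~\ref{elementary faces}, and $\partial_f$ and $\partial_g$ lift to elementary face maps $P_1\to Q$ and $P_2\to Q$ of the same respective types. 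If $\{\partial_f,\partial_g\}$ is a bad pair — $\partial_f$ and $\partial_g$ chopping outer vertices attached to one and the same inner edge $h$ — the two vertices that one would reinstate over $h$ conflict, and $Q$ is instead obtained from $P$ by reinstating their common ``core'' vertex at $h$ together with the further edges and vertices that resolve the conflict; here the sequences $P_1\to Q$ and $P_2\to Q$ may have length $>1$. In every case I would verify: (i) $Q$ is a face of $R$; (ii) $P_1\leq Q$ and $P_2\leq Q$ in $\Sub(R)$; and (iii) the underlying edge set of $Q$ is the union of those of $P_1$ and $P_2$, and the whole tree structure of $Q$ depends only on $(P,P_1,P_2)$ and not on $R$.

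\emph{Conclusion.}
Taking $R=T$ gives $Q\in\mathcal{T}$, so $\mathcal{T}\neq\emptyset$. Let $S\in\mathcal{T}$. Then $P_1$ and $P_2$ are faces of $S$ each covering $P$ — rank, i.e. the number of vertices, being computed the same way in $\Sub(S)$ and in $\Sub(T)$ — so applying the construction with $R=S$ and using (iii) yields the \emph{same} tree $Q$, now displayed as a face of $S$; that is, $Q\leq S$. Therefore $Q$ is the least, hence the unique minimal, element of $\mathcal{T}=\mathcal{S}$, and we denote it $P_1\cup P_2$.

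\emph{The main obstacle.}
I expect the bad-pair case to be the crux. There the naive recipe — reinstate each feature and read off a commuting square of matching-type elementary faces — breaks down, since two distinct vertices cannot lie over the single edge $h$; one must pin down the correct apex (a common-core vertex at $h$ together with the data resolving the two), check that it is still a bona fide face of $R$ and that both $P_1\leq Q$ and $P_2\leq Q$, and treat the sub-case in which the two conflicting vertices are merges of the core along different inputs, where $Q$ genuinely lies at distance $\geq 2$ from both $P_1$ and $P_2$. This is precisely the point at which the original argument needed correction.
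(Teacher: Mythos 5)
Your overall strategy (identify $\mathcal{S}$ with the set of common extensions of $P_1$ and $P_2$ via gradedness, then exhibit a least element by an explicit construction that can be carried out inside any ambient face $R$ containing $P_1$ and $P_2$) is a legitimately different route from the paper's, and could be made to work. But as written it has a genuine gap: the entire weight of the argument rests on ``the construction'' together with verifications (i)--(iii), and precisely these are not carried out. In the bad-pair case you only say what ``one must'' do — pin down the apex, check it is a face of $R$, check $P_1\leq Q$ and $P_2\leq Q$ — and you yourself flag this as the main obstacle; no candidate $Q$ is actually defined (compare the nontrivial partition bookkeeping in Remark~\ref{extension sequence remark}). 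Moreover the verifications are not routine: a face of a tree is \emph{not} determined by its underlying edge set once stumps are present (the corolla $C_0$ and its unit-tree face have the same edge), and an operation of $T$ all of whose edges lie in a face $S$ need not be an operation of $S$; so both claim (iii) (independence of the ambient tree) and the claim that the $Q$ built in $T$ is a face of every $S\in\mathcal{S}$ require genuine arguments using that $S$ contains $P_1$ and $P_2$. The dismissal of the case $P_1=P_2$ by appeal to ``the applications'' is also not a proof-level step (it should simply be an explicit hypothesis, since the conclusion can fail there); this is minor, but symptomatic of the sketch-like character of the argument.

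For comparison, the paper avoids the explicit construction altogether. It notes $T\in\mathcal{S}$, takes two minimal elements $S_1\neq S_2$ of the finite set $\mathcal{S}$, and intersects them as dendroidal sets: the intersection is a disjoint union of faces of $T$, and since $P$, $P_1$, $P_2$ are connected and meet $P$, they all lie in the single component $S$ containing $P$. This $S$ is a common face of $S_1$ and $S_2$ and lies in $\mathcal{S}$, contradicting minimality; hence the minimal element is unique. The explicit description of $P_1\cup P_2$ in the bad-pair case appears only afterwards, in Remark~\ref{extension sequence remark}, where it is needed for checking Axiom (F4) in the applications — it is not needed to prove Lemma~\ref{lemmaAxioms345} itself. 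So your plan front-loads exactly the combinatorial work that the paper's proof is designed to sidestep, and leaves that work undone.
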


\begin{proof}
	Since $P_1$ and $P_2$ are faces of $T$ (and the face lattice of $T$ is graded), $T$ itself is an element of $\mathcal{S}$.
	Since $\mathcal{S}$ is finite, it has minimal elements. Assume $S_1$ and $S_2$ are two different minimal elements of $\mathcal{S}$. Their intersection (as dendroidal sets) is a disjoint union of faces of $T$. Also, the intersection contains $P_1$ and $P_2$. Every connected component of the intersection is a face of $S_1$ and a face of $S_2$. Since $P$ is connected there is a unique tree $S$ in the intersection of $S_1$ and $S_2$ which contains $P$. Both $P_1$ and $P_2$ are connected and have non-empty intersection with $P$, so they are also contained in $S$. Note that $S$ is a face of both $S_1$ and $S_2$. Also, $P_1$ and $P_2$ are faces of $S$, so $S$ is an element of $\mathcal{S}$. This contradicts the minimality of $S_1$ and $S_2$. Hence, $\mathcal{S}$ has a unique minimal element. 
\end{proof}

\begin{example}
	Let $P$ be a tree with one edge $e$ and no vertices, and let $P_1$ and $P_2$ be the trees as in the following picture. 
	\[  
	\xymatrix@R=10pt@C=12pt{
		_{c_1} &_{a_1}& _{a_2} & _{c_2} &  _{a_3} & _{c_3}  && \\
		&&&*{ \bullet}  \ar@{-}[ulll] \ar@{-}[ull] \ar@{-}[ul] \ar@{-}[u] \ar@{-}[ur] \ar@{-}[urr]  &&&&\\
		&&&*=0{}\ar@{-}[u]^{e} &&&&
	}
	\xymatrix@R=10pt@C=12pt{
		& _{c_1} &_{b_1}&  _{c_2} & _{b_2} &  _{b_3} & _{c_3}  && \\
		&&&*{ \bullet}   \ar@{-}[ull] \ar@{-}[ul] \ar@{-}[u] \ar@{-}[ur] \ar@{-}[urr] \ar@{-}[urrr] &&&&\\
		&&&*=0{}\ar@{-}[u]^{e} &&&&
	}
	\]	
	Then $P_1\cup P_2$ is the following tree (and we may think $T$ is the same tree). 
	\[  
	\xymatrix@R=10pt@C=12pt{
		& _{a_1}&_{a_2} &&_{b_2}&_{b_3}&&&& \\
		&_{c_1}& *{ \quad \bullet_{b_1}} \ar@{-}[ul]  \ar@{-}[u] & _{c_2} & *{\quad  \bullet_{a_3}} \ar@{-}[u]  \ar@{-}[ur] &  && \\
		&&&*{ \bullet}  \ar@{-}[ull]\ar@{-}[ul] \ar@{-}[u] \ar@{-}[ur] \ar@{-}[urr]_{c_3}   &&&&\\
		&&&*=0{}\ar@{-}[u]^{e} &&&&
	}
	\]	
\end{example}

\begin{remark} \label{extension sequence remark}
Given elementary face maps $\partial_f \colon P\to P_1$ and $\partial_g \colon P \to P_2$, we can explicitly construct $P_1\cup P_2$. Note that for a good pair $\{\partial_f, \partial_g\}$ the construction of $P_1\cup P_2$ is obvious and we have $r=1$, $f_1=f$ and $g_1=g$. 

We describe the construction in the case when $f$ and $g$ are top vertices attached to the same edge $e$. Let us write $f\cap g=C$, $f=A\cup C$ and $g=B\cup C$, with $A\cap B=\emptyset$. Furthermore let us enumerate 
\[ C=\{c_1,\ldots, c_k\}, \quad A=\{a_1,\ldots, a_n\}, \quad B=\{b_1, \ldots, b_m\} \]
in such way that there are partitions
\[
A=A_1\cup \ldots \cup A_r, \quad B=B_1\cup \ldots \cup B_r,
\]
and $s\in \{1, \ldots, r\}$ satisfying \[B_i=\{b_i\} \text{ and } b_i \leqslant a \text{ for } a\in A_i, \, i=1,\ldots,s, \] while \[A_i=\{a_{m+r-i}\} \text{ and }  a_{m-r+i} \leqslant b \text{ for } b\in B_i,\, i=s+1, \ldots, r.\] 
Then $P_1 \cup P_2$ is constructed so that \[ C\cup B_1\cup \ldots \cup B_s \cup A_{s+1} \cup \ldots \cup A_r \] is the set of inputs of the vertex with output $e$, $A_i$ is the set of inputs of the vertex with output $b_i$ for $i=1, \ldots, s$, and $B_i$ is the set of inputs of the vertex $a_{m+r-i}$ for $i=s+1, \ldots, r$. 
\end{remark}

\subsection{Planar structures of trees and the total order of elementary faces} \label{SectOrder}

\begin{definition}
A \emph{planar structure} on a tree $T$ is a family of total orders $(v,\preceq_v)$, one for each vertex $v$ of $T$.
\end{definition}

\begin{lemma}
Let $(T, \leq, L)$ be a tree and $e,f$ two distinct elements of $T$ other than the root $r_T$. There exist unique siblings $e', f' \in T$ such that $e'\leq e$ and  $f'\leq f$.
\end{lemma}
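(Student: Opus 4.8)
The plan is to work directly from the axioms for a tree, exploiting the defining property that for every edge the branch to the root is totally ordered. Let $e, f$ be distinct edges of $T$, both different from the root. First I would consider the branch from $e$ to the root, $B_e = \{t \in T \mid t \leq e\}$, and the branch from $f$ to the root, $B_f = \{t \in T \mid t \leq f\}$. Both are totally ordered by $\leq$, both contain the root $r_T$, and neither is a singleton (since $e \neq r_T$ and $f \neq r_T$, each branch contains at least the root and one more element). The natural candidate for $e'$ is the immediate successor of $r_T$ inside $B_e$ (the second element of this finite totally ordered set), and likewise $f'$ is the second element of $B_f$; I would then argue these are siblings exactly when $B_e$ and $B_f$ first diverge right above the root, and handle the general case by replacing the root with the largest common element of the two branches.

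More precisely, here are the key steps in order. Step one: consider the set $B_e \cap B_f$. It is a subset of both branches, hence totally ordered, finite and non-empty (it contains $r_T$), so it has a greatest element, call it $c$. Step two: since $e \notin B_f$ or more carefully since $e \neq f$ and the branches are totally ordered, observe that $c \neq e$ and $c \neq f$ — indeed if $c = e$ then $e \leq f$ and $e$ lies on the branch of $f$; this is not immediately a contradiction, so I would instead simply note that in all cases $c$ is not a leaf (it has a successor towards $e$ or towards $f$), and set $e'$ to be the immediate successor of $c$ in $B_e$ and $f'$ the immediate successor of $c$ in $B_f$ (these exist because $B_e, B_f$ are finite totally ordered sets in which $c$ is not the top element; if $c = e$ then we are in the degenerate situation $e < f$, which I treat below). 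Step three: $c$ is not a leaf, so it is the output of a unique vertex $v = $ the set of immediate successors of $c$; both $e'$ and $f'$ belong to $v$, so they are inputs of the same vertex, i.e. siblings (provided $e' \neq f'$). Step four: $e' \neq f'$, because if $e' = f'$ then $e' \in B_e \cap B_f$ and $e' > c$, contradicting maximality of $c$. Step five (uniqueness): suppose $e'', f''$ are siblings with $e'' \leq e$ and $f'' \leq f$. Then $e'', f''$ are distinct inputs of one vertex $w$ with some output $d$, and $d < e''$, $d < f''$, so $d \in B_e \cap B_f$, giving $d \leq c$; since $e'' \leq e$ and $e'' > d$, the element $e''$ lies on the branch $B_e$ strictly above $d$, and similarly for $f''$ — a short argument with the total order on $B_e$ forces $d = c$ (if $d < c$ then $c \in B_e$ with $c > d = $ output of $w$, but $e''$ is the immediate successor of $d$ in... ) and hence $e'' = e'$, $f'' = f'$. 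I would also dispatch the degenerate case where $c = e$ (so $e < f$): then $e$ is not a leaf, $e$ is the output of a vertex, the branch $B_f$ passes through $e$ to some immediate successor $f'$ of $e$, and I must produce $e'$ as a sibling of $f'$ — but then $e'$ and $f'$ would be siblings with $e' \leq e$ forcing $e' \leq e$ and $e' $ an input of the vertex above $e$, while $e' < e = $ output of that vertex is impossible; so in fact this case cannot occur, meaning $c \neq e$ and $c \neq f$ automatically, which is cleaner to observe up front.

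The main obstacle is bookkeeping rather than conceptual depth: one must be careful that $c$, the maximal common element of the two branches, is genuinely not a leaf and genuinely strictly below both $e$ and $f$, so that the immediate successors $e', f'$ in the respective branches are well-defined and distinct. The cleanest route is probably to prove at the outset that $c < e$ and $c < f$ (equivalently $c \notin \{e,f\}$): if $c = e$ then $e \leq f$, so $e$ lies on $B_f$, hence $e$ is not maximal in $T$ and in particular not a leaf, and the edge just above $e$ on the branch to $f$ together with any sibling would have to be inputs of the vertex whose output is $e$; chasing the order relations shows $e$ itself would have to be an input of that vertex, which is absurd — so $c \neq e$, and symmetrically $c \neq f$. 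Once $c < e$ and $c < f$ are established, $c$ has immediate successors in both $B_e$ and $B_f$, these successors lie in the unique vertex above $c$, they are distinct by maximality of $c$, and uniqueness follows from the total-order structure of the branches as sketched. I expect the whole argument to be short once this structural observation about $c$ is in place.
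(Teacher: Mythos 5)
Your treatment of the incomparable case is essentially the paper's own argument: take the largest element $c$ of $B_e\cap B_f$, let $e'$, $f'$ be the least elements of $B_e$, $B_f$ strictly above $c$; they are immediate successors of $c$, hence inputs of the vertex with output $c$, and distinct by maximality of $c$. (Your uniqueness sketch trails off, but the intended argument -- the output $d$ of the competing vertex lies in $B_e\cap B_f$, and $d<c$ forces $e''$ and $f''$ both $\leq c$, hence comparable, contradiction -- does go through.) The genuine problem is your handling of the comparable case. You claim that $c\neq e$ and $c\neq f$ ``automatically'', i.e.\ that the case $e\leq f$ (or $f\leq e$) cannot occur, but it plainly can: take a linear tree with edges $r< e< f$, or any tree in which $e$ lies on the branch from $f$ to the root. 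Your argument for excluding it is circular -- you assume that a pair of (distinct) siblings with the stated property must exist and derive a contradiction, which only shows that \emph{if} the lemma is read as demanding two distinct siblings, then no such pair exists when $e$ and $f$ are comparable; it does not show the case is vacuous. So under your reading the proof is incomplete (a whole case of the hypothesis is left unproved, and in fact false), and the construction of the total order $\preceq$ in the paper, which applies the lemma to arbitrary pairs of non-root edges, would break down exactly there.

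The paper resolves this by treating the comparable case first and degenerately: if $f\leq e$ it sets $e'=f'=f$, and if $e\leq f$ it sets $e'=f'=e$; the ``siblings'' of the statement are thus allowed to coincide (and one should read the paper's ``immediate predecessors of $g$'' as ``immediate successors of $g$'' given the root-minimal convention). To repair your proof, drop the claim that $c\notin\{e,f\}$, split off the case where $e$ and $f$ are comparable, and there exhibit the degenerate pair $e'=f'=\min(e,f)$ (noting, as you in effect already proved, that no pair of distinct siblings can serve in that case, which is what makes the degenerate choice the right reading of the statement); the incomparable case you already have, and it coincides with the paper's proof.
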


\begin{proof}
If $f\leq e$ then $e'=f'=f$. Similarly, if $e\leq f$ then $e'=f'=e$. Otherwise, let us assume that $e$ and $f$ are not comparable. The finite totally ordered set $\{h\in T \mid h\leq e\} \cap \{h\in T \mid h\leq f\}$ is non-empty since it contains the root of $T$. Hence there exists a largest element $g$ such that $g\leq e$ and $g \leq f$. Then $e'$ is the smallest element such that $g < e' \leq e$ and $f'$ is the smallest element such that $g < f' \leq f$. By minimality $e'$ and $f'$ are immediate predecessors of $g$ and hence siblings.
\end{proof}

For every planar structure on $(T, \leq, L)$ given by a family of total orders $\preceq_v$ on vertices $v$, we can define a relation $\preceq$ on the set $T$ by
\begin{equation}\label{NatTotalOrd}
e \preceq f \Leftrightarrow e' \preceq_v f'
\end{equation} for $e'$ and $f'$ associated to $e$ and $f$ by the previous lemma. One easily checks that $\preceq$ is a total order on $T$ which extends the partial order $\leq$.

\begin{example}
In terms of graphs this total order is given by traversing the tree $T$ from left to right and from bottom to top. We have
$\{r \preceq c \preceq d \preceq e\preceq  a\preceq b\preceq f\}$ for the planar tree in Example \ref{tree}.
\end{example}

Every total order $\preceq$ extending the partial order $\leq$ of a tree $T$ induces a total order
$\leq$ on the set of operations of $T$ such that $(A, t) \leq (B, s)$
 if
 \begin{itemize}
 \item $t\preceq s$ or
 \item $t=s$ and $A$ is empty or
 \item $t=s$, $A=\{t_1,...,t_m\}$, $B=\{s_1,...,s_n\}$ and there is a positive integer $k$ such that $t_i=s_i$ for for $1\leq i \leq k-1$ and $t_k \preceq s_k$, $t_k\neq s_k$.
\end{itemize}

To every elementary face map of $T$ we can assign an operation of $T$ - we assign $(e;e)$ to an inner elementary face map $\partial_e$, $(w;e)$ to a top elementary face map $\partial_w$ where $e$ is the output of $w$, and $(v;r)$ to a bottom face map $\partial_{v}$. 

For any face $S$ of $T$ with at least two vertices, this gives a total order on the set of faces of $S$
\begin{equation*}
\F(S) = \{\partial_f \mid \partial_f : \partial_f S \to S\}
\end{equation*}
because to any elementary face map $\partial_f$ we associated an operation in $S$ which is also an operation in $T$. The case of a corolla $S$ is an exception as we have assigned the same operation to all bottom faces of a corolla, but that is not relevant because we will use this total order on $\F(S)$ only when $S$ has at least two vertices. Of course, the total order of edges of a corolla $S$ gives a natural total order on $\F(S)$, which we will not need.

Also, we get a total order on the set of extensions of $S$
\[
\E(S) = \{\partial_f \mid \partial_f : \partial_f R \to R, R \in \Sub(T), S=\partial_f R \}
\]
because to any elementary face map $\partial_f$ we associated an operation in $R$ which is also an operation in $T$. These considerations have the following important consequence that we will use in the next section. 

\begin{cor} \label{OrderComp}
For faces $S$ and $R$ of a tree $T$ and a commutative square
$$ \xymatrix{S \ar[d]_{\partial_f} \ar[rr]^{\partial_g} && \partial_f R \ar[d]^{\partial_f} \\  \partial_gR \ar[rr]_{\partial_g} && R} $$
of maps in $F$ with $S= \partial_f \partial_g R = \partial_g \partial_f R$, we have $\partial_f \leq \partial_g$ in $\E(S)$ if and only if $\partial_f \leq \partial_g$ in $\F(R)$.
More generally, if $P$ and $Q$ are faces of $T$ such that $\partial_f, \partial_g \in \F(P)$ and $\partial_f, \partial_g \in \F(Q)$ then $\partial_f \leq \partial_g$ in $\F(P)$ if and only if $\partial_f \leq \partial_g$ in $\F(Q)$. Analogous statement holds for the sets of extensions $\E(P)$ and $\E(Q)$. 
\end{cor}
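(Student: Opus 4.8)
The plan is to notice that all the total orders appearing in the statement are pullbacks of one and the same total order. Recall that, by construction, the order on $\F(U)$ --- for $U$ a face of $T$ with at least two vertices --- is obtained by assigning to each elementary face map $\partial_f$ of $U$ an operation of $T$ and comparing these operations in the fixed total order on the operations of $T$ determined by the chosen total order $\preceq$ of the edges of $T$; likewise for $\E(U)$, where $\partial_f$ is now an extension of $U$ with some codomain $R\in\Sub(T)$ and the operation is read off in $R$. The assignment is: $(e;e)$ when $f$ is an inner edge $e$, and $(f;e)$ with $e$ the output of $f$ when $f$ is an outer vertex --- for a top vertex this is the recipe verbatim, and for the bottom vertex the root coincides with the output. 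In particular the operation of $T$ attached to $\partial_f$ mentions only $f$ and, when $f$ is a vertex, its output.

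The one point to check is therefore: \emph{the operation of $T$ attached to $\partial_f$ depends only on $f$ (as an edge or a vertex of $T$), not on the face of $T$ in which $\partial_f$ is regarded as an elementary face map.} When $f$ is an inner edge this is clear, since the assigned operation $(e;e)$ involves only $e$. When $f$ is a vertex it reduces to the assertion that the output of a vertex is intrinsic, i.e. it is the same edge in $T$ as in any face of $T$ in which that vertex still occurs; this follows by inspecting the three types of elementary face maps described in Section \ref{elementary faces}: contracting an inner edge, chopping off a top vertex, or chopping off a bottom vertex never changes the immediate predecessor of the edges constituting a surviving vertex, the only potentially troublesome configurations (the contracted edge being the output, the chopped vertex being the vertex in question, or one of its inputs being that vertex's output) being precisely those in which the vertex does not survive. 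The corollas form the sole exception to this whole scheme --- all of their bottom faces are assigned the same operation --- but this is irrelevant, as the orders on $\F$ and $\E$ are only used for trees with at least two vertices, where no such ambiguity arises.

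Granting this, the Corollary is purely formal. For the general statement, $\partial_f$ and $\partial_g$ are attached the same operations of $T$ whether these are read off inside $P$ or inside $Q$, so the inequality $\partial_f\le\partial_g$ is decided by the same comparison of operations of $T$ in both cases, hence holds in $\F(P)$ if and only if it holds in $\F(Q)$; the same reasoning applies to $\E(P)$ and $\E(Q)$, using that an element of $\E(S)$ remembers its codomain, so that the operation attached to it is well defined. For the square, $\partial_f$ is an elementary face map of both $\partial_gR$ and $R$, and $\partial_g$ of both $\partial_fR$ and $R$, so the operations attached to $\partial_f$ and $\partial_g$ in $\E(S)$ (computed in the codomains $\partial_gR$ and $\partial_fR$) coincide with those computed in $R$, and consequently $\partial_f\le\partial_g$ in $\E(S)$ if and only if $\partial_f\le\partial_g$ in $\F(R)$. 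The only genuine work is the intrinsicality check in the second paragraph, which is a short routine verification from the explicit description of the elementary face maps.
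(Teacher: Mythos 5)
Your overall strategy is the same as the paper's implicit one (the paper states this corollary without proof, as a direct consequence of the construction): all the orders are restrictions of the single total order on the operations of $T$ along the assignment $\partial_f \mapsto (f;\mathrm{output\ of\ }f)$. The problem is that your key verification --- that this assigned operation depends only on the label $f$ and not on the ambient face --- is false, and the error sits exactly in your parenthetical claim that in the troublesome configurations the vertex does not survive. Take a tree containing edges $r<c<a,b$ in which the vertex above $r$ is the unary vertex $\{c\}$ and the vertex above $c$ is $w=\{a,b\}$. Contracting $c$ merges the unary vertex with $w$, and the merged vertex $\{c\}\circ_c w$ is again the \emph{set} $\{a,b\}$; so $w$ survives as the label of a top face map in the face with $c$ contracted, but its output there is $r$ rather than $c$. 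Hence $\partial_w$ is assigned the operation $(\{a,b\};c)$ when read in a face containing $c$, and the different operation $(\{a,b\};r)$ when read in a face where $c$ has been contracted. So the orders on $\F(P)$ and $\F(Q)$ are \emph{not} pullbacks of the order on operations along a face-independent assignment, and "granting this, the Corollary is purely formal" does not go through as written.

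The statement itself can be saved, but for different reasons in the two halves, and this is where the real work lies. In the first half the hypothesis that the square exists with $\partial_f\partial_g R=\partial_g\partial_f R$ excludes precisely the configuration above: if $g$ were the output edge of the top vertex $f$ with a unary vertex below, then $\{\partial_f,\partial_g\}$ would be a mixed pair and $g$ is not an inner edge of $\partial_f R$, so no such square exists; since top and bottom faces only delete leaves, or the root together with leaves, they never remove the output of a surviving vertex, so under the square hypothesis the assigned operations really are unchanged and your argument can be repaired. For the "more generally" half (which is the form actually invoked for $\F_F$ and $\E_F$, where the third face map responsible for the output change need not lie in $F$, so Axiom (F1) does not rescue you) an additional verification is needed: when the output of a vertex $w$ differs in $P$ and $Q$, the two outputs lie on the common branch of $T$ below $w$, and one must check by a case analysis on branching points that the $\preceq$-comparison with the operation assigned to the other face map is nevertheless the same in $P$ as in $Q$ --- for instance, if the other map is $\partial_e$ with $e$ an inner edge of both faces, the fact that $e\in Q$ while $w$ is the set of \emph{all} immediate successors of its output in $Q$ forces $e$ not to lie $\preceq$-strictly between the two candidate outputs. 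That verification, absent from your proposal, is the genuine content of the corollary.
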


\section{The combinatorics of dendroidal anodyne extensions}

\subsection{The method of canonical extensions.} 

\begin{definition} A subobject of a tree $T$ in the category $\Omega$ is represented by a face $S$. In the category of dendroidal sets, a subobject $A$ of a representable $\Omega[T]$ is a union of representables represented by a set of faces of $T$. We will often describe subobjects $A\subseteq \Omega[T]$ equivalently by specifying those faces $P$ of $T$ that do not factor through the inclusion $A\to \Omega[T]$ and we will call such face $P$ a \emph{missing face with respect to $A$}. In that case we write $P\not \subseteq A$.
\end{definition}

\begin{definition}
	Any elementary face map $\partial_f : \partial_f T \to T$ induces a map of representable dendroidal sets $\partial_f : \Omega[\partial_f T] \to \Omega[T]$. The union of all images of maps $\partial_f : \Omega[\partial_f T] \to \Omega[T]$ is denoted by $\partial \Omega[T]$.
	There is an inclusion $\partial \Omega[T] \to \Omega[T]$ and any such map is called a \emph{boundary inclusion}.
\end{definition}

\begin{definition}
The smallest class closed under pushouts, retracts and transfinite compositions containing all boundary inclusions is called the class of \emph{normal monomorphisms}. 
\end{definition}

\begin{definition}
	For an elementary face map $\partial_f : \partial_f T \to T$ we denote by $\Lambda^f[T]$ the union of images of all elementary face maps $\partial_g : \Omega[\partial_g T] \to \Omega[T], g\neq f$.
	
	There is an inclusion $\Lambda^f[T] \to \Omega[T]$ and any such map is called a \emph{horn inclusion}. A horn inclusion is called inner (resp. top or bottom) if $\partial_f$ is an inner (resp. top or bottom) elementary face map. 
\end{definition}

\begin{definition}
The smallest class of normal monomorphisms that is closed under pushouts, retracts and transfinite compositions containing inner (resp. inner and top, all) horn inclusions is called the class of \emph{operadic anodyne extensions} (resp. \emph{covariant anodyne extensions}, \emph{stable anodyne extensions}). 
\end{definition}

Let $R$ be a tree. Under certain conditions on a dendroidal subset $A$ of the representable dendroidal set $\Omega[R]$ we will show that the inclusion $A\to \Omega[R]$ is a dendroidal anodyne extension (operadic, covariant or stable). The approach that we will present has the advantage of being applicable to obtain many old and new results and that these conditions on $A$ are easily verified in the concrete cases that we consider. The idea is to form a filtration
\begin{equation*} \label{filtration}
A=A_0 \subset A_1 \subset ...\subset A_{N-1} \subset A_N =\Omega[R]
\end{equation*}
in which every inclusion $A_n \to A_{n+1}$ is a pushout of a coproduct of a family of horn inclusions of faces of $R$, i.e. fits into a pushout diagram
\begin{equation*}
\xymatrix{\coprod \Lambda^f [P]    \ar[d] \ar[r] & A_n \ar[d] \\  \coprod \Omega[P] \ar[r] & A_{n+1}}
\end{equation*}
where the coproduct ranges over pairs $(\partial_f P, P)$ of faces of $R$ that will be carefully formed and ordered in the way we now describe in detail.

\begin{definition}
	Let $R$ be a tree and $A \subseteq \Omega[R]$. Let $F$ be a subset of the set 
	\[ \{\partial_f \colon \partial_f P \to P \mid P \in \Sub(R); \quad  P, \partial_f P\not\subseteq A\}. \]
	For every missing face $P$ of $R$ we define the set of \emph{$F$--extensions} of $P$
	\begin{equation*}
		\E_F(P) =\{ \partial_f \colon P=\partial_fP' \to P' \mid \partial_f \in F\}
	\end{equation*}
	and the set of \emph{$F$--faces} of $P$
	\begin{equation*}
		\F_F(P)=\{\partial_f\colon \partial_f P\to P \mid \partial_f \in F \}.
	\end{equation*}
\end{definition}

\begin{definition} \label{def extension set}
Let $R$ be a tree and $A \subseteq \Omega[R]$.  We say that a subset $F$ of the set 
\[ \{\partial_f \colon \partial_f P \to P \mid P \in \Sub(R); \quad  P, \partial_f P\not\subseteq A\}
\]  is an \emph{extension set} with respect to $A$ if the following Axioms (F1)-(F5) are satisfied.\\

\begin{enumerate}
	\item[(F1)] \textbf{The Forbidden Pair Axiom.} The set $F$ does not contain any mixed, adjacent or bad pair of elementary face maps.
	\item[(F2)] \textbf{The Bad Pair Axiom.} For any extension $\partial_g \colon P\to P'$ which is not an element of $F$ there is at most one extension $\partial_f\colon P \to P''$ in $F$ such that $\{\partial_f, \partial_g\}$ is a bad pair.
	\item[(F3)] \textbf{The Face Closure Axiom.} For any commutative square  \[ \xymatrix{\partial_g\partial_f P \ar[d]_{\partial_f} \ar[rr]^{\partial_g} && \partial_f P \ar[d]^{\partial_f} \\  \partial_gP \ar[rr]_{\partial_g} && P,} \] if any two maps labeled $\partial_f$ and $\partial_g$ are in $F$, then all four maps are in $F$.
	\item[(F4)] \textbf{The Extension Closure Axiom.} For any extension $\partial_f\colon P \to P'$ in $F$ and any  extension $\partial_g \colon P\to P''$ (not necessarily in $F$), all elements of any extension sequence $\partial_{f_1}, ..., \partial_{f_r}$ of $P''$ to $P'\cup P''$ are elements of $F$.
	\item[(F5)]	\textbf{The Existence Axiom.} For any missing face $P$, at least one of the sets $\F_F(P)$ and $\E_F(P)$ is non-empty.
\end{enumerate}
\end{definition}

\begin{example} \label{SegalCore}
	The \emph{Segal core} $Sc[R]$ of a tree $R$ is the union of images of all monomorphisms $\Omega[C_n] \to \Omega[R]$ which are compositions of only top and bottom elementary face maps (no inner elementary face maps). 
	The set 
	\begin{equation*}
		F=\{\partial_e \colon \partial_e P \to P \mid P\in \Sub(R) , e \textrm{ is an inner edge of } P  \}
	\end{equation*} is an extension set with respect to $Sc[R]$. The axioms (F1), (F2), (F3) and (F4) follow because $F$ contains only inner face maps. The Existence Axiom is obvious as each missing face $P$ either has an inner edge (so $\F_F(P)$ is non-empty) or it is a corolla obtained by contracting an inner edge in a bigger tree (so $\E_F(P)$ is non-empty). Theorem \ref{theorem method} will give one more proof that the inclusion $Sc[R] \to \Omega[R]$ is an operadic anodyne map, originally proven in \cite{den Seg sp} as Proposition 2.4.
\end{example}

For the rest of this section let us fix a tree $R$, a planar structure on $R$, a dendroidal subset $A$ and an extension set $F$ with respect to $A$. By the considerations in subsection \ref{SectOrder}, the planar structure on $R$ induces a total order on every set $\E_F(P)\subseteq \E(P)$ and $\F_F(P)\subseteq \F(P)$ for every face $P$ of $R$. By Corollary \ref{OrderComp} these total orders are compatible in the sense that for two elementary face maps $\partial_f$ and $\partial_g$, if $\partial_f \leq \partial_g$ in any set $\E_F(P)$ or $\F_F(P)$ for some $P$, then the same relation holds in all sets $\E_F(P)$ and $\F_F(P)$ that contain both $\partial_f$ and $\partial_g$.

\begin{definition} Let $R$ be a planar tree, $A \subseteq \Omega[R]$ and $F$ an extension set with respect to $A$. 
Let $P$ be a face of $R$ such that $\F_F(P)$ is non-empty. We say that an elementary face map $\partial_f \colon \partial_f P \to P$ is \emph{a canonical extension} if $\partial_f=\min \F_F(P)$ and $\partial_f=\min \E_F(\partial_f P)$. Since an elementary face map is determined by its domain and codomain we also say that the pair $(\partial_f P, P)$ is a canonical extension.
\end{definition}

\begin{remark}
It might happen that any one of the conditions $\partial_f=\min \F_F(P)$ and $\partial_f = \min \E_F(\partial_f P)$ holds, while the other does not hold.
\end{remark}

\begin{lemma}  \label{disjoint can ext}
Canonical extensions are disjoint. More precisely, for an extension set $F$, the following two statements hold.    
	
\begin{enumerate} 
	 \item \label{lema1}  For any two canonical extensions $(\partial_{f_1} P_1, P_1)$ and $(\partial_{f_2} P_2, P_2)$, $P_1=P_2$ holds if and only if $\partial_{f_1} P_1 = \partial_{f_2} P_2$ holds.
	\item \label{lema2} Pairs $(\partial_g \partial_f P, \partial_f P)$ and $(\partial_f P, P)$ can not be both canonical extensions.
\end{enumerate}

\end{lemma}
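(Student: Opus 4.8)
The idea is to derive each statement from the two minimality conditions defining a canonical extension, combined with the compatibility of the total orders on $\F_F$ and $\E_F$ (Corollary \ref{OrderComp}) and the closure Axioms (F3)--(F4). For part \eqref{lema1}, the direction ``$P_1 = P_2 \Rightarrow \partial_{f_1}P_1 = \partial_{f_2}P_2$'' is immediate: if $P_1 = P_2 =: P$, then $\partial_{f_1} = \min\F_F(P) = \partial_{f_2}$, so the two elementary face maps coincide and hence so do their domains. For the converse, suppose $\partial_{f_1}P_1 = \partial_{f_2}P_2 =: Q$. Then both $\partial_{f_1}$ and $\partial_{f_2}$ lie in $\E_F(Q)$, and by the second defining condition of a canonical extension each equals $\min\E_F(Q)$; hence $\partial_{f_1} = \partial_{f_2}$ as elements of $\E_F(Q)$, so their codomains $P_1$ and $P_2$ agree.

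For part \eqref{lema2}, I would argue by contradiction: assume both $(\partial_f P, P)$ and $(\partial_g\partial_f P, \partial_f P)$ are canonical extensions. From the first being canonical, $\partial_f = \min\E_F(\partial_f P)$; from the second, $\partial_g = \min\F_F(\partial_f P)$. So $\partial_g \in \F_F(\partial_f P)$ and $\partial_f \in \E_F(\partial_f P)$ are both defined at the face $\partial_f P$. The key point is to produce, from the pair $\{\partial_f, \partial_g\}$ acting on $\partial_f P$ (one as a face, one as an extension), a genuine comparison in a common order. Concretely, $\partial_g \colon \partial_g\partial_f P \to \partial_f P$ and $\partial_f \colon \partial_f P \to P$ compose, and using the dendroidal relations (Propositions on commuting face maps, together with Lemma \ref{lemmaAxioms345} to form $P \cup \partial_g\partial_f P$ when the pair is bad, mixed or adjacent) one finds a face on which both $\partial_f$ and a suitable translate of $\partial_g$ appear as $F$-faces; Axioms (F3)--(F4) guarantee all the relevant maps stay in $F$, and the Forbidden Pair Axiom (F1) rules out exactly the degenerate configurations where the comparison would fail. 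Then Corollary \ref{OrderComp} transports the inequality ``$\partial_f = \min\F_F(P)$'' to the face $\partial_f P$, forcing $\partial_f \le \partial_g$ there, while ``$\partial_g = \min\E_F(\partial_f P)$'' combined with ``$\partial_f = \min\E_F(\partial_f P)$'' forces $\partial_g \le \partial_f$ — and since $\partial_f \neq \partial_g$ (their domains differ, as $\partial_g\partial_f P$ has one fewer vertex than $\partial_f P$), this is the desired contradiction.

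The main obstacle I anticipate is the bookkeeping in part \eqref{lema2}: one must check that $\partial_f$, viewed once as an \emph{extension} of $\partial_f P$ and once as a \emph{face} of $P$, really is comparable under a single induced order with $\partial_g$, and that no Forbidden Pair (mixed, adjacent, bad — Axiom (F1)) obstructs forming the common face $P \cup \partial_g\partial_f P$ on which the comparison lives. This is exactly the sort of case analysis that Axioms (F1)--(F4) are designed to tame, so I would organize the argument around the dichotomy ``$\{\partial_f,\partial_g\}$ good vs.\ bad,'' handling the good case directly via the commuting-square relation and Corollary \ref{OrderComp}, and the bad case via Lemma \ref{lemmaAxioms345} and the Extension Closure Axiom (F4) applied to the extension sequence from $\partial_g\partial_f P$ to $P \cup \partial_g\partial_f P$. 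The remaining steps are routine once the right common face is in hand.
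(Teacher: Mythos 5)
Part \eqref{lema1} of your proposal is correct and is exactly the paper's argument: uniqueness of the minimum of $\F_F(P)$ gives one direction, uniqueness of the minimum of $\E_F(Q)$ the other. For part \eqref{lema2} your overall strategy (build a commutative square, get all four maps into $F$, and play the minimality conditions against each other via Corollary \ref{OrderComp}) is the paper's, but the decisive step is not actually carried out. The two conditions you single out at the start, $\partial_f=\min \E_F(\partial_f P)$ and $\partial_g=\min \F_F(\partial_f P)$, live in two different posets attached to the same tree $\partial_f P$ and are never compared by Corollary \ref{OrderComp}; they cannot by themselves yield a contradiction. What does the job are the \emph{other} two conditions: $\partial_f=\min \F_F(P)$ and $\partial_g=\min \E_F(\partial_g\partial_f P)$. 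Once the square with fourth vertex $\partial_g P$ exists and (F3) puts all four maps in $F$, one has $\partial_g\in \F_F(P)$, hence $\partial_f\le \partial_g$, and $\partial_f\in \E_F(\partial_g\partial_f P)$ (as the map $\partial_g\partial_f P\to\partial_g P$), hence $\partial_g\le \partial_f$; Corollary \ref{OrderComp} identifies the two orders and gives the contradiction. Your final sentence instead invokes ``$\partial_g=\min \E_F(\partial_f P)$'', which is neither a hypothesis nor meaningful ($\partial_g$ is a face, not an extension, of $\partial_f P$; the second canonicity condition concerns $\E_F(\partial_g\partial_f P)$), and combining it with $\partial_f=\min \E_F(\partial_f P)$ would force $\partial_f=\partial_g$ rather than an inequality --- so the inequality $\partial_g\le\partial_f$ is never derived.

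The planned dichotomy ``good vs.\ bad'' handled via Lemma \ref{lemmaAxioms345} and (F4) is also off target. Both $\partial_f$ and $\partial_g$ are minima of sets of the form $\F_F(-)$, hence belong to $F$, so the Forbidden Pair Axiom (F1) already excludes the only configuration that obstructs the square, namely an adjacent pair for the composable faces $\partial_g\partial_f P\subset \partial_f P\subset P$; this is the one-line justification the paper uses, and no further case analysis is needed. Badness is a notion about two \emph{extensions} of a common face, not about a composable pair of faces, and Lemma \ref{lemmaAxioms345} does not apply here --- indeed $P\cup\partial_g\partial_f P=P$ since $\partial_g\partial_f P$ is already a face of $P$, so the proposed ``extension sequence to $P\cup\partial_g\partial_f P$'' gives nothing. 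Replacing these two steps by the argument above closes the proof and makes it coincide with the paper's.
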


\begin{proof}
\begin{enumerate}
	
\item The statement follows from the fact that minimal faces and minimal extensions are unique. 

\item Since the set $F$ does not contain an adjacent pair of elementary face maps, there is a tree $\partial_g P$ and the commutative square 

\[ \xymatrix{\partial_g\partial_f P \ar[d]_{\partial_f} \ar[rr]^{\partial_g} && \partial_f P \ar[d]^{\partial_f} \\  \partial_gP \ar[rr]_{\partial_g} && P.} \]

By Remark \ref{OrderComp} the total orders on $\F_F(P)$ and $\E_F(\partial_g\partial_gP)$ are compatible, so it is impossible that $\partial_f$ is the least element of 
$\F_F(P)$ and $\partial_g$ is the least element of $\E_F(\partial_g\partial_gP)$ as this would mean $\partial_f \leq \partial_g $ and $\partial_g\leq \partial_f$. 
\end{enumerate}
\end{proof}

\begin{lemma} \label{lema3} Every missing face is the domain or the codomain of a canonical extension.	
\end{lemma}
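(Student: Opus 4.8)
The plan is to use Axiom (F5) as the engine: for any missing face $P$, at least one of $\F_F(P)$ and $\E_F(P)$ is non-empty, and we want to promote this to the stronger conclusion that $P$ actually participates in a \emph{canonical} extension, either as its codomain or as its domain. I would split into the two cases according to which set (F5) guarantees to be non-empty, and in each case follow a minimization procedure down the face poset, using Corollary \ref{OrderComp} (the compatibility of the total orders) to keep the choices coherent, and Lemma \ref{disjoint can ext}\eqref{lema2} to guarantee the procedure terminates correctly.

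First suppose $\F_F(P) \neq \emptyset$. Let $\partial_f = \min \F_F(P)$. If in addition $\partial_f = \min \E_F(\partial_f P)$, then $(\partial_f P, P)$ is by definition a canonical extension and $P$ is its codomain, so we are done. Otherwise there is some $\partial_g \in \E_F(\partial_f P)$ with $\partial_g < \partial_f$; write $Q = \partial_f P$, so $Q$ is itself a missing face and $\E_F(Q) \neq \emptyset$. This reduces us to the second case but applied to a face lower in the poset than $P$, so by an inductive argument on the rank (number of vertices) of the face it suffices to handle the case $\E_F(P) \neq \emptyset$.

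So suppose $\E_F(P) \neq \emptyset$ and set $\partial_g = \min \E_F(P)$, say $\partial_g \colon P \to P'$. I claim $(P, P')$ is a canonical extension, i.e. that also $\partial_g = \min \F_F(P')$. Suppose not: then there is $\partial_h \in \F_F(P')$ with $\partial_h < \partial_g$ (in the compatible order), and $\partial_h \neq \partial_g$. Using the Forbidden Pair Axiom (F1), the pair $\{\partial_g, \partial_h\}$ of face maps into $P'$ is not mixed, adjacent or bad, so by the relevant proposition it is a good commuting pair: there is a common face $\partial_h \partial_g P' = \partial_g \partial_h P'$ and a square as in (F3). Now $\partial_g \colon \partial_h \partial_g P' \to \partial_h P'$ is an extension, and by Corollary \ref{OrderComp} its position relative to $\partial_h$ is the same in $\E_F(\partial_h\partial_g P')$ as the position of $\partial_h$ relative to $\partial_g$ is in $\F_F(P')$ — combining the two comparisons forces $\partial_h \le \partial_g$ and $\partial_g \le \partial_h$, hence $\partial_g = \partial_h$, a contradiction. (This is exactly the mechanism already used in the proof of Lemma \ref{disjoint can ext}\eqref{lema2}.) Therefore $\partial_g = \min \F_F(P')$ and $(P, P')$ is a canonical extension with $P$ as its domain.

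The main obstacle I anticipate is the bookkeeping around the \emph{bad} and \emph{adjacent} pairs in the second case: if the smaller face map $\partial_h \in \F_F(P')$ forms a bad pair with $\partial_g$, there is no commuting square at all and the Corollary \ref{OrderComp} argument does not directly apply. Here one must invoke Axioms (F1) and (F2): (F1) already excludes $\{\partial_g,\partial_h\}$ from being a bad or adjacent pair \emph{when both lie in $F$}, which is precisely our situation since $\partial_g,\partial_h\in\F_F(P')\subseteq F$ — so in fact the square always exists and the obstacle dissolves. The one remaining subtlety is the well-foundedness of the reduction in the first case (passing from $P$ to $Q=\partial_f P$): since $Q$ has strictly fewer vertices than $P$ and the face poset of $R$ is graded and finite, the induction is legitimate, and one should state the lemma's proof as an induction on rank from the outset to make this clean.
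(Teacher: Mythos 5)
Your overall plan (split according to Axiom (F5) and minimize) is close in spirit to the paper, but both halves have genuine gaps. In your first case, when $(\partial_f P,P)$ fails to be canonical you pass to $Q=\partial_f P$ and invoke induction; this is a non sequitur, because the statement for $Q$ --- that $Q$ is the domain or codomain of \emph{some} canonical extension --- gives no information about $P$ (the canonical extension it produces has codomain different from $P$, since $\min \E_F(Q)<\partial_f$). What is actually needed at this point is to transport the extension $\partial_k\in\E_F(\partial_f P)$ with $\partial_k<\partial_f$ across $\partial_f$: both are extensions of $\partial_f P$ lying in $F$, so Axiom (F1) rules out a bad pair and Axiom (F4) closes the square, yielding $\partial_k\in\E_F(P)$ --- i.e.\ non-emptiness of $\E_F(P)$ for $P$ itself, together with the crucial comparison $\min\E_F(P)\le\partial_k<\partial_f=\min\F_F(P)$. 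That comparison is exactly the ingredient your second case lacks.

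In the second case your contradiction is unsound, and the implication you assert is false in that generality. Corollary \ref{OrderComp} only says that the relation $\partial_h<\partial_g$ persists in every set $\F_F(-)$ or $\E_F(-)$ containing both faces; it never produces the reverse inequality, so there are no ``two comparisons'' to clash. In Lemma \ref{disjoint can ext} the second inequality comes from a \emph{second} minimality hypothesis (both pairs canonical), which you do not have here. Concretely, ``$\partial_g=\min\E_F(P)$ implies $\partial_g=\min\F_F(P')$'' fails: in the Segal core example on the linear tree $R$ with edges $e_0\prec e_1\prec e_2\prec e_3$, the missing face $P$ with edges $\{e_0,e_1,e_3\}$ has $\E_F(P)=\{\partial_{e_2}\colon P\to R\}$, yet $\min\F_F(R)=\partial_{e_1}<\partial_{e_2}$, so $(P,R)$ is not canonical (here the lemma holds because $P$ is the codomain of the canonical extension $(\{e_0,e_3\},P)$). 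The correct argument, as in the paper, assumes $P$ is not the codomain of a canonical extension, shows $\E_F(P)\neq\emptyset$ in both cases as above, and then, given $\partial_h\in\F_F(P')$ with $\partial_h<\partial_g$, uses the commuting square (no adjacent pair in $F$, plus (F3)) to place $\partial_h$ in $\F_F(P)$; this contradicts either $\F_F(P)=\emptyset$ or $\partial_h<\partial_g\le\partial_k<\partial_f=\min\F_F(P)$. Without that appeal to the hypothesis on $\F_F(P)$, your minimization of $\E_F(P)$ alone cannot close the argument.
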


\begin{proof}
Let $P$ be a missing face which is not a codomain and let us show that it is a domain of a canonical extension. First, we claim that $\E_F(P)\neq \emptyset$. 

If $\F_F(P)$ is empty, this is implied by The Existence Axiom. If $\F_F(P)$ is non-empty and $\partial_f =\min \F_F(P)$, then by the definition of canonical extensions, there exists an elementary face map $\partial_k \in \E_F(\partial_fP)$ such that $\partial_k < \partial_f$. Since $F$ does not contain a bad pair of extensions and it is closed under extensions, there is a commutative square of maps in $F$:

\[ \xymatrix{\partial_f P = \partial_k P' \ar[d]_{\partial_f} \ar[rr]^{\partial_k} &&  P' \ar[d]^{\partial_f} \\  P \ar[rr]_{\partial_k} && P'',} \]
which shows that $\E_F(P)$ is non-empty.    

Let $\partial_g=\min \E_F(P)$, $\partial_g: P\to P_1$.
We claim that $\partial_g=\min \F_F(P_1)$. First of all, since the set $F$ does not contain a mixed pair of face maps and it is closed under taking faces, we have the following commutative square of maps in $F$:  

\[ \xymatrix{\partial_f P \ar[d]_{\partial_f} \ar[rr]^{\partial_g} &&  \partial_fP_1 \ar[d]^{\partial_f} \\  P \ar[rr]_{\partial_g} && P_1} \]
and we conclude that $\partial_g \leqslant \partial_f$ holds. Furthermore, since the set $F$ does not contain an adjacent pair of face maps and it is closed under taking faces, for any map $\partial_h\in  \F_F({P_1})$ such that $\partial_h < \partial_g$ there would be a commutative square of maps in $F$ 
\[ \xymatrix{\partial_h P \ar[d]_{\partial_h} \ar[rr]^{\partial_g} &&  \partial_h P_1 \ar[d]^{\partial_h} \\  P \ar[rr]_{\partial_g} && P_1,} \]
which contradicts $\F_F(P)=\emptyset$ or $\partial_f=\min \F_F(P)$. Hence $\partial_g = \min \F_F(P_1)$ and $(P,P_1)$ is a canonical extension.
\end{proof}

\begin{lemma} \label{lema4}
Let $(\partial_f P, P)$ be a canonical extension. For any elementary face map $\partial_g : \partial_g P\to P$, with $g \neq f$, one of the following holds:
 \begin{itemize}
\item $\partial_g  P$ is not a missing face with respect to $A$;
\item $\partial_f\in \F_F(\partial_gP)$ and the pair $(\partial_f\partial_g P, \partial_g P)$ is a canonical extension;
\item $\card{\E_F(\partial_gP)}< \card{\E_F(\partial_f P)}$.
\end{itemize}
\end{lemma}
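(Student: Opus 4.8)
The plan is to establish a dichotomy. After disposing of the case where $\partial_g P$ is not a missing face (the first bullet, with nothing to prove), I will show that either $(\partial_f\partial_g P,\partial_g P)$ is a canonical extension, or there is an injection $\E_F(\partial_g P)\hookrightarrow\E_F(\partial_f P)$ which omits the minimal extension $\partial_f\colon\partial_f P\to P$; the first alternative is the second bullet, and the second gives $\card\E_F(\partial_g P)\le\card\E_F(\partial_f P)-1$, i.e. the third. The preliminary reduction uses Axiom (F1): the pair $\{\partial_f,\partial_g\}$ of faces of $P$ is either \emph{good}, in which case there is a commutative square with corners $\partial_f\partial_g P$, $\partial_f P$, $\partial_g P$, $P$ and $\partial_f\partial_g P=\partial_g\partial_f P$, or \emph{mixed}, in which case one of $f,g$ is an outer vertex and the other its unique attached inner edge. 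Since $\partial_f\in F$ and $F$ contains no mixed pair, in the mixed case $\partial_g\notin F$ and moreover $\partial_f$ is not a face of $\partial_g P$ at all, so the second bullet cannot hold and only the first or third is possible.

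Suppose first the pair is good and $\partial_g\in F$; I claim the second bullet holds. Applying (F3) to the square (both $\partial_f\colon\partial_f P\to P$ and $\partial_g\colon\partial_g P\to P$ lie in $F$) puts all four of its edges in $F$, so that $\partial_f\in\F_F(\partial_g P)$ and $\partial_g\colon\partial_f\partial_g P\to\partial_f P$, $\partial_f\colon\partial_f\partial_g P\to\partial_g P$ are in $F$. For $\partial_f=\min\F_F(\partial_g P)$: a strictly smaller $\partial_h\in\F_F(\partial_g P)$ sits in the codimension-two face $\partial_h\partial_g P$ of $P$, which in $\Sub(P)$ is covered either by a second codimension-one face (then (F3) pushes $\partial_h$ into $\F_F(P)$, contradicting $\partial_f=\min\F_F(P)$) or by $\partial_g P$ alone (then the configuration is an adjacent one and $\{\partial_g,\partial_h\}$ would be an adjacent pair inside $F$, contradicting (F1)). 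For $\partial_f=\min\E_F(\partial_f\partial_g P)$: a strictly smaller $\partial_j\in\E_F(\partial_f\partial_g P)$, paired with $\partial_g\colon\partial_f\partial_g P\to\partial_f P\in F$, is transported by (F3) into $\E_F(\partial_f P)$, where by Corollary \ref{OrderComp} it remains below $\partial_f$, contradicting $\partial_f=\min\E_F(\partial_f P)$. Hence $(\partial_f\partial_g P,\partial_g P)$ is a canonical extension.

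In all remaining cases (good with $\partial_g\notin F$, or mixed) I produce the injection. Given $\partial_k\colon\partial_g P\to Q$ in $\E_F(\partial_g P)$, form the face $P\cup Q$ of the ambient tree (Lemma \ref{lemmaAxioms345}), let $\partial_f(P\cup Q)$ be the face obtained from it by deleting exactly the edges and vertices that $\partial_f$ deletes from $P$, and define $\iota(\partial_k)$ to be the first step $\partial_f P\to P_1$ of an extension sequence of $\partial_f P$ up to $\partial_f(P\cup Q)$ (in the good case this is a single step). Axiom (F4) guarantees that this extension sequence consists of maps in $F$, and (F3) then places $\iota(\partial_k)$ in $\E_F(\partial_f P)$; the at most one extension $\partial_k$ forming a bad pair with $\partial_g$ (Axiom (F2), which applies since $\partial_g\notin F$) is absorbed into the same construction. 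The map $\iota$ is injective, because $Q$ and hence $\partial_k$ is recoverable from $\iota(\partial_k)$, and it never hits $\partial_f\colon\partial_f P\to P$, since the stuff deleted by $\partial_f$ is disjoint from the new stuff of $Q$, so $\partial_f(P\cup Q)\ne P$; therefore $\card\E_F(\partial_g P)<\card\E_F(\partial_f P)$. In the mixed case the same construction is carried out with the clean relation $\partial_f\partial_g=\partial_g\partial_f$ replaced throughout by the twisted relation $\partial_w\partial_v=\partial_{w\circ_e v}\partial_e$ and with the common face of $\partial_f P$ and $\partial_g P$ taken as in the discussion of mixed pairs.

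The hard part is the bookkeeping behind $\iota$: one must set it up uniformly for inner, top and bottom elementary face maps and, above all, handle the mixed and adjacent configurations, where a codimension-two face of the ambient tree can have a single codimension-one cover and the relevant dendroidal relation is twisted; one must check that $\iota$ is independent of the chosen extension sequences, that it genuinely lands among $F$-extensions (this is exactly what Axioms (F1), (F3) and (F4) supply), and that the solitary ``bad with $\partial_g$'' extension does not spoil injectivity (here Axiom (F2) is essential — this extension maps to something grafted onto the common leaf of $\partial_g$ and $\partial_k$, hence disjoint from all the other images). Finally one must confirm the case analysis is exhaustive: that whenever $(\partial_f\partial_g P,\partial_g P)$ fails to be a canonical extension, one is indeed in a situation where $\iota$ is strictly non-surjective.
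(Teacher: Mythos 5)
Your overall case split is the same as the paper's: if $\partial_g P$ is missing, treat $\partial_g\in F$ (second bullet) and $\partial_g\notin F$ or mixed (third bullet) separately, and your argument for the second bullet is essentially the paper's. One local slip there: your dichotomy via covers in $\Sub(P)$ is not the right one. In an adjacent configuration the codimension-two face $\partial_h\partial_g P$ is in general covered by \emph{two} codimension-one faces (e.g.\ $\partial_w\partial_v P=\partial_{w\circ_e v}\partial_e P$ is covered by both $\partial_v P$ and $\partial_e P$), so "there is a second cover" does not license (F3) -- the second cover need not be of the form $\partial_h P$, and in the adjacent case $h$ is not a face label of $P$ at all. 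The correct split is: either the composable pair $\{\partial_g,\partial_h\}$ is adjacent, which (F1) forbids since both maps lie in $F$, or it is not, in which case the square with labels $h$ and $g$ exists and (F3) puts $\partial_h$ in $\F_F(P)$, contradicting $\partial_f=\min\F_F(P)$. That is fixable on the spot.

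The genuine gap is in your third-bullet argument, the one-step injection $\iota\colon\E_F(\partial_g P)\to\E_F(\partial_f P)$ through "$\partial_f(P\cup Q)$". (i) That object need not exist: if $f$ is a top vertex of $P$ and the extension $\partial_k$ grafts a vertex onto one of the inputs of $f$, then $f$ is no longer a top vertex of $P\cup Q$, and deleting "exactly what $\partial_f$ deletes from $P$" does not produce a subtree. Excluding this interference is not free; it only follows after one has transported $\partial_k$ to an $F$-extension of $P$ (via (F4)) and observed that the resulting map would form an adjacent pair with $\partial_f$ inside $F$, contradicting (F1) -- i.e.\ after doing exactly the intermediate step you skip. (ii) Axiom (F4) as stated governs extension sequences of $P''$ to $P'\cup P''$ for two extensions of a \emph{common domain}; it says nothing directly about an extension sequence of $\partial_f P$ up to $\partial_f(P\cup Q)$, so "F4 guarantees this sequence lies in $F$" is unsubstantiated. (iii) The first step of an extension sequence towards $\partial_f(P\cup Q)$ does not determine $Q$ when the sequence has length greater than one, so your injectivity claim "$Q$ is recoverable from $\iota(\partial_k)$" fails as stated, and the mixed case is only waved at via "the twisted relation". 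The paper avoids all of this by factoring the count through $\E_F(P)$: first $\partial_k\mapsto$ the first step of an extension sequence of $P$ to $P\cup Q$ (here (F4) applies verbatim and (F2) gives injectivity, since at most one $\partial_k$ has its label changed), yielding $\card\E_F(\partial_g P)\leq\card\E_F(P)$; then each element of $\E_F(P)$ is pushed into $\E_F(\partial_f P)$ by the square (non-adjacent by (F1), then (F3)), yielding $\card\E_F(P)\leq\card\E_F(\partial_f P)$, with strictness because $\partial_f\colon\partial_f P\to P$ is not in the image; no separate treatment of the mixed case is needed. As written, your construction would have to re-derive precisely these transport facts, so the third bullet is not yet established.
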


\begin{proof} Let us assume that $\partial_g P$ is a missing face with respect to $A$. If $\partial_g \colon \partial_g P \to P$ is an element of $F$, then there is a commutative square
\[ \xymatrix{\partial_g\partial_f P \ar[d]_{\partial_f} \ar[rr]^{\partial_g} &&  \partial_fP \ar[d]^{\partial_f} \\  \partial_gP \ar[rr]_{\partial_g} && P} \]
with $\partial_f \in \F_F(\partial_g P)$ because $F$ does not contain an adjacent pair of face maps and it is closed under taking faces.
For every $\partial_h \colon \partial_h \partial_g  P \to \partial_g  P$ in $F$, there is a commutative diagram 

\[ \xymatrix{\partial_g\partial_f P \ar[d]_{\partial_f} \ar[rr]^{\partial_g} &&  \partial_fP \ar[d]^{\partial_f} \\  \partial_gP \ar[rr]_{\partial_g} && P \\
\partial_h\partial_gP \ar[u]_{\partial_h} \ar[rr]_{\partial_g} && \partial_hP \ar[u]_{\partial_h}
} \]
 in which all maps are in $F$, because $F$ does not contain a mixed pair of faces and it is closed under faces. Since $(\partial_f P, P)$ is a canonical extension, we have $\partial_f \leqslant \partial_h$ and we conclude that $\partial_f$ is the least element of $\F_F(\partial_gP)$. Similarly, for every extension $\partial_k\colon \partial_g \partial_f P=\partial_k P' \to P'$ in $F$, there is a commutative diagram 

\[ \xymatrix{
	 P'  \ar[rr]^{\partial_g} && P''   \\
	\partial_g\partial_f P \ar[u]^{\partial_k} \ar[d]_{\partial_f} \ar[rr]^{\partial_g} &&  \partial_fP \ar[u]_{\partial_k} \ar[d]^{\partial_f}  \\  \partial_gP \ar[rr]_{\partial_g} && P } \]
in which all maps are in $F$, because $F$ does not contain a bad pair of extensions and it is closed under extensions. Since $(\partial_f P, P)$ is a canonical extension, we have $\partial_f \leqslant \partial_k$ and we conclude that $\partial_f$ is the least element of $\E_F(\partial_g\partial_fP)$. Hence, $(\partial_f \partial_g P, \partial_g P)$ is a canonical extension.

Otherwise, assume $\partial_g \colon \partial_g P \to P$ is not an element of $F$. For any extension \[\partial_k \colon \partial_g P =\partial_k P'  \to P', \quad  \partial_k \in F,\] there is a face $P''$ of $R$ and an elementary face map \[\partial_{k_1} \colon P=\partial_{k_1}P''\to P'',\] which is in $F$ because $F$ is closed under extensions. Let us choose one such map $\partial_{k_1}$ for every $\partial_k \in \E_F(\partial_g P)$ and denote this assignment \[\psi: \E_F(\partial_g P) \to \E_F(P).\] The Bad Pair Axiom implies that there is at most one element $\partial_k \in \E_F(\partial_g P)$ such that $\psi(\partial_k) \neq \partial_k$, so $\psi$ is injective and we conclude \[\card \E_F(\partial_g P) \leq \card{\E_F(P)}.\]
Since $F$ does not contain an adjacent pair of faces and it is closed under faces it follows that for every element $\partial_{k_1}$ in $\E_F(P)$, there is an extension $\partial_{k_1}$ in $\E_F (\partial_{f}P)$. Hence,  \[\card \E_F(P) \leq \card \E_F(\partial_{f}P). \]
Since $\partial_f \colon \partial_f P \to P$ and $\partial_g \colon \partial_g P \to P$ are elementary face maps of the same tree $P$, there is an edge of $P$ which appears in $\partial_gP$ but does not appear in $\partial_f P$. Hence $\partial_f$ is not an element of $\E_F(\partial_g P)$ and we have $\card \E_F(\partial_g P) < \card \E_F(\partial_f P)$.
\end{proof}

\begin{theorem} \label{theorem method}
 Let $R$ be a tree and $A$ a dendroidal subset of $\Omega [R]$ such that there exists an extension set $F$. Then, the inclusion $A \to \Omega [R]$ is a composition of pushouts of horns $\Lambda^f P \to P$ with $\partial_f \in F$.

 Hence, the inclusion $A \to \Omega [R]$ is a stable anodyne extension, which is moreover a covariant anodyne extension if all elements of $F$ are either inner or top elementary face maps and an operadic anodyne extension if all elements of $F$ are inner elementary face maps.
\end{theorem}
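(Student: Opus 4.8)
The plan is to realise $A\to\Omega[R]$ as the finite filtration $A=A_0\subset A_1\subset\cdots\subset A_N=\Omega[R]$ anticipated above, whose building blocks are the canonical extensions. The organisational backbone is provided by two results already proved: by Lemma~\ref{lema3} every missing face of $R$ is the domain or the codomain of some canonical extension, while parts~(1) and~(2) of Lemma~\ref{disjoint can ext} say that it is so for a \emph{unique} canonical extension and that it cannot be both a domain and a codomain. Consequently the missing faces of $R$ are partitioned into the domains of canonical extensions and the codomains of canonical extensions, and a given canonical extension $(\partial_f P,P)$ is responsible for exactly the two missing faces $P$ and $\partial_f P$. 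Note also that $\partial_f=\min\F_F(P)$ lies in $F$ for every canonical extension.

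Besides this partition I would use the standard local fact about horns of dendroidal sets: for a face $P$ of $R$ with at least one vertex and an elementary face $\partial_f\colon\partial_f P\to P$, the only non-degenerate cells of $\Omega[P]$ not contained in the horn $\Lambda^f[P]$ are $P$ and $\partial_f P$. Equivalently, whenever a subobject $B\subseteq\Omega[R]$ has $\Lambda^f[P]\subseteq B$ but $P\not\subseteq B$ and $\partial_f P\not\subseteq B$, the subobject $B\cup\Omega[P]$ is the pushout of $B\leftarrow\Lambda^f[P]\to\Omega[P]$ and is obtained from $B$ by adjoining only the cells $P$ and $\partial_f P$. Next, I would well-order the canonical extensions lexicographically by the pair $\bigl(\text{number of vertices of the codomain }P,\ \card\E_F(\partial_f P)\bigr)$, allowing ties, collect the equal-rank ones into finitely many batches $B_1,\dots,B_N$ (finitely many because $\Sub(R)$ is finite), and set $A_n:=A\cup\{\,\Omega[P]\mid(\partial_f P,P)\in B_1\cup\cdots\cup B_n\,\}$.

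The core of the argument is then to prove, by induction on $n$, that for every $(\partial_f P,P)\in B_{n+1}$ one has (i) $\Lambda^f[P]\subseteq A_n$ and (ii) $P\not\subseteq A_n$ and $\partial_f P\not\subseteq A_n$. Given (i) and (ii), the local fact exhibits $A_n\subseteq A_{n+1}$ as an iterated pushout --- one step per member of $B_{n+1}$, in any order --- of horn inclusions $\Lambda^f[P]\to\Omega[P]$ with $\partial_f\in F$, and the inductive description of $A_n$ (the faces in $A$ together with the cells $P,\partial_f P$ of the canonical extensions in $B_1,\dots,B_n$) is maintained. Since by Lemma~\ref{lema3} every missing face is eventually adjoined, $A_N=\Omega[R]$, and the theorem follows: stable (resp.\ covariant, operadic) anodyne extensions form a class closed under pushouts and finite compositions and containing all (resp.\ all inner and top, all inner) horn inclusions, so when every element of $F$ is inner or top (resp.\ inner) only the corresponding horns appear.

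Part (ii) is immediate from the partition of missing faces, since the canonical extension responsible for $P$ and for $\partial_f P$ is $(\partial_f P,P)\in B_{n+1}$ itself. Part (i) --- which I expect to be the main obstacle --- is where Lemma~\ref{lema4} is used. As $A_n$ is a subobject and hence closed under passing to faces, it is enough to check that every maximal cell $\partial_g P$, $g\neq f$, of $\Lambda^f[P]$ belongs to $A_n$; and Lemma~\ref{lema4} says that either $\partial_g P$ is not missing (so $\partial_g P\subseteq A\subseteq A_n$) or $\partial_g P$ is the domain or codomain of a canonical extension of strictly smaller rank, hence already adjoined. Concretely: if $\partial_g P$ is a codomain, its canonical extension has a codomain with one vertex fewer than $P$, so a strictly smaller first coordinate; and if $\partial_g P$ is a domain, then no canonical extension has codomain $\partial_g P$, so $(\partial_f\partial_g P,\partial_g P)$ is not canonical and the third alternative of Lemma~\ref{lema4} forces $\card\E_F(\partial_g P)<\card\E_F(\partial_f P)$, so the canonical extension with domain $\partial_g P$ has the same first coordinate as $(\partial_f P,P)$ but a strictly smaller second one. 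The same inequality shows that distinct members of one batch do not interfere: a count of vertices rules out every possible coincidence among their distinguished cells except an equality $\partial_{f'}P'=\partial_g P$ of a domain with a horn face, which would again force strictly smaller rank --- impossible within a batch. Hence the within-batch pushouts may be performed one at a time in any order, the filtration is legitimate, and the only genuinely delicate point in the whole argument is the choice of the rank function: once it makes every horn face strictly smaller via Lemma~\ref{lema4}, everything else is formal.
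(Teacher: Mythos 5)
Your proposal is correct and is essentially the paper's own argument: the same partition of missing faces via Lemmas \ref{disjoint can ext} and \ref{lema3}, the same double rank (vertex count of the canonical extension, shifted by one, together with $\card\E_F$ of its domain), and the same use of Lemma \ref{lema4} to place every horn face $\partial_g P$ in a strictly earlier stage, so that each batch is adjoined by pushouts of horns $\Lambda^f[P]\to\Omega[P]$ with $\partial_f\in F$. The only difference is presentational: you spell out the local fact that $\Omega[P]\setminus\Lambda^f[P]$ contributes only the cells $P$ and $\partial_f P$ and the within-batch disjointness, which the paper treats tersely via the mutual disjointness of canonical extensions.
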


\begin{proof}
By Lemma \ref{lema3} every missing face of $R$ with respect to $A$ is either the first or the second component of a canonical extension $(P,P')$. By Lemma \ref{disjoint can ext}, all such pairs are mutually disjoint.

Let $\mathcal{P}_{n,c}$ be the family of all canonical extensions $(\partial_f P, P)$ such that $\partial_f P$ has $n$ vertices and $\card \E_F(\partial_f P)=c$. Let $m$ be the minimal number of vertices over all missing faces. Let $d$ be the minimal cardinality of the set $\E_F(P)$ over all missing faces $P$ with number of vertices being $m$. We define $A_{m,d}$ to be the union of the dendroidal set $A$ with the representables of all missing faces $P$ and their canonical extensions such that $P$ has $m$ vertices and $\card\E_F(P)=d$.
For notational convenience, we define $A_{n,c}=A_{m,d}$ if $1\leq n<m$ or if $n=m$ and $1\leq c<d$. We inductively define dendroidal sets $A_{n,c}$ as the union of
\begin{itemize}
\item all dendroidal sets $A_{n', c'}$ such that $n'< n$,
\item all dendroidal sets $A_{n', c'}$ such that $n'=n$ and $c' < c$, and
\item all representables $\Omega[P]$ and $\Omega[\partial_f P]$ such that $(\partial_f P, P)\in \mathcal{P}_{n,c}$.
\end{itemize}
For a fixed $n\geq 1$, if $c$ is the maximum of $\card \E_F(P)$ over all faces $P$ with $n$ vertices, we define $A_{n+1,0}=A_{n,c}$. Lemma \ref{lema4} implies that there is an inclusion
$$\coprod_{(\partial_f P, P) \in \mathcal{P}_{n,c}} \Lambda^f P \to A_{n,c-1}.$$
Since all canonical extensions are mutually disjoint, for any $(\partial_f P, P)\in \mathcal{P}_{n,c}$ the representable $\Omega[P]$ does not factor through $A_{n,c-1}$ so we have a pushout diagram $$\xymatrix{\ds \coprod_{(\partial_f P, P) \in \mathcal{P}_{n,c}} \Lambda^f [P]    \ar[d] \ar[rr] && A_{n,c-1} \ar[d] \\ \ds \coprod_{(\partial_f P, P) \in \mathcal{P}_{n,c}} \Omega[P] \ar[rr] && A_{n,c}.}$$
This proves the statement. 
\end{proof}

\section{Extension sets for shuffles of trees and the pushout-product property}

In this section we consider the tensor product of trees that yields a monoidal structure on the category of dendroidal sets. We will see that, as for linear orders in the theory of simplicial sets, the product of trees is a union of trees called \emph{shuffles}. We provide two examples of extension sets for faces of shuffles of two trees. These auxiliary results will be used in the last subsection multiple times to prove the pushout-product property for the stable model structure. 

\subsection{Tensor product of trees}

The category of (coloured) operads has a tensor product $\otimes_{BV}$, called the Boardman-Vogt tensor product, making it a closed symmetric monoidal category. This monoidal structure induces a tensor product on the category of dendroidal sets such that $$\Omega[S]\otimes \Omega[T] = N_d (\Omega(S) \otimes_{BV} \Omega(T))$$
for any two trees $S$ and $T$.  Details can be found in \cite{inn Kan in dSet}.
These tensor products are part of a colax symmetric monoidal structure, as described in \cite{Equivalence}. We do not go into details, as we use only binary tensor products in this article. 

The tensor product of two representables decomposes as a union of representables, called shuffles in this context. We repeat basic definitions and results needed for our applications and refer the reader for further details to Chapter 4 of \cite{MoerBar} and a more recent overview \cite{HoffMoer}. 

\begin{definition}  \label{def shuffle}
	Let $S$ and $T$ be trees. A \emph{shuffle of $S$ and $T$} is a tree $R$ such that:
	\begin{itemize}
		\item the set of edges of $R$ is a subset of $S \times T$,
		\item the root of $R$ is  $(r_S, r_T)$,
		\item the set of leaves $L(R)$ of $R$ is equal to the set $L(S)\times L(T)$.
		\item if $(s,t)$ is an edge of $R$ which is not a leaf, then either the inputs of the vertex above $(s,t)$ are of the form $(s_1, t)$, \ldots, $(s_m, t)$ where $s_1, \ldots, s_m$ are inputs of the vertex above $s$ in $S$, or these inputs are of the form $(s,t_1)$, \ldots, $(s, t_n)$ where $t_1, \ldots, t_n$ are inputs of the vertex above $t$ in $T$.
	\end{itemize}
\end{definition}

\begin{remark}
	We will call the vertices of the form $v\otimes t$ \emph{white} and the vertices of the form $s\otimes w$ \emph{black}. We will draw:
	$$
	\xymatrix@R=3pt@C=8pt{
		&& &&&&& \\
		&& _{(s_1, t)} & _{...} &_{(s_n, t)}&  \\
		&&&*{\quad \, \, \circ_{v\otimes t}} \ar@{-}[uul]  \ar@{-}[uur]   &&\\
		&&&*=0{}\ar@{-}[u] && \\
		&&&*=0{}\ar@{-}[u]^{(s , t)} &&
	}
	\quad
	\xymatrix@R=3pt@C=8pt{
		&& &&&&& \\
		&& _{(s, t_1)} &_{...} &_{(s, t_m)}&  \\
		&&&*=0{\quad \, \, \bullet_{s\otimes w}} \ar@{-}[uul]  \ar@{-}[uur]   &&\\
		&&&*=0{}\ar@{-}[u]&& \\
		&&&*=0{}\ar@{-}[u]^{(s , t)} &&
	}
	$$
\end{remark}

\begin{remark}
As discussed in Section 2 of \cite{HoffMoer}, the fourth condition of Definition \ref{def shuffle} can be replaced by the condition: 
\begin{itemize}
\item for any two leaves $s$ of $S$ and $t$ of $T$, the branch from leaf $(s,t)$ in $R$ to the root of $R$ is a sequence of edges 
\[
(s,t)=(s_1, t_1), (s_2, t_2), \ldots, (s_k, t_k)=(r_S, r_T)
\] 
such that $s_i=s_{i-1}$ and $t_{i-1}$ and $t_i$ are consecutive edges in $T$ or $t_i=t_{i-1}$ and $s_{i-1}$ and $s_i$ are consecutive edges in $S$.

\end{itemize}
\end{remark}

\begin{example} \label{example shuffle}
	The following tree
	\[
	\xymatrix{
		&&&&&&&&& \\
		*=0{\bullet} && *=0{\bullet}   & *=<3.1pt>{\circ} \ar@{-}[ul]_{(4,d)} \ar@{-}[ur]_{(5,d)} &&&&&&& \\
		&*=<3.1pt>{\circ} \ar@{-}[ul]_{(4,b)} \ar@{-}[ur]_{(5,b)} &  & *=0{\bullet} \ar@{-}[u]_{(1,d)}  & *=0{\bullet}  && *=0{\bullet}  \ar@{-}[u]_{(2,d)}   &*=0{\bullet}  && *=0{\bullet}  \ar@{-}[u]_{(3,d)}   & \\
		&& *=0{\bullet} \ar@{-}[ul]_{(1,b)} \ar@{-}[ur]_{(1,c)} &&& *=0{\bullet} \ar@{-}[ul]_{(2,b)} \ar@{-}[ur]_{(2,c)} &&&  *=0{\bullet} \ar@{-}[ul]_{(3,b)} \ar@{-}[ur]_{(3,c)} &&  \\
		&&&&&*=<3.1pt>{\circ} \ar@{-}[ulll]_{(1,a)} \ar@{-}[u]_{(2,a)} \ar@{-}[urrr]_{(3,a)}  & &&\\
		&&&&&*=0{}\ar@{-}[u]^{(0,a)} &&
	}
	\]
	is an example of a shuffle of the trees
	\[
	\xymatrix@R=3pt@C=8pt{
		& & &&&&& \\
		& & &&&&& \\
		& *=<3.1pt>{\circ}\ar@{-}[ddrr]_1 *{}\ar@{-}[uul]^4 *{}\ar@{-}[uur]_5  &&   && & \\
		S= && &&&&& \\
		&&&*=<3.1pt>{\circ} *{}\ar@{-}[uu]_2 *{}\ar@{-}[uurr]_3   &&\\
		&&&*=0{}\ar@{-}[u]&& \\
		&&&*=0{}\ar@{-}[u]^0 &&
	}
	\xymatrix@R=3pt@C=8pt{
		&&  \\
		&&  \\
		&&  \\
		\text{ and } 	&&  \\
		&&  \\
		&&  \\
		&&  \\
	}	
	\xymatrix@R=3pt@C=8pt{
		&& &&&&& \\
		&& &&&&& \\
		&& *=0{\bullet} && *=0{\bullet} \ar@{-}[uu]_d &&  \\
		T= 	&& &&&&& \\
		&&&*=0{\bullet} \ar@{-}[uul]_b  \ar@{-}[uur]_c   &&\\
		&&&*=0{}\ar@{-}[u]&& \\
		&&&*=0{}\ar@{-}[u]^{a} &&
	}
	\]
\end{example}

\begin{prop}[\cite{inn Kan in dSet}, Lemma 9.5]
	Every shuffle $R$ of $S$ and $T$ comes with a canonical monomorphism $m:\Omega[R]\to \Omega[S]\otimes \Omega[T]$. If $R_i$, $i=1,...,N$ are all  shuffles of $S$ and $T$ then the dendroidal set $\Omega[S]\otimes \Omega[T]$ is isomorphic to the union of all $\Omega[R_i]$, i.e.
	$$\Omega[T]  \otimes \Omega[S] \cong \bigcup_{i=1}^N \Omega[R_i].$$
\end{prop}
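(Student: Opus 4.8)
The plan is to pass through the dendroidal nerve and reduce both assertions to statements about the operad $\Omega(S)\otimes_{BV}\Omega(T)$. Since the Yoneda embedding of $\Omega$ factors through the full inclusion $\Omega\hookrightarrow\Oper$ followed by $N_d$, for every tree $U$ one has $\Omega[U]=N_d(\Omega(U))$; moreover $N_d$ is fully faithful and, being a right adjoint, preserves monomorphisms. Thus it suffices to produce, for each shuffle $R$, a monomorphism of operads $\mu_R\colon\Omega(R)\to\Omega(S)\otimes_{BV}\Omega(T)$ (its image under $N_d$ is then the canonical $m\colon\Omega[R]\to\Omega[S]\otimes\Omega[T]$ of the statement); and then, since a sub-presheaf of $\Omega[S]\otimes\Omega[T]$ may be tested component-wise, the equality $\Omega[S]\otimes\Omega[T]=\bigcup_i\Omega[R_i]$ becomes precisely the assertion that every morphism of operads $\phi\colon\Omega(P)\to\Omega(S)\otimes_{BV}\Omega(T)$, for a tree $P$, factors through $\mu_{R_i}$ for some shuffle $R_i$.

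First I would construct $\mu_R$. Because $\Omega(R)$ is the free operad on the set of vertices of $R$, it is enough to prescribe $\mu_R$ on colours and on vertices. On colours, send an edge $(s,t)$ of $R$ (these form a subset of $S\times T$) to the colour $(s,t)$ of $\Omega(S)\otimes_{BV}\Omega(T)$; on a white vertex $v\otimes t$ of $R$, send it to the elementary operation $v\otimes t$ of the Boardman--Vogt tensor product (the image of the generator $v$ of $\Omega(S)$ along the colour $t$), and symmetrically send a black vertex $s\otimes w$ to $s\otimes w$. The fourth clause of Definition~\ref{def shuffle}---that the inputs of the vertex above an edge $(s,t)$ of $R$ are either $(s_1,t),\dots,(s_m,t)$ with the $s_i$ the inputs above $s$ in $S$, or $(s,t_1),\dots,(s,t_n)$ with the $t_j$ the inputs above $t$ in $T$---is exactly what makes these profiles match, so the assignment extends uniquely to a morphism of operads $\mu_R$. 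It is a monomorphism: it is injective on colours since the edges of $R$ are pairwise distinct elements of $S\times T$, and it is automatically injective on each operation set because $\Omega(R)(u_1,\dots,u_k;u)$ is a singleton or empty.

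It remains to prove that an arbitrary morphism of operads $\phi\colon\Omega(P)\to\Omega(S)\otimes_{BV}\Omega(T)$ factors through some $\mu_{R_i}$. Write the colour part of $\phi$ as $p\mapsto(\sigma p,\tau p)$. The key input is the normal-form description of operations in the Boardman--Vogt tensor product: every operation of $\Omega(S)\otimes_{BV}\Omega(T)$ is a composite of the elementary operations $v\otimes t$ (for $v$ a vertex of $S$) and $s\otimes w$ (for $w$ a vertex of $T$), and---using the interchange relation to arrange, along each branch, all ``$S$-steps'' in the order dictated by $S$ and all ``$T$-steps'' in the order dictated by $T$---the underlying tree of a normalized composite is a connected subtree of one of the shuffles $R_i$. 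Applying this to each operation $\phi(v)$, $v$ a vertex of $P$, and using that $P$ is connected and $\phi$ functorial (so these operations graft together compatibly inside the tensor product), one checks that the finitely many subtrees so obtained all sit inside a single shuffle $R_i$; reading off the induced assignment on edges and vertices of $P$ yields a morphism $P\to R_i$ in $\Omega$ with $\mu_{R_i}\circ(P\to R_i)=\phi$.

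The construction of $m$ and the verification that it is monic are formal; the main obstacle is the combinatorial content of the last step---making precise that an arbitrary composite of the generators of $\Omega(S)\otimes_{BV}\Omega(T)$ ``percolates'' into a subtree of a single shuffle, and that the shuffles together exhaust the tensor product. This is exactly \cite{inn Kan in dSet}, Lemma~9.5 (treated in detail in \cite{MoerBar}, Chapter~4), and we take it for granted here.
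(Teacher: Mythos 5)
The paper offers no proof of this proposition at all: it is recalled verbatim from \cite{inn Kan in dSet}, Lemma 9.5, with the combinatorial details delegated to Chapter 4 of \cite{MoerBar} and to \cite{HoffMoer}, which is exactly what your proposal does for its one substantive step (the exhaustion of $\Omega[S]\otimes\Omega[T]$ by the shuffles), so your treatment matches the paper's. The extra material you supply --- constructing $\mu_R$ on generators using freeness of $\Omega(R)$ and clause four of Definition \ref{def shuffle}, checking injectivity on colours, and transporting along the fully faithful, mono-preserving $N_d$ --- is correct and consistent with how the cited sources set things up, though of course it does not make the argument self-contained, since the decisive percolation/normal-form step is cited back to the very lemma being proved.
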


In this context we also call an edge of a tree $T$ a colour of $T$. Let $P$ be a face of a shuffle $R$ of $S$ and $T$. We say that a colour $t$ of a tree $T$ \emph{appears} in $P$ if there is at least one edge $t \otimes s$ of $P$ for some colour $s$ of $S$. 

\begin{definition}
If there is a vertex $v=\{s_1,...,s_m\}$ of $S$ with output $s$, a vertex $w=\{t_1,...,t_n\}$ of $T$ with output $t$ and a shuffle $R$ such that $v\otimes t=\{(s_1,t),...,(s_m,t)\}$ and $s_i \otimes w=\{(s_i, t_1),....,(s_i,t_n)\}$ are vertices of $R$, then we form a new shuffle $R'$ which is a tree with
\begin{itemize}
	\item the underlying set  $R'= \{(s, t_1),..., (s, t_n)\} \cup R\setminus \{(s_1,t),...,(s_m,t)\}$,
	\item the unique partial order determined by $(s,t)\leq (s', t')$ in $R'$ if and only if $s\leq s'$ in $S$ and $t\leq t'$ in $T$,
	\item the set of leaves $L(R')$ of $R'$ being the same as the set leaves $L(R)$ of $R$.
\end{itemize}

We say that $R'$ is obtained from $R$ by a \emph{percolation step}.
\end{definition}

\begin{example} \label{percolation}
	Here is an example of a percolation step written in the form $R \to R'$ for the case where $S$ and $T$ are corollas with two and three inputs respectively: 
	$$ 
	\xymatrix@R=10pt@C=6pt{
		& & &&&&&&& \\
		& & &&&&&&& \\
		& *=<3.1pt>{\quad \,\,\,\,\, \circ_{v\otimes t_1}} \ar@{-}[uul]  \ar@{-}[uur] && *=<3.1pt>{\quad \,\,\,\,\,  \circ_{v\otimes t_2}} \ar@{-}[uur]  \ar@{-}[uul] && *=<3.1pt>{\quad \,\,\,\,\,  \circ_{v\otimes t_3}} \ar@{-}[uul]  \ar@{-}[uur]  && \\
		& _{(s, t_1)} & &_{(s, t_2)}&&_{(s, t_3)}&& && \\
		&&&*{\quad \,\,\, \bullet_{s\otimes w}}  \ar@{-}[uull] \ar@{-}[uu] \ar@{-}[uurr]   &&&&\\
		&&&&&&&&& \\
		&&&*=0{}\ar@{-}[uu]^{(s, t)} &&&&
	}
	\xymatrix@R=10pt@C=6pt{ \\ \\ \\ \ar[rrr] & && \\ }
	\xymatrix@R=10pt@C=6pt{
		&&&&&&& \\
		&&&&&&& \\
		&& *{\quad \quad  \bullet_{s_1\otimes w}} \ar@{-}[uul]  \ar@{-}[uu] \ar@{-}[uur] && *{\quad \quad  \bullet_{s_2\otimes w}} \ar@{-}[uu]  \ar@{-}[uur] \ar@{-}[uul]  & \\
		&&&&& \\
		&&&*=<3.1pt>{\quad \, \, \circ_{v\otimes t}}  \ar@{-}[uul]^{(s_1, t)}  \ar@{-}[uur]_{(s_2, t)}   &&\\
		&&&&& \\
		&&&*=0{}\ar@{-}[uu]^{(s, t)} &&
	}
	$$
	A particular case is a percolation of a stump, the only case where a vertex of type $s\otimes w$ vanishes. Here is an example when $S$ is a corolla with no inputs: 
	$$  \xymatrix@R=10pt@C=12pt{
		&& *=0{\quad \quad  \circ_{v\otimes t_1}}  && *=0{\quad \quad  \circ_{v\otimes t_2}}  & \\
		&&&&& \\
		&&&*{\quad \,\, \bullet_{s\otimes w}}  \ar@{-}[uul]^{(s, t_1)}  \ar@{-}[uur]_{(s, t_2)}   &&\\
		&&&&& \\
		&&&*=0{}\ar@{-}[uu]^{(s, t)} &&
	}
	\xymatrix@R=10pt@C=6pt{ \\ \\ \\  &&&& \ar[rrr]  &&& \\ }
	\xymatrix@R=10pt@C=12pt{
		& & &&&&&&& \\
		& & &&&&&&& \\
		&&&*=0{\quad \,\,\,  \circ_{v\otimes t}}    &&&&\\
		&&&&&&&&& \\
		&&&*=0{}\ar@{-}[uu]^{(s, t)} &&&&
	}
	$$
	If $s$ is the output of a stump $v$ of $S$ and $t$ is the output of a stump $w$ of $T$ (i.e. they are minimal elements which are not leaves), then $(s,t)$ is a also the output of the black stump $s\otimes w$ which can turn into the white stump  $v\otimes t$ by a percolation step. Example in which $S$ and $T$ are both corollas with no inputs:
	$$
	\xymatrix@R=10pt@C=12pt{
		&&&*=0{\quad \,\,\, \bullet_{s\otimes w}}    &&&&\\
		&&&&&&&&& \\
		&&&*=0{}\ar@{-}[uu]^{(s, t)} &&&&
	}
	\xymatrix@R=10pt@C=6pt{ \\ \ar[rrr] & && \\ }
	\xymatrix@R=10pt@C=12pt{
		&&&*=0{\quad \,\,\, \circ_{v\otimes t}}    &&&&\\
		&&&&&&&&& \\
		&&&*=0{}\ar@{-}[uu]^{(s, t)} &&&&
	}
	$$
\end{example}

Let $S$ and $T$ be trees, let $r_S$ and $r_T$ be the roots of $S$ and $T$ respectively, and $L(T)=\{l_1,...,l_m\}$ be the set of leaves of $T$. We let $S\otimes t$ (resp. $s\otimes T$) be a tree isomorphic to $S$ (resp. $T$) with the underlying set $S\times \{t\}$ (resp. $\{s\}\times T$). We may construct all shuffles of $S$ and $T$ inductively using percolation steps. 
We define $$R_1 = (r_S \otimes T) \circ (S \otimes l_1, S \otimes l_2, \ldots S \otimes l_m)$$ to be the shuffle  obtained by grafting copies of $S$ on top of $T$.

\[
\xy<0.1cm, 0cm>:
(0,0)*=0{}="1";
(0,10)*=0{}="2";
(-0,20)*=0{}="a";
(-5,20)*=0{}="3";
(5,20)*=0{}="4";
(-20,30)*=0{}="5";
(20,30)*=0{}="6";
(0,30)*=0{}="b";
(0,16)*=0{T};
"1";"2" **\dir{-};
"2";"3" **\dir{-};
"2";"4" **\dir{-};
"3";"4" **\dir{-};
"3";"5" **\dir{-};
"4";"6" **\dir{-};
"a";"b" **\dir{-};
(-25,35)*=0{}="9";
(-15,35)*=0{}="10";
(-5,35)*=0{}="11";
(5,35)*=0{}="12";
(15,35)*=0{}="15";
(25,35)*=0{}="16";
"5";"9" **\dir{-};
"5";"10" **\dir{-};
"9";"10" **\dir{-};
"b";"11" **\dir{-};
"b";"12" **\dir{-};
"11";"12" **\dir{-};
"6";"15" **\dir{-};
"6";"16" **\dir{-};
"15";"16" **\dir{-};
(-20,33)*=0{S};
(0,33)*=0{S};
(20,33)*=0{S};
\endxy\]

Note that the number of vertices of $R_1$ is finite, so there are finitely many shuffles of $S$ and $T$ and we obtain all shuffles from $R_1$ by letting the white vertices percolate towards the root in all possible ways.

\begin{definition} \label{shuffles}
	If a shuffle $R'$ is obtained from $R$ by a percolation step we write $R\preceq R'$ and say that $R$ is an \emph{immediate predecessor} of $R'$.  This defines a natural partial order on the set of all shuffles of $S$ and $T$ with $R_1$ being the unique minimal element. We call this the \emph{right percolation poset} of $S$ and $T$. Note that there is a unique maximal element in this partial set, namely the shuffle $R_N$ obtained by grafting copies of $T$ on top of $S$. The reverse partial set is called the \emph{left percolation poset}. 
\end{definition}

\subsection{Extension sets}
For the whole section, let us consider two trees $S$ and $T$ such that they are both open or one of them is linear. We assume that $S$ has a root vertex $v$ with inputs $l_1$, $l_2$, \ldots, $l_m$ such that $l_2$, $l_3$, \ldots, $l_m$ are leaves. Hence $S$ has a root face $\partial_v S$. We denote by $w$ the bottom vertex of $T$, and by $r_S$ and $r_T$ the root of $S$ and $T$, respectively. We fix an arbitrary total order $R_1 \preceq R_2 \preceq ... \preceq R_N$ extending the right percolation partial order on the set of shuffles.

\begin{prop} \label{prop extension 1}
	Let $R_i$ be  a shuffle of $S\otimes T$ with the bottom vertex $r_S\otimes w$. Let $A_0$ be the dendroidal subset of $\Omega[R_i]$ such that the missing faces with respect to $A_0$ are those for which
	\begin{itemize}
		\item all colours of $T$ appear, 
		\item all colours of $\partial_{v}S$ appear and 
		\item there is at least one edge which is not an edge of $R_j$, for each $j<i$. 
\end{itemize}
The inclusion $A_0 \to \Omega[R_i]$ is an inner dendroidal anodyne extension. 
\end{prop}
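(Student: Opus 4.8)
The plan is to apply Theorem \ref{theorem method} directly: I will exhibit an explicit extension set $F$ with respect to $A_0$ consisting entirely of inner elementary face maps, so that the inclusion $A_0 \to \Omega[R_i]$ is an inner (operadic) dendroidal anodyne extension. The natural candidate, mirroring Example \ref{SegalCore}, is
\[
F = \{\partial_e \colon \partial_e P \to P \mid P \in \Sub(R_i), \ P, \partial_e P \not\subseteq A_0, \ e \text{ an inner edge of } P\},
\]
possibly restricted further: one wants to contract precisely those inner edges whose removal does not destroy the "missing" status in a way that obstructs the axioms. The three defining conditions for a missing face (all colours of $T$ appear, all colours of $\partial_v S$ appear, and there is an edge not lying in any earlier shuffle $R_j$) must be analyzed to see which inner edges $e$ of a missing face $P$ have the property that both $P$ and $\partial_e P$ are still missing. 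Contracting an inner edge $e=(s,t)$ of $P$ identifies the two edges adjacent to it; it removes the colour pair $(s,t)$ from $P$ but all $T$-colours and $\partial_v S$-colours still appear as long as $(s,t)$ is not the unique edge carrying colour $t$ or the unique edge carrying a colour of $\partial_v S$ — and since $R_i$ has bottom vertex $r_S\otimes w$, the root region is controlled, so these "unique-carrier" edges can be identified and excluded from $F$ if necessary.

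First I would check Axioms (F1)--(F4), which, exactly as in Example \ref{SegalCore}, are essentially automatic once $F$ contains only inner elementary face maps: no inner face map participates in a mixed, adjacent, or bad pair (those always involve an outer vertex), so (F1) holds; (F2) is vacuous since there are no bad pairs in $F$; (F3) is the statement that the four inner contractions in a commuting square are simultaneously defined, which follows from the dendroidal relations for non-mixed pairs (the relevant Proposition in Section 2); and (F4) reduces to the fact that $P'\cup P''$ for two inner extensions of $P$ is obtained by the obvious two-step extension $r=1$ with $f_1 = f$, $g_1 = g$ (Remark \ref{extension sequence remark}), all of which are inner and hence in $F$. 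The one subtlety is to confirm that if $P \not\subseteq A_0$ and $\partial_e P \not\subseteq A_0$ then every intermediate face in these squares is also missing; this is where the precise form of the three bullet-point conditions enters, and I would verify it by noting that the conditions are "upward closed" under adding edges (if $Q$ is missing and $Q \subseteq Q' \subseteq R_i$ with $Q'$ obtained by un-contracting inner edges, then $Q'$ is missing too) together with the fact that $F$ only ever contracts edges whose contraction preserves the three conditions.

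The real work is Axiom (F5): for every missing face $P$, at least one of $\F_F(P)$, $\E_F(P)$ is non-empty. I would argue as follows. If $P$ has an inner edge $e$ such that $\partial_e P$ is still missing, then $\partial_e \in \F_F(P)$ and we are done. If $P$ has no such inner edge, I claim $\E_F(P)$ is non-empty, i.e. $P$ arises from a larger missing face $P'$ by contracting an inner edge. The third condition — "there is an edge not in any $R_j$, $j<i$" — is the key leverage: a missing face $P$ must contain a genuine edge of $R_i$ not present in earlier shuffles, and I would show that such an edge can be "expanded" (un-contracted, i.e. $P = \partial_e P'$ for an inner $e$ of a face $P'$ of $R_i$) while keeping all three conditions; typically $P'$ is $P$ with one extra inner edge inserted so that the new witnessing edge remains present and the $T$- and $\partial_v S$-colour conditions persist (adding edges can only help those). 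The delicate point, and what I expect to be the main obstacle, is handling the corolla-like or near-corolla missing faces where no inner edge exists and no inner expansion keeps the face inside $R_i$ — here one must use that the bottom vertex of $R_i$ is $r_S \otimes w$ (so the root structure is rigid) to rule these out or to find the expansion, essentially checking that a missing face with no inner edge at all cannot simultaneously satisfy the three conditions unless it is the contraction of something bigger. Once (F5) is secured, Theorem \ref{theorem method} applies verbatim and gives that $A_0 \to \Omega[R_i]$ is a composition of pushouts of inner horns, hence an inner dendroidal anodyne extension.
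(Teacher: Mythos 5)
Your high-level strategy (produce an inner extension set and invoke Theorem \ref{theorem method}) is the same as the paper's, but your outline stops exactly where the content of the proposition lies, and the Segal-core-style choice of $F$ you start from does not work as claimed. First, for $F=\{\partial_e \mid e \text{ inner},\ P,\partial_eP \text{ missing}\}$ the closure axioms are not automatic: missing-ness can survive contracting each of two inner edges separately yet fail after contracting both (for instance when a colour $t$ of $T$ is carried by exactly two inner edges of $P$), so in the square of Axiom (F3) the face $\partial_g\partial_fP$ need not be missing even though $P$, $\partial_fP$, $\partial_gP$ are; your appeal to upward closedness only covers extensions, not iterated faces, and your proposed fix (``restrict to edges whose removal preserves missing-ness'') is again a one-edge-at-a-time condition that does not repair (F3). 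Second, and more seriously, you explicitly defer the Existence Axiom (F5) (``I would show\dots'', ``the delicate point\dots'') without giving the argument; note that it already bites for $P=R_i$, which has no extensions at all, so one must exhibit a specific inner edge of $R_i$ whose contraction remains missing --- nothing in your outline produces it.

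The paper resolves both issues simultaneously by a much more restrictive choice of $F$. It sets $X=\{x\in T \mid v\otimes x \text{ appears in } R_i\}$ and, for a missing face $P$, $X_P=\{x\in X \mid (l_j,x) \text{ appears in } P \text{ for some } j\}$, and takes $F$ to consist only of contractions of the edges $(r_S,x)$ with $x\in X_P$. For these edges missing-ness is preserved (the colour $x$ survives on $(l_j,x)$, and no colour of $\partial_vS$ is affected), so membership in $F$ depends only on the edge and (F1)--(F4) really are immediate; and (F5) holds because $(r_S,x)$ lies on the branch between $(l_j,x)$ and the root $(r_S,r_T)$, both of which appear in any missing face, so either $(r_S,x)$ is an inner edge of $P$ (giving $\F_F(P)\neq\emptyset$) or $P$ extends by inserting it (giving $\E_F(P)\neq\emptyset$). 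The combinatorial heart, which your proposal never supplies, is the lemma that $X_P\neq\emptyset$ for every missing face: the paper proves it by analysing where the colour $l_1$ (a colour of $\partial_vS$, hence present in $P$) can occur in $R_i$ --- if some $(l_1,t)$ is an input of a black vertex, one finds along its branch an edge $(l_1,x)$ feeding a white vertex $v\otimes x$, and an inverse percolation at $v\otimes x$ yields an earlier shuffle $R_k$ omitting all $(l_j,x)$, so the third bullet condition forces some $(l_j,x)$ into $P$; otherwise $l_1$ only occurs on edges attached to white vertices $v\otimes t$ and any such edge of $P$ already witnesses $X_P\neq\emptyset$. Without this choice of $F$ and this lemma your proposal is a plan rather than a proof.
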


\begin{proof}
Let us define
\[
	X=\{x\in T\mid v\otimes x \textrm{ appears in } R_i  \}.
\]
We will show that for every missing face $P$, the set
\[
X_P=\{x\in X \mid (l_j, x) \textrm{ appears in } P \textrm{ for some } j=1,...,m\}
\]
is non-empty. To show this we consider occurrences of the colour $l_1$ in the shuffle $R_i$. We consider two cases.
 
\emph{Case 1.} 
Let us assume there is an edge $(l_1, t)$ in $R_i$, with $t\in T$, which is an input of a black vertex (i.e. a vertex of the form $l_1 \otimes u$ for some vertex $u$ of $T$). Along the branch from that edge to the root of the shuffle there must be an edge $(l_1, x)$ which is the output of a black vertex and an input of a white vertex $v\otimes x$ for some $x\in T$.  
By definition of the set $X$, we have $x\in X$. 

The following picture of the relevant part of the tree illustrates the situation. 
\[  \xymatrix@R=10pt@C=12pt{
	& & \ldots &&&&&& \\
	& & *=0{\bullet} \ar@{-}[ul]^{(l_1,t)}  \ar@{-}[ur] &&&&&& \\
	& & \ldots \ar@{-}[u] &&&&&& \\
		& & *=0{\bullet} \ar@{-}[u] &&  *=0{\bullet} \ar@{-}[u]  &&  & *=0{\bullet} \ar@{-}[u] & \\
	&&&& *=<3.1pt>{\circ}  \ar@{-}[ull]^{(l_1,x)} \ar@{-}[u]_{(l_2,x)}  \ar@{-}[urrr]_{\quad (l_m, x)}    &&&&\\
	&&&&*=0{}\ar@{-}[u]^{(r_S, x)} &&&&
}
\]
Since $l_2,...,l_m$ are leaves of $S$, all edges $(l_j,x)$ are inputs of a white vertex $v \otimes x$ and outputs of black vertices. Hence the shuffle $R_i$ has a predecessor $R_k$, $k<i$, which does not contain $(l_j, x)$, $j=1,...,m$ (to obtain $R_k$ we can just apply an inverse percolation to $R_i$ at this white vertex $v \otimes x$). By the description of the missing faces at least one of these edges must appear in $P$. 

\emph{Case 2.} Let us assume that for every $t\in T$ edge $(l_1, t)$ is a leaf above a white vertex or connects two white vertices in the shuffle $R_i$. In this case the colour $l_1$ appears only on these edges. Colour $l_1$ must appear in $P$, so $P$ must contain such an edge $(l_1, t)$. This shows that $X_P$ is non-empty.\\

We now return to the proof of the proposition. Note that for a missing face $P$ with an inner edge $(r_S,x)$, $x\in X_P$, the face $\partial_{(r_S,x)} P$ is also missing as the colour $x$ appears on the edge $(l_j, x)$ by definition of $X_P$. We define 
\[
F=\{\partial_{f} P\to P \mid  P\not\in A_0, f=(r_S,x) \text{ inner edge of } P, x\in X_P \}
\]
and claim that $F$ is an extension set. Axioms (F1) and (F2) are satisfied because $F$ contains only inner elementary face maps. Axiom (F3) obviously holds because belonging of $\partial_f$ to $F$ depends only on the edge $f$. Analogously, every extension $\partial_f \in F$ appears only in diagrams of the form 
\[ \xymatrix{\partial_f P \ar[d]_{\partial_f} \ar[rr]^{\partial_g} &&  P' \ar[d]^{\partial_f} \\  P \ar[rr]_{\partial_g} && P\cup P',} \] so Axiom (F4) holds, too.  

Finally, to check Axiom (F5), note that for a missing face $P$ and $x$ in $X_P$, the edge $(r_S, x)$ is inner in $R_i$ and lies between $(l_j, x)$ and $(r_S, r_T)$. Since $P$ is missing, the edge $(r_S, r_T)$ must appear (so that $r_T$ appears in $P$) and the edge $(l_j, x)$ must appear because $x$ is in $X_P$. 

The situation can be again pictured with the relevant part of the tree. 
\[ 
 \xymatrix@R=10pt@C=12pt{
	& &  & &\ldots   &&&& \\
	& &  && *=<3.1pt>{\circ} \ar@{-}[ul]^{(l_1, x)}  \ar@{-}[ur]_{(l_m, x)} &&&& \\
	& &  & & &&&& \\
	&&&& *=0{\bullet}   \ar@{-}[uulll] \ar@{-}[uu]^{\quad (r_s, x)}  \ar@{-}[uurrr]    &&&&\\
	&&&&*=0{}\ar@{-}[u]^{(r_S, r_T)} &&&&
}
\]
 If $(r_S,x)$ appears in $P$, it must be an inner edge and $\F_F(P)$ is not empty. If $(r_S, x)$ does not appear in $P$, we can extend $P$ with the edge $(r_S,x)$ to obtain $P'$ such that $\partial_{(r_S,x)} P' = P$, so $\E_F(P)$ is not empty.  By Theorem \ref{theorem method}, it follows that $A_0\to \Omega[R_i]$ is an inner dendroidal anodyne extension. 
\end{proof}

\begin{definition}
	Let $R_i$ be a shuffle of $S\otimes T$ with the bottom vertex $v\otimes r_T$. We say that a face $R$ of $R_i$ is \emph{essential} if it contains all the edges of $R_i$ of the form $l_j \otimes t$ for $j\in \{2,...,m\}$ and $t\in T$. 
\end{definition}

\begin{definition} \label{def T-top}
	Let $R_i$ be a shuffle of $S\otimes T$ with the bottom vertex $v\otimes r_T$ and $R$ an essential face of $R_i$. The \emph{$T$-covering set of $R$} is a subset $Y$ of $T$ such that $x$ is in $Y$ if there is a leaf $s\otimes x$ of $R$ for $l_1\leqslant s$. A subset $X$ of $Y$ consisting of maximal elements with respect to the order in $T$ is called the \emph{$T$-top of $R$}. 
\end{definition}

\begin{example} \label{example shuffle2} Here is another example of a shuffle of the same two trees as in Example \ref{example shuffle}
	\[
	\xymatrix{
		&&&&&&&&&& \\
		*=0{\bullet} && *=0{\bullet} \ar@{-}[u]_{(4,d)}   & *=0{\bullet} && *=0{\bullet} \ar@{-}[u]_{(5,d)} &&&&&& \\
		&*=0{\bullet} \ar@{-}[ul]_{(4,b)}  \ar@{-}[ur]_{(4,c)} & &  & *=0{\bullet} \ar@{-}[ul]_{(5,b)} \ar@{-}[ur]_{(5,c)}  
		& *=0{\bullet}  && *=0{\bullet}  \ar@{-}[u]_{(2,d)}   &*=0{\bullet}  && *=0{\bullet}  \ar@{-}[u]_{(3,d)}   & \\
		&&& *=<3.1pt>{\circ} \ar@{-}[ull]_{(4,a)} \ar@{-}[ur]_{(5,a)} &&& *=0{\bullet} \ar@{-}[ul]_{(2,b)} \ar@{-}[ur]_{(2,c)} &&&  *=0{\bullet} \ar@{-}[ul]_{(3,b)} \ar@{-}[ur]_{(3,c)} &&  \\
		&&&&&&*=<3.1pt>{\circ} \ar@{-}[ulll]_{(1,a)} \ar@{-}[u]_{(2,a)} \ar@{-}[urrr]_{(3,a)}  & &&\\
		&&&&&&*=0{}\ar@{-}[u]^{(0,a)} &&
	} \]
and an example of its essential face 
	\[
	\xymatrix{
		 &&&&&&&&&& \\
		&*=0{\bullet} \ar@{-}[ul]_{(4,b)} \ar@{-}[ur]_{(4,c)}  &  & *=0{\bullet} \ar@{-}[u]_{(5,d)}   
		& *=0{\bullet}  && *=0{\bullet}  \ar@{-}[u]_{(2,d)}   &*=0{\bullet}  && *=0{\bullet}  \ar@{-}[u]_{(3,d)}   & \\
		&& *=<3.1pt>{\circ} \ar@{-}[ul]_{(4,a)} \ar@{-}[ur]_{(5,a)} &&& *=0{\bullet} \ar@{-}[ul]_{(2,b)} \ar@{-}[ur]_{(2,c)} &&&  *=0{\bullet} \ar@{-}[ul]_{(3,b)} \ar@{-}[ur]_{(3,c)} &&  \\
		&&&&&*=<3.1pt>{\circ} \ar@{-}[ulll]_{(1,a)} \ar@{-}[u]_{(2,a)} \ar@{-}[urrr]_{(3,a)}  & &&\\
		&&&&&*=0{}\ar@{-}[u]^{(0,a)} &&
	} \]
with the $T$-covering $Y=\{b,c,d\}$ and the $T$-top $X=\{b,d\}$. 
\end{example}

\begin{lemma} \label{lemma operation Y}
	Let $R_i$ be a shuffle of $S\otimes T$ with the bottom vertex $v\otimes r_T$ and $R$ an essential face of $R_i$. The $T$-covering set $Y$ of $R$ has the property that every branch from a leaf to the root of $R_i$ has at least one edge of the form $(s, x)$, $x\in Y$. In particular, for the $T$-top $X$, we have an operation $(X; r_T)$ in $T$. 
\end{lemma}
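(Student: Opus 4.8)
The statement splits into the branch‑covering property and the claim that $(X;r_T)$ is an operation of $T$. My plan is first to reduce the branch‑covering property to a statement about $T$ alone, then to prove that by walking up a suitable branch inside $R$, and finally to deduce the operation claim from a slightly stronger inductive statement about faces of shuffles.

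For the branch‑covering property, the key observation is that for any leaf $(l,t)$ of $R_i$ the branch of $R_i$ from $(l,t)$ to the root is an interleaving of the branch from $l$ to $r_S$ in $S$ with the branch from $t$ to $r_T$ in $T$: each downward step decreases exactly one coordinate to an immediate predecessor, so the second coordinates occurring along it are exactly the edges of the branch from $r_T$ to the leaf $t$ in $T$. Hence the branch‑covering property is equivalent to: \emph{every root‑to‑leaf branch of $T$ meets $Y$}. To prove this, fix a leaf $t$ of $T$ and choose a leaf $\lambda$ of $S$ with $l_1\leq\lambda$ — this exists since either $l_1$ is itself a leaf, or $l_1$ is inner and the subtree $S_{\geq l_1}$ of $S$ on the edges $e\geq l_1$, being open or linear, has a leaf. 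As the bottom vertex of $R_i$ is $v\otimes r_T$ and $l_1\leq\lambda$, the branch of $R_i$ from $(\lambda,t)$ to the root passes through $(l_1,r_T)$ and above it runs inside a shuffle of $S_{\geq l_1}$ with $T$, again tracing in its second coordinates the branch from $r_T$ to $t$. Since (for $T\ne\eta$) every input of $v\otimes r_T$ is an inner edge, for $m\geq 2$ there is no bottom face at $v\otimes r_T$, so $(l_1,r_T)\in R$; for $m=1$ one starts the walk from the root of $R$ instead, which also lies on the branch. Walking up from $(l_1,r_T)$ towards $(\lambda,t)$ inside $R$: either $(\lambda,t)$ itself lies in $R$, so $t\in Y$; or there is a last edge $(\sigma,\tau)$ of this branch lying in $R$, and then the vertex of $R_i$ above it must have been chopped off (a face acts on whole vertices), so $(\sigma,\tau)$ is a leaf of $R$ with $l_1\leq\sigma$. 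In either case we have produced a leaf of $R$ whose second coordinate lies on the branch from $r_T$ to $t$, that is, an element of $Y$ on that branch. With the opening observation this settles the branch‑covering property.

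For the operation, let $P$ be the part of $R$ lying above $(l_1,r_T)$, a face of the shuffle $Q$ of $S_{\geq l_1}$ with $T$, so that $Y$ is exactly the set of second coordinates of the leaves of $P$. Uniqueness in the two clauses of the definition of an operation is automatic, since $X$ is a set of maximal elements, hence an antichain, and two of its elements below a common leaf of $T$ would be comparable. The real content is existence (and, if $T$ has stumps, the stump clause), and the subtlety is that the largest element of $Y$ on the branch to a leaf need not be maximal in $Y$; this is precisely where the white/black structure of the shuffle enters. I would prove by induction on the number of elementary faces needed to obtain $P$ from $Q$ the stronger statement: \emph{for every face $P$ of $Q$, the set $X(P)$ of maximal second coordinates of the leaves of $P$ gives an operation $(X(P);t_0)$ of $T$, where $t_0$ is the second coordinate of the root of $P$.} For $P=Q$ one gets $X(Q)=L(T)$ and $t_0=r_T$, the total operation. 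Contracting an inner edge changes neither leaves nor root; chopping a white top vertex $v_\sigma\otimes t$ replaces several leaves with second coordinate $t$ by a single leaf with second coordinate $t$, so changes neither $Y$ nor the root; so the statement is inherited in these cases. The one genuine case is chopping a black top vertex $\sigma\otimes u$, with $u$ a vertex of $T$ of output $x$: this replaces, among the second coordinates of leaves, the inputs of $u$ by its output $x$, which is exactly the effect on $(X(P'');t_0)$ of precomposing with the vertex operation of $u$; since a composite of operations of $T$ is again an operation (Lemma~\ref{operations_lemma}), the property is preserved. A bottom face can occur only when $m=1$ and shifts the root, but in the setting where the lemma is applied the colour $r_T$ occurs in $R$, so $t_0=r_T$. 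When $S$ is linear one can shortcut all this: $S_{\geq l_1}$ is linear, so $\Omega(S_{\geq l_1})$ maps to the unit of $\otimes_{BV}$, giving a projection $\Omega(S_{\geq l_1})\otimes_{BV}\Omega(T)\to\Omega(T)$; the image of the leaf‑operation of $P$ under $\Omega(P)\hookrightarrow\Omega(Q)\to\Omega(T)$ is then an operation of $T$ whose input set is $Y$, and an operation has no two comparable inputs below a common leaf, so $X=Y$.

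The main obstacle is this existence clause — proving that the top element of $Y$ along a branch is in fact maximal in $Y$. The cleanest way around it that I see is the induction above, whose only real work is the black‑top‑vertex step: there the bookkeeping of which second coordinates really leave $Y$ when the inputs of $\sigma\otimes u$ are deleted (some may survive at other leaves of $P$), together with the stump case when $T$ is not open, is the delicate part, but in every instance the change on $Y$ is a composition with a vertex of $T$, which visibly preserves the property of defining an operation.
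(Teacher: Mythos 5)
Your route is genuinely different from the paper's: the paper proves the branch-covering statement by a single induction along the elementary faces leading from $R_i$ to $R$ (inner faces change nothing, one type of top face leaves the leaf colours unchanged, the other replaces the inputs of a vertex of $T$ by its output), and then records the operation statement as an immediate consequence, whereas you prove the covering statement directly by walking up one branch and give a separate induction for the operation statement. Your first part is essentially correct, but two points need tightening. The step ``the last edge $(\sigma,\tau)$ of the branch lying in $R$ is a leaf of $R$'' does not follow merely from ``a face acts on whole vertices'': the edges of $R$ above $(\sigma,\tau)$ could a priori all lie off the chosen branch. What makes it true is that the set of immediate successors in $R$ of a non-leaf edge of $R$ is an operation of $R_i$, and the leaf clause of that operation, applied to the genuine leaf $(\lambda,t)$ of $R_i$, produces a successor of $(\sigma,\tau)$ in $R$ on the branch, contradicting maximality; this also shows why your choice of $\lambda$ a leaf (not a stump output) matters. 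Also, an essential face need not contain $(l_1,r_T)$ --- that edge may be contracted (these are exactly the faces $\partial_{(l_1,r_T)}R$ used later in the paper) --- so the claim ``$(l_1,r_T)\in R$ for $m\geq 2$'' is wrong, and then ``the part of $R$ above $(l_1,r_T)$'' is not a single face of $Q$; both slips are harmless (start the walk at the root of $R$, and run your induction over faces of $R_i$ itself).

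The genuine gap is in the inductive step for the operation claim. You are right that the covering property alone does not yield that the maximal elements of $Y$ cover the leaves of $T$ (the paper's ``in particular'' supplies no argument for this), but your verification of the black-top-vertex step rests on the assertion that ``in every instance the change on $Y$ is a composition with a vertex of $T$'', and that is not true: if some inputs $t_i$ of $u$ also occur as second coordinates of other leaves of $P$, then chopping $\sigma\otimes u$ removes nothing from $Y$ and merely adds the output $x$, which may or may not be maximal in the new $Y$; the new $X$ can coincide with the old one or change in a way that is not obtained from the old $X$ by substituting the vertex $u$, so the appeal to Lemma \ref{operations_lemma} does not apply as stated. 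Since, as you correctly note, only the ``existence over leaves of $T$'' clause can fail, the missing content is precisely the statement that for every leaf $t$ of $T$ the largest element of $Y$ on the branch of $t$ is maximal in $Y$. One way to get this is to use that the whole set of leaves of the face is an operation of $R_i$: if $y$ is that largest element and some leaf $(\sigma',y')$ of the face had $y'>y$, apply the leaf clause to the $R_i$-leaf $(\lambda',t)$ with $\lambda'$ a leaf of $S$ above $\sigma'$; the unique leaf of the face below $(\lambda',t)$ cannot have second coordinate $\leq y$ (it would either lie below the edge $(\sigma',y')$ of the face, which is impossible for a leaf of the face, or contradict the staircase shape of branches of a shuffle), so its second coordinate lies strictly between $y$ and $t$, a contradiction. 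As written, your induction states a true invariant but does not prove its crucial step, which is exactly the point where the lemma goes beyond the branch-covering property.
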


\begin{proof}
	For $R_i$ the statement is true since $Y$ is the set $L(T)$ of leaves of $T$. Each essential tree $R$ is obtained from $R_i$ by a sequence of inner and top face maps above the edge $(l_1, r_T)$. By the dendroidal relations, we know that to obtain $R$, we may first perform top faces and then inner faces. Hence it is enough to prove that the stated property of $R$ does not change as we contract an inner edge or chop off a top vertex above $(l_1, r_T)$.  For inner face maps, the statement is obvious as the set of leaves of $R$ does not changes, so the $T$-covering set $Y$ does not change. By chopping off a black top vertex,  the $T$-covering set $Y$ does not change. When chopping off a white top vertex, it might happen that the set $Y$ changes, but replace the inputs of one vertex in $T$ with the output of that vertex, so the stated property still holds. 
\end{proof}

\begin{prop} \label{shuffle prop}
	Let $R_i$ be a shuffle of $S\otimes T$ with the bottom vertex $v\otimes r_T$, $R$ an essential face of $R_i$ and $X$ the $T$-top of $R$. Let $A$ be the dendroidal subset of $\Omega[R]$ such that the missing faces are all those faces for which: 
	\begin{itemize}
		\item all edges of $R$ of the form $(s, t)$ appear, for $t\in T$, $s\in S$ and $l_1\leqslant s$, 
		\item all colours of $T$ appear, 
		\item all colours of $S$ appear, 
		\item there is at least one edge which is not an edge of $R_k$, for each $k<i$.
	\end{itemize}
 In addition, if $R$ contains the edge $(l_1, r_T)$, assume that the unique maximal face $R''$ of $R$ having $(l_j, x)$ as leaves, for all $x\in X$ and $j\in \{2,\ldots, m\}$, is not missing. Then, the inclusion $A \to \Omega[R]$ is a covariant dendroidal anodyne extension. 
\end{prop}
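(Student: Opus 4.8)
The plan is to exhibit an explicit extension set $F$ for $A$ inside $\Omega[R]$ and then invoke Theorem \ref{theorem method}; since we are aiming at a \emph{covariant} anodyne extension, the elements of $F$ should all be inner or top elementary face maps. The natural candidate, guided by Proposition \ref{prop extension 1} and the structure of the $T$-top, is to take face maps that act on the region of $R$ lying above $(l_1,r_T)$: for a missing face $P$, one considers the edges $(r_S,x)$ or $(l_1,x)$ with $x$ related to the $T$-top $X$ of $P$ (or, more precisely, to the $T$-covering set $Y_P$ of the essential part of $P$), together with the top vertices $v\otimes x$ for $x$ in the $T$-top. Concretely, I would first set up, for each missing face $P$, the $T$-covering set $Y_P$ and $T$-top $X_P$ as in Definitions \ref{def T-top}, and use Lemma \ref{lemma operation Y} to guarantee $(X_P;r_T)$ is an operation of $T$ and that $X_P$ is non-empty; this non-emptiness is what will ultimately power the Existence Axiom (F5).

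Next I would define $F$ to consist of two kinds of maps: inner face maps contracting an edge $(r_S,x)$ (or $(l_1,x)$, depending on which is inner in $P$) for $x$ a suitable element associated to $X_P$, together with top face maps $\partial_{v\otimes x}$ chopping off a white top vertex $v\otimes x$ with $x$ in the $T$-top, in the cases where such a vertex is a genuine top vertex of $P$. The clause in the hypothesis about the maximal face $R''$ (with all $(l_j,x)$, $j\ge 2$, as leaves, not being missing) is exactly what is needed to ensure that when we chop off $v\otimes x$ we land on a face that is either non-missing or again has the right structure, so that (F5) does not fail at the "top" of the filtration. Verifying Axioms (F1)–(F4) should be relatively mechanical: (F1) holds because $F$ contains no bottom face maps and the only pairs that could be mixed/adjacent/bad would involve a bottom vertex or two outer vertices on the same inner edge, which we exclude by construction (a mixed pair here would be $\{\partial_{v\otimes x},\partial_{(l_1,x)}\}$ or similar, and we must check our choices never put both in $F$ simultaneously); (F2) is automatic once no bad pairs occur; (F3) and (F4) follow from the fact that membership of $\partial_f$ in $F$ depends only on combinatorial data (which edge is contracted, or which white top vertex attached to an edge $(r_S,x)$ or $(l_1,x)$ is removed) that is preserved under passing to faces and extensions, exactly as in the proofs of Propositions \ref{prop extension 1} and Example \ref{SegalCore}.

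The heart of the argument — and the main obstacle — is the Existence Axiom (F5): for every missing face $P$ one must produce either an $F$-face or an $F$-extension. The difficulty is that there are two competing "directions" (contracting inner edges $(r_S,x)$ versus removing white top vertices $v\otimes x$), and for a given $P$ one has to argue that at least one of the relevant edges/vertices is present (giving an $F$-face) or that $P$ can be extended by adding one (giving an $F$-extension), while staying inside the set of missing faces. This is where I expect to split into cases according to whether $P$ contains the edge $(l_1,r_T)$, whether the white top vertices $v\otimes x$ survive in $P$, and where exactly the $T$-top sits; the essential-face machinery of Lemma \ref{lemma operation Y} together with the "$R''$ not missing" hypothesis should let each case close. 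I would also need a short lemma, parallel to the $X_P\ne\emptyset$ argument in the proof of Proposition \ref{prop extension 1}, showing that the $T$-top of (the essential part of) any missing face is non-empty and that the associated edge $(r_S,x)$ or $(l_1,x)$ is genuinely inner in $P$ whenever both its endpoints' colours appear — which holds because a missing face must contain $(r_S,r_T)$ and, by the first bullet in the hypothesis on $A$, all the $(s,t)$ with $l_1\le s$.

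Once $F$ is shown to be an extension set, Theorem \ref{theorem method} gives that $A\to\Omega[R]$ is a composition of pushouts of horns $\Lambda^f[P]\to\Omega[P]$ with $\partial_f\in F$; since every such $\partial_f$ is either an inner or a top elementary face map, the conclusion is that $A\to\Omega[R]$ is a covariant dendroidal anodyne extension, which is what we wanted. The only place where genuine care is required beyond bookkeeping is making the choice of "which $x$ in $X_P$" canonical enough that (F3) and (F4) hold verbatim; a safe choice is to let $F$ contain \emph{all} the inner face maps $\partial_{(r_S,x)}$ (resp. $\partial_{(l_1,x)}$) with $x\in X_P$ and \emph{all} the top faces $\partial_{v\otimes x}$ with $x\in X_P$ a leaf of (the essential part of) $P$, rather than selecting a single minimal one — the minimality needed for the filtration is then handled internally by the planar order, exactly as in Section \ref{SectOrder}.
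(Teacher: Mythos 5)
Your choice of extension set is aimed at the wrong part of the tree, and most of the face maps you propose are not even eligible to lie in an extension set for this $A$. By definition, $F$ must be contained in $\{\partial_f\colon \partial_f P\to P \mid P,\partial_f P\not\subseteq A\}$, i.e.\ both $P$ and $\partial_f P$ must be missing. But the first bullet defining the missing faces forces every missing face to contain \emph{all} edges $(s,t)$ with $l_1\leqslant s$; hence contracting an inner edge $(l_1,x)$, or chopping off a vertex whose inputs include such an edge, always lands in a non-missing face and can never be in $F$. Moreover, in the present situation the bottom vertex of $R_i$ is the white vertex $v\otimes r_T$, so the root is $(r_S,r_T)$ and no edges $(r_S,x)$ with $x\neq r_T$ and no white vertices $v\otimes x$ other than the bottom one exist in $R$ at all: the faces $\partial_{(r_S,x)}$ and the ``white top vertices $v\otimes x$'' you want to remove belong to the setting of Proposition \ref{prop extension 1} (bottom vertex $r_S\otimes w$), not to this one. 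The actual combinatorial freedom of a missing face lies entirely in the copies $l_j\otimes T$ for $j\in\{2,\dots,m\}$, and the paper's extension set consists exactly of the inner faces $\partial_{(l_j,x)}$ with $x\in X$, $j\geqslant 2$, and the top faces $\partial_{l_j\otimes w}$ with $w$ an $X$--vertex of $T$ (in the sense of Definition \ref{X-vertex}), $j\geqslant 2$, where $X$ is the fixed $T$-top of $R$ — not a $P$-dependent set $X_P$. Your suggestion to let membership in $F$ depend on $X_P$ also undercuts your claim that (F3) and (F4) are mechanical: $X_P$ is not stable under passing to faces and extensions, whereas with the fixed $X$ membership depends only on the edges or vertex being removed.

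The hardest point, Axiom (F5), is also resolved differently from what you sketch. One must show that every missing face $P$ contains at least one edge $(l_j,r_T)$ with $j\geqslant 2$ (this is where the hypothesis that $R''$ is not missing enters when $(l_1,r_T)\in R$, and where the fourth bullet about predecessors $R_k$ enters otherwise — not, as you suggest, to control what happens ``when we chop off $v\otimes x$''), and then, if $\F_F(P)=\emptyset$, to find a leaf $(l_j,y)$ of $P$ with $y\notin X$ and use the fact that $(X;r_T)$ is an operation of $T$ (Lemma \ref{lemma operation Y}) to build an extension of $P$ by a top vertex $l_j\otimes w$ with $w$ an $X$--vertex or a stump, showing $\E_F(P)\neq\emptyset$. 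None of this case analysis can be carried out with your candidate $F$, since, as explained above, its elements either do not exist in $R$ or leave the class of missing faces; so the proposal as it stands does not yield an extension set and the appeal to Theorem \ref{theorem method} is not justified.
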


\begin{example} We illustrate the tree $R''$ for the essential face described in the Example \ref{example shuffle2}
\[ 	
	\xymatrix{
		 &&&&&&&&& \\
		&*=0{\bullet} \ar@{-}[ul]_{(4,b)} \ar@{-}[ur]_{(4,c)}  &  & *=0{\bullet} \ar@{-}[u]_{(5,d)}   &&&&& \\
		&& *=<3.1pt>{\circ} \ar@{-}[ul]_{(4,a)} \ar@{-}[ur]_{(5,a)} &&_{(2,b)}  & _{(2,d)}  & _{(3,b)}& _{(3,d)}&&\\
		&&&&&*=0{} \ar@{-}[ulll]^{(1,a)}    \ar@{-}[ul]\ar@{-}[u]   \ar@{-}[ur] \ar@{-}[urr]  & &\\
		&&&&&*=0{}\ar@{-}[u]^{(0,a)} &
	}
\] 
with the $T$-top $X=\{b,d\}$.
\end{example}

\begin{proof} Recall the definition of an $X$-vertex from Definition \ref{X-vertex}.
We say that an elementary face map $\partial_f P\to P$ is an \emph{$X$-face map} if $f$ is
\begin{itemize}
	\item an inner edge $(l_j, x)$, $x\in X, j\in \{2,...,m\}$ or
	\item a top vertex  $l_j \otimes w$ such that $w$ is an $X$--vertex and $j\in \{2,...,m\}$.
\end{itemize}
Let $F$ be a set consisting of $X$-face maps $\partial_f P\to P$ such that $P$ and $\partial_f P$ are missing faces of $R$ with respect to $A$.  We claim that $F$ is an extension set. We now check the axioms. 

\begin{enumerate}
\item[(F1)] 
The Forbidden Pair Axiom follows immediately from the definition of the set $F$. Indeed, since $X$ is an operation there are no two top face maps with the same output. Thus, there is no bad pair of extensions in $F$. Similarly, there is no pair of adjacent or mixed face maps in $F$. 

\item[(F2)] 
To show The Bad Pair Axiom, let $\partial_g$ be an extension of a missing face $P$. A pair $(\partial_f , \partial_g)$ of extensions of $P$ with $\partial_f \in F$ is bad only if
\begin{itemize}
	\item $f$ is a top vertex of the form $l_j\otimes w$, where $w$ is an $X$-vertex with the output $x$ and $j\in\{2,\ldots,m\}$,
	\item $g$ is a top vertex with the same output $(l_j,x)$. 
\end{itemize}	
Since $X$ is an operation, there is at most one such $f$ as $w$ is uniquely determined as the set of all elements of $X$ above $x$. 

\item[(F3)] 
Next, to check The Face Closure Axiom, let us consider the following commutative diagram of elementary face maps: 
\[ \xymatrix{\partial_g\partial_f P \ar[d]_{\partial_f} \ar[rr]^{\partial_g} && \partial_f P \ar[d]^{\partial_f} \\  \partial_gP \ar[rr]_{\partial_g} && P.} \]
If we assume that $P$, $\partial_f P$ and $\partial_g\partial_P$ are missing faces, then $\partial_g P$ is also missing because it contains all edges of $\partial_g \partial_f P$ and missing faces are determined by their set of edges. 
We also need to prove that if $P$, $\partial_f P$ and $\partial_g P$ are missing faces, then $\partial_g \partial_f P$ is missing, too. The edges deleted from $P$ to obtain $\partial_f P$ and from $\partial_f P$ to obtain $\partial_g \partial_f P$ are of the form $(l_j, x)$, $x\in X$, $j\in \{2,...,m\}$, so:
\begin{itemize}
	\item $\partial_g\partial_f P$ contains all edges of the form $(s,t)$ of $R$ for $l_1 \leqslant s$, since the same is true for $P$; 
	\item the only colour of $T$ that might have been erased is some $x$ in $X$, but $P$ and hence $\partial_g\partial_f P$ contains $(s, x)$ (with $l_1\leqslant s$) by the definition of the set $X$; 
	\item the only colour of $S$ that might have been erased is $l_j$ for $j\in \{2,\ldots, m\}$, but these colours must appear in $\partial_g \partial_f P$ because we have not used root faces by which we would erase all parts of $l_2\otimes T$, \ldots, $l_m\otimes T$;
	\item there is at least on edge in $\partial_g \partial_f P$ which is not edge of $R_k$ for $k<i$ because the same is true for $P$. 
\end{itemize}
Belonging of $\partial_f$ to $F$ depends only on the set $f$, so any side of the above square belongs to $F$ if and only if the opposite side belongs to $F$.

\item[(F4)] 
For The Extension Closure Axiom, we first note that any extension of a missing face is missing. Let $\partial_g \colon P \to P'' $ be an elementary face map and $\partial_f \colon P \to P'$, $\partial_f \in \E_F(P)$. If $\partial_f$ and $\partial_g$ are not elementary face maps corresponding to top vertices with the same output, then $(\partial_f, \partial_g)$ is a good pair, and the statement follows again because any side of the square 
\[ \xymatrix{P \ar[d]_{\partial_f} \ar[rr]^{\partial_g} && P' \ar[d]^{\partial_f} \\  P'' \ar[rr]_{\partial_g} && P'\cup P''.} \]
belongs to $F$ if and only if the opposite side belongs to $F$, too. 

The pair $(\partial_f, \partial_g)$ is bad if and only if $f$ and $g$ are top vertices with the same output. In Remark \ref{extension sequence remark}, we have described that the extension sequence $\partial_{f_1},..., \partial_{f_r}$ satisfies \[f_1\cup \ldots \cup f_n = f.\] Since belonging to $F$ depends only on the set of edges being erased, all these maps are in $F$. 

\item[(F5)] 
Finally, to show The Existence Axiom, let $P$ be a missing face such that $\F_F(P)$ is empty.
If $R$ does not contain the edge $l_1\otimes r_T$, then $l_1$ must appear on some other edge of $R$. Hence it must also appear on some other edge of $R_i$, so $(l_1, r_T)$ is an output of a black vertex in $R_i$. From this we see that there is a percolation step in which edges $(l_1, r_T)$, $(l_2,r_T)$, \ldots, $(l_m, r_T)$ appear in $R_i$, so the missing face $P$ must have at least one edge of the form $(l_j, r_T)$ for $j\geq 2$. Let us fix one such $j$. By the assumption that $\F_F(P)$ is empty, $P$ has no inner edges of the form $(l_j, x)$ with $x\in X$ and it has no top vertices of the form $l_j\otimes w$ with $w$ being an $X$-vertex. Hence there is a leaf $(l_j, y)$ of $P$ such that $y\not \in X$. Since $(X; r_T)$ is an operation of $T$, the set $X$ has the property that for edge $y$ of $T$ there
either exists $x\in X$ such that $y\leq x$ or $x \leq y$ or there exists a stump $w$ of $T$ with an output $x$ such that $y\leq x$.

In the first case, since $P$ has no inner edges $(l_j,x)$, $x\in X$, there must exist $x\in X$ such that $y \leq x$. There exists a unique face $P'$ with a top vertex $l_j\otimes w$ such that $w$ is an $X$-vertex, $(l_j,y)$ is the output and $(l_j,x)$ is one of the leaves of $l_j\otimes w$ and such that $\partial_{l_j\otimes w}P' =P$. 

Similarly, in the second case, there exists a unique face $P'$ with a top vertex $l_j\otimes w$ with $w$ the stump with the output $(l_j,y)$ and such that $\partial_{l_j\otimes w}P' =P$. In any case, we conclude $\E_F(P)$ is not empty. 

If $R$ contains the edge $(l_1, r_T)$, the conclusion that each missing face has at least on edge of the form $(l_j, r_T)$ for $j\geq 2$ follows from the assumption that $R''$ is not missing. 
\end{enumerate}	

This ends the proof as the result follows by Theorem  \ref{theorem method}.\end{proof}

\subsection{The pushout-product property for dendroidal sets.}

\begin{theorem} \label{PushoutProductStab}
Let $S$ and $T$ be trees, let $v$ be the bottom vertex of $S$ with inputs $l_1,l_2,...,l_m$ such that $l_2,...,l_m$ are leaves. If $S$ or $T$ is linear or both $S$ and $T$ are open trees, then the morphism
$$\Lambda^{v}[S] \otimes \Omega[T] \cup \Omega[S] \otimes \partial\Omega[T] \to \Omega[S]\otimes \Omega[T]$$ is a stable anodyne extension.
\end{theorem}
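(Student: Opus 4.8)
The plan is to run the filtration machinery of Theorem~\ref{theorem method} over the shuffle decomposition of $\Omega[S]\otimes\Omega[T]$. Fix a planar structure and a linear order $R_1\preceq\cdots\preceq R_N$ on the shuffles of $S$ and $T$ extending the right percolation order of Definition~\ref{shuffles}, so that $\Omega[S]\otimes\Omega[T]=\bigcup_{i=1}^N\Omega[R_i]$. Writing $A_0=\Lambda^{v}[S]\otimes\Omega[T]\cup\Omega[S]\otimes\partial\Omega[T]$ and $A_i=A_0\cup\bigcup_{j\le i}\Omega[R_j]$, one gets a finite chain $A_0\subseteq A_1\subseteq\cdots\subseteq A_N=\Omega[S]\otimes\Omega[T]$ in which $A_{i-1}\subseteq A_i$ is the pushout of $\bigl(A_{i-1}\cap\Omega[R_i]\bigr)\to\Omega[R_i]$ along $A_{i-1}\cap\Omega[R_i]\hookrightarrow A_{i-1}$. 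Since stable anodyne extensions are closed under pushout and composition, it suffices to show each $A_{i-1}\cap\Omega[R_i]\to\Omega[R_i]$ is a stable anodyne extension; here a face $P$ of $R_i$ is missing for $A_{i-1}\cap\Omega[R_i]$ exactly when $P\not\subseteq A_0$ and $P$ carries an edge lying in no $R_j$ with $j<i$.

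The first task is to translate $P\not\subseteq A_0$ into appearance of colours. For a face $P$ of a shuffle, $P\subseteq\Omega[S]\otimes\partial\Omega[T]$ iff some colour of $T$ does not appear in $P$, and --- using that $\partial_vS$ is the only proper face of $S$ containing every colour $s$ with $l_1\le s$ --- $P\subseteq\Lambda^{v}[S]\otimes\Omega[T]$ iff some colour of $\partial_vS$ does not appear in $P$. Hence the missing faces of $A_{i-1}\cap\Omega[R_i]$ are precisely those $P$ in which every colour of $T$ and every colour of $\partial_vS$ appears and which carry an edge outside all earlier shuffles. Every shuffle $R_i$ has bottom vertex of the form $r_S\otimes w$ (with $w$ the bottom vertex of $T$) or of the form $v\otimes r_T$. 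In the former case this is exactly the collection of missing faces of Proposition~\ref{prop extension 1}, so $A_{i-1}\cap\Omega[R_i]\to\Omega[R_i]$ is inner anodyne and that $i$ is settled.

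For a shuffle $R_i$ with bottom vertex $v\otimes r_T$ I would interpose a secondary filtration indexed by the essential faces of $R_i$, processed in an order refining inclusion. Adding them one at a time with their full representables, the step at an essential face $R$ should --- once one checks that the intersection of $\Omega[R]$ with everything built so far coincides with the subobject described in Proposition~\ref{shuffle prop} --- be precisely an instance of that proposition, hence covariant anodyne; Lemma~\ref{lemma operation Y} is what makes the $T$-top $X$ and the auxiliary tree $R''$ there behave. The bookkeeping is that a missing face $P$ of $A_{i-1}\cap\Omega[R_i]$ is captured at the essential face which is its essential hull (the smallest essential face of $R_i$ containing $P$), that conditions (a)--(d) of Proposition~\ref{shuffle prop} relative to that hull recover exactly ``$P\not\subseteq A_0$ and $P$ new at stage $i$'', and that the side condition ``$R''$ is not missing'' holds because $R''$, having the $(l_j,x)$ with $x\in X$ as its leaves above the $l_j$ ($j\ge 2$), is a face of a strictly earlier shuffle (a de-percolated form of $R_i$). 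Combining the two cases, $A_0\to\Omega[S]\otimes\Omega[T]$ is a finite composite of pushouts of inner and top horns of faces of shuffles, hence a stable anodyne extension.

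The hypothesis that $S$ or $T$ is linear or that both are open enters only in order to be entitled to Propositions~\ref{prop extension 1} and \ref{shuffle prop}, which are proved under exactly this assumption; concretely it controls the creation and destruction of stumps under percolation steps, which is what could otherwise break the identification of missing faces. The genuine obstacle I expect is the second case: choosing the order on the essential faces of $R_i$ so that the step at $R$ matches Proposition~\ref{shuffle prop} on the nose, and in particular verifying that the maximal face $R''$ with the prescribed leaves is already present --- this is where the percolation ordering, the appearance-of-colours bookkeeping and the linear/open hypothesis must all be brought together. The degenerate configurations ($S$ or $T$ a unit tree, $S$ a corolla, $l_1$ itself a leaf) should be dispatched separately; in most of them the map in question is directly a horn inclusion or follows from a single application of Proposition~\ref{prop extension 1}.
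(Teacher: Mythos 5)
Your outer strategy (filtering by shuffles in the right percolation order, reducing to $A_{i-1}\cap\Omega[R_i]\to\Omega[R_i]$, and applying Proposition \ref{prop extension 1} when the bottom vertex of $R_i$ is black) coincides with the paper's proof. The gap is in the case of a shuffle with white bottom vertex $v\otimes r_T$, and it is not a technicality: your justification of the side condition of Proposition \ref{shuffle prop} --- that the tree $R''$ with leaves $(l_j,x)$, $x\in X$, $j\geq 2$, ``is a face of a strictly earlier shuffle (a de-percolated form of $R_i$)'' --- is false. The tree $R''$ contains the edge $(l_1,r_T)$ (and the whole face $R'$ sitting above it), and edges of this form are exactly the ones created by the last percolation steps, so $R''$ is a face of $R_i$ itself but of no earlier shuffle. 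Already for $S=C_2$ (leaves $l_1,l_2$) and $T=C_1$ (edges $a<b$), the shuffle $R_i$ with white bottom vertex gives $R''$ with edges $(r_S,r_T),(l_1,r_T),(l_1,b),(l_2,b)$, while the unique earlier shuffle has edge set $\{(r_S,a),(r_S,b),(l_1,b),(l_2,b)\}$; so $R''$ is itself a missing face, not something already present. Consequently, for the faces $R$ that contain the white root vertex together with the edge $(l_1,r_T)$ and the full copies $l_j\otimes T$, you have no mechanism to verify the hypothesis of Proposition \ref{shuffle prop}, and your essential-hull filtration cannot capture them.

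This is precisely the point where the paper has to do extra work and where genuinely stable (not covariant) anodyne maps enter: after first adjoining all faces $\partial_{(l_1,r_T)}R$ (a covariant anodyne step via Proposition \ref{shuffle prop}, whose side condition is vacuous there), the paper adjoins, for each missing $R'$, the representable $\Omega[R'']$ by a pushout of the \emph{bottom} horn $\Lambda^u[R'']\to\Omega[R'']$, $u$ being the composite root vertex of $R''$, and only then applies Proposition \ref{shuffle prop} to $R=f(R')$ with the side condition now satisfied. Your concluding claim that the whole map is a composite of pushouts of inner and top horns cannot be repaired: root faces such as $l_1\otimes T$ of the last shuffle (e.g.\ $\{(l_1,a),(l_1,b)\}$ in the example above, with $S=C_1$) are missing faces that occur as the bottom face of exactly one face of a shuffle, so some pushout of a bottom horn is unavoidable; this is consistent with the theorem asserting only a stable, not a covariant, anodyne extension. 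To fix the proposal you must insert, in the white-root case, the intermediate step adjoining the trees $R''$ via bottom horn fillings (ordered by the size of $R'$, so that $\Lambda^u[R'']$ already maps into the previously constructed subobject), and only afterwards invoke Proposition \ref{shuffle prop} for the faces containing $(l_1,r_T)$.
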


\begin{remark}
	The conditions on $S$ and $T$ ensure that the stated morphism is a normal monomorphism and we make the same assumptions following Erratum, \cite{dSet model hom op}. 
\end{remark}

\begin{remark}
The following proof applies equally if $S$ is a corolla or a tree with more than one vertex. 
If we consider the case where $S$ is linear, then $m=1$. 
\end{remark}

\begin{proof}

If $T=\eta$ the statement is equivalent to saying that the horn inclusion $\Lambda^{v} [S] \to \Omega[S]$ is a stable anodyne extension, which is true by definition.  Hence we assume that $T$ has at least one vertex. We denote by $r_S$ (respectively $r_T$) the root of $S$ (respectively $T$).

We fix a total order $R_1 \preceq R_2 \preceq ... \preceq R_N$ extending the right percolation partial order (see Definition \ref{shuffles}). 
Let $B_0=\Lambda^{v}[S] \otimes \Omega[T] \cup \Omega[S] \otimes \partial\Omega[T]$ and we define $B_i = B_{i-1} \cup \Omega[R_i]$. The assumptions on $T$ and $S$ imply that all maps $B_{i-1}\to B_i$ are monomorphisms. 

If $T$ has no leaves, then $R_1$ is $r_S \otimes T$ and $B_1=B_0$. In that case we will show that the inclusions $B_{i-1} \to B_{i}$ are stable anodyne extensions for all $i=2,...,N$. In the case $T$ has at least one leaf, we will show that the inclusions $B_{i-1} \to B_{i}$ are stable anodyne extensions for all $i=1,2,...,N$.

If we denote $A:=B_{i-1}\cap \Omega[R_i]$ then we have a pushout diagram
\begin{equation*}
\xymatrix {A \ar[d] \ar[r] & B_{i-1} \ar[d] \\ \Omega[R_i] \ar[r] & B_i.}
\end{equation*}
From this it follows that it is enough to show that $A \to \Omega[R_i]$ is a stable anodyne extension, for all $i$. 

We distinguish two cases. If the bottom vertex of $R_i$ is black, then it is clear that the assumptions of Proposition \ref{prop extension 1} are satisfied, so $A\to \Omega[R_i]$ is an inner anodyne extension.

To deal with the case when the bottom vertex of $R_i$ is white ($v\otimes r_T$), we introduce some notation. Let $R'_i$ be the maximal face of $R_i$ with the edge $(l_1, r_T)$ being the root. Another way of looking at this is that $R_i$ is obtained by grafting $R'_i, l_2\otimes T,..., l_m\otimes T$ on the corolla with the vertex $v\otimes r_T$, i.e.
\[ R_i=(v\otimes r_T) \circ (R'_i, l_2\otimes T,..., l_m\otimes T). \]
Let us consider the family 
\[
\mathcal{H} =\{ R' \mid R'\in\mathrm{Sub}(R'_i), \, \text{ root of } R' \text{ is } (l_1, r_T) \}
\]
 of all faces of $R_i'$ with the root $(l_1, r_T)$. For each such face $R'$, we can form a face $R=f(R')$ of $R_i$ by grafting $R', l_2\otimes T,..., l_m\otimes T$ on the corolla with the vertex $v\otimes r_T$, i.e.
\[ f(R')=(v\otimes r_T)\circ (R', l_2\otimes T,..., l_m\otimes T). \]
We also consider the family of all such trees: 
\[
\mathcal{G} = \{ f(R') \mid R'\in\mathrm{Sub}(R'_i), \, \text{ root of } R' \text{ is } (l_1, r_T) \}. 
\]

The idea is to proceed in two steps. In the first step we add to the filtration all missing faces $\partial_{(l_1, r_T)} R$, for $R\in \mathcal{G}$, and in the second step we add all missing faces  $R'\in \mathcal{H}$ and $R\in \mathcal{G}$.

\subsection*{Step 1.} Let us denote by $B'_{i-1}$ the union of $B_{i-1}$ with the representables of all $\partial_{(l_1, r_T)} R$, for $R\in \mathcal{G}$. 
We will show that the inclusion $B_{i-1} \to B'_{i-1}$ is a covariant anodyne extension. Let $K$ be the number of vertices of $R'_i$ and let us define inductively a filtration
$$B_{i-1}=C_0 \subseteq C_1 \subseteq .... \subseteq C_k \subseteq ... \subseteq C_K=B'_{i-1}$$
where $C_k$ is the union of $B_{i-1}$ with the representables of all $\partial_{(l_1, r_T)} R$, for all $R'\in \mathcal{H}$ with at most $k$ vertices.
For any tree $R'\in \mathcal{H}$ and $R=f(R')$, the inclusion 
\[C_{k-1} \cap \Omega[\partial_{(l_1, r_T)} R] \to \Omega[\partial_{(l_1, r_T)} R] \] 
satisfies the assumptions of Proposition \ref{shuffle prop}, so it is a covariant anodyne extension.  
Since we have a pushout diagram
\[\xymatrix {\coprod ( C_{k-1} \cap \Omega[\partial_{(l_1, r_T)} R]) \ar[d] \ar[r] & C_{k-1} \ar[d] \\ \coprod \Omega[\partial_{(l_1, r_T)} R] \ar[r] & C_k} \]
where the coproduct is taken over all faces $R\in \mathcal{G}$, we conclude that the inclusion $C_{k-1}\to C_k$ is a covariant anodyne extension for each $k$, so $B_{i-1} \to B_{i-1}'$ is one, too.


\subsection*{Step 2.} We next show that the inclusion $B'_{i-1}\to B_i$ is stable anodyne.
Let us define inductively a filtration \[B'_{i-1}=D_0 \subseteq D_1 \subseteq .... \subseteq D_k \subseteq ... \subseteq D_K=B_i\]
where $D_k$ is the union of $B'_{i-1}$ with the representables of all $R$, for all possible faces $R'$ with at most $k$ vertices.

For a missing face $R'\in \mathcal{H}$ (i.e. if $\Omega[R']\to \Omega[R_i]$ does not factor through $B'_{i-1}$), we consider the tree $R''$ obtained by grafting $R'$ on the leaf $l_1\otimes r_T$ of the corolla with the root $r_S\otimes r_T$ and the leaves (other than $(l_1, r_T)$) of the form $(l_j, x), x\in X, j\in \{2,...,m\}$, where $X$ is the $T$-top of $f(R')$ (see Definition \ref{def T-top}). Let us call $u$ the unique vertex of $R''$ attached to the root. The inclusion $\Lambda^u[R''] \to \Omega[R'']$ is a stable anodyne extension and $\Lambda^u[R'']$ factors through $D_{k-1}$ where $k$ is the number of vertices of $R'$.

If $\Omega[R']\to \Omega[R_i]$ does not factor through $B'_{i-1}$, then $\Omega[R'']\to \Omega[R_i]$ also does not factor through $B'_{i-1}$, so the inclusion 
\[A:=(D_{k-1} \cup \Omega[R''])\cap\Omega[ R]\] satisfies the assumptions of Proposition \ref{shuffle prop} and hence it is a covariant anodyne extension. 
Since we have a pushout diagram
\[\xymatrix {\coprod D_{k-1} \cap \Omega[ R] \ar[d] \ar[r] & D_{k-1} \ar[d] \\ \coprod \Omega[ R] \ar[r] & D_k} \]
where the coproduct is taken over all missing faces $R'\in\mathcal{H}$, we conclude that $D_{k-1}\to D_k$ is a stable anodyne extension for each $k$. Thus, $B'_{i-1}\to B_i$ is  a stable anodyne extension and the proof is complete.
\end{proof}

\begin{remark} \label{PushoutProductOper}
	Our method also applies to show Proposition 9.2. in \cite{inn Kan in dSet}. Let $T$ and $S$ be two trees and $e$ an inner edge of the tree $S$. If both $S$ and $T$ are open trees or one of them is linear, then the morphism
	$$\Lambda^e[S] \otimes \Omega[T] \cup \Omega[S] \otimes \partial\Omega[T] \to \Omega[S]\otimes \Omega[T]$$ is an inner anodyne extension.
	
	We use the filtration given by adding shuffles one by one following the left percolation poset. Let $v$ be the unique vertex of $S$ such that $e$ is the input of. $v$. For a fixed shuffle $R_i$ we define
	\[X=\{x \in T \mid v \otimes x \textrm{ is a vertex of } R_i\} \]
	and the extension set is then given by (inner elementary face maps)
	\[
		F=\{\partial_{(x, e)}\colon \partial_{(x, e)} P \to P \mid  P \textrm{ missing face }, x\in X\}.
	\]
\end{remark}

\end{document}